\newcommand{\matindex}[1]{\mbox{\scriptsize#1}}% Matrix index
\DeclareSymbolFontAlphabet{\mathbb}{AMSb}
\DeclareSymbolFontAlphabet{\mathbbl}{bbold}
\DeclareMathAlphabet\mathbfcal{OMS}{cmsy}{b}{n}
\newtheorem{intro_thm}{Theorem}
\newtheorem{theorem}{Theorem}[section]
\newtheorem*{theorem*}{Theorem}
\newtheorem{lemma}[theorem]{Lemma}
\newtheorem{proposition}[theorem]{Proposition}
\newtheorem{corollary}[theorem]{Corollary}
\theoremstyle{definition}
\newtheorem{definition}[theorem]{Definition}
\newtheorem{example}[theorem]{Example}
\newtheorem{remark}[theorem]{Remark}
\newtheorem{question}[theorem]{Question}
\newtheorem{notation}[theorem]{Notation}
\newtheorem*{open}{Open Problem}
\newcommand{\op}[1]{\operatorname{#1}}
\newcommand{\wt}{\mathbf{w}^{}_{P}}
\newcommand{\towerchi}{\underline{\bm\chi}}
\newcommand{\towerwt}{\mathbf{w}^{}_T}
\newcommand{\towerequiv}{\underset{\op{Tower}}{\displaystyle\equiv\joinrel\equiv}}
\newcommand{\fac}{\mathcal{F}}
\newcommand{\CC}{\mathbb{C}}
\newcommand{\QQ}{\mathbb{Q}}
\newcommand{\hook}[2]{\mathfrak{h}_{#2}^{#1}}
\newcommand{\qhook}[2]{\mathfrak{qh}_{#2}^{#1}}
\newcommand{\ghook}[3]{{}_#3\hook{#1}{#2}} % the indexed hook
\newcommand{\parA}{([n],\dots)}
\newcommand{\parB}{([1^n],\dots)}
\newcommand{\parC}{(\qhook{n}{k},\dots)}
\newcommand{\parD}{(\hook{n-1}{k},\dots,1,\dots)}
\begin{document}

\title{Coxeter factorizations with generalized Jucys-Murphy weights and Matrix Tree theorems for reflection groups}

\author{\Large Guillaume Chapuy,\ \Large Theo Douvropoulos\thanks{Both authors are supported by the European Research Council, grant ERC-2016- STG 716083 ``CombiTop''.}}

\newcommand{\Address}{{% additional braces for segregating \footnotesize
  \bigskip
  \footnotesize
	
  Guillaume~Chapuy, \textsc{IRIF, UMR CNRS 8243, Universit\'e Paris Diderot, Paris 7, France}\par\nopagebreak
  \textit{E-mail address:} \texttt{guillaume.chapuy@irif.fr}	
  
   Theo~Douvropoulos, \textsc{University of Massachusetts, Amherst, MA, USA}\par\nopagebreak
  \textit{E-mail address:} \texttt{douvropoulos@math.umass.edu}

}}

\date{}
\maketitle
\begin{abstract}
	We prove universal (case-free) formulas for the \emph{weighted} enumeration of factorizations of Coxeter elements into products of reflections valid in any well-generated reflection group $W$, in terms of the spectrum of an associated operator, the $W$-Laplacian. This covers in particular all finite Coxeter groups. The results of this paper include generalizations of the Matrix Tree and Matrix Forest theorems to reflection groups, and cover reduced (shortest length) as well as arbitrary length factorizations.
	
	Our formulas are relative to a choice of weighting system that consists of $n$ free scalar parameters and is defined in terms of a tower of parabolic subgroups.
	% -- or to the slightly less general choice of a linear ordering of vertices the Dynkin diagram.
	To study such systems we introduce (a class of) variants of the Jucys-Murphy elements for every group, from which we define a new notion of `tower equivalence' of virtual characters. A main technical point is to prove the tower equivalence between virtual characters naturally appearing in the problem, and exterior products of the reflection representation of $W$.

Finally we study how this $W$-Laplacian matrix we introduce can be used in other problems in Coxeter combinatorics; for instance, we explain how it defines analogues of trees for $W$ and how it relates them to Coxeter factorizations. Moreover, we build a parabolic recursion for the $W$-Laplacian that proves, without relying on the classification, new numerological identities between the Coxeter number of $W$ and those of its parabolic subgroups, and, when $W$ is a Weyl group, a new, explicit formula for the volume of the corresponding root zonotope.

\end{abstract}

\section{Introduction and main results}

More than a century after its genesis\footnote{In his 1847 paper, Kirchhoff was interested in the theory of electrical networks, whose fundamental laws he had introduced two years earlier, and the linear equations that govern them. Nonetheless, the main objects and ideas for the modern Matrix Tree theorem do indeed appear there; for a captivating telling of the story, see \cite{history_mt}.} \cite{kirchhoff} and despite its simplicity, Kirchoff's ``Matrix Tree'' theorem is still one of the most beautiful and remarkable enumeration formulas in mathematics. It expresses the weighted number of labeled trees on the vertex-set $\{1,2,...,N\}$, where each edge $\{i,j\}$ receives an arbitrary complex weight $\omega_{ij}$, as the determinant of (any first minor of) the so-called \emph{Laplacian matrix} constructed form these weights.
The influence of Kirchoff's theorem, relating spectral properties of the Laplacian to a combinatorial problem, has extended well beyond combinatorics, from mathematical physics to probability theory. 
In this work we will show how, for any given finite reflection group $W$, the spectrum of a natural ``$W$-Laplacian'' operator plays a key role in enumerative and geometric problems related to $W$.

When all weights are equal to one, Kirchoff's determinant evaluates to Cayley's famous $N^{N-2}$ formula that counts (unweighted) labeled trees of size $N$. Shortly after Cayley, Hurwitz knew \cite{Hurwitz} that this same number counts factorizations $\tau_1\cdots \tau_{N-1}=(1,2,\cdots ,N)$ of the increasing long cycle of the symmetric group $\mathfrak{S}_N$ into a product of $N-1$ transpositions $\tau_i$. 
There is a correspondence between factorisations and trees (easier to describe if one considers all long cycles simultaneously, rather than the increasing one, see\cite{Denes}, but also one that can be realized geometrically via the interpretation of trees as branched coverings of the sphere by itself \cite[Corol.~5.1.4]{LZ-graphs-on-surfaces}). In this way, Kirchoff's theorem can be viewed as a far-reaching generalization of Hurwitz's theorem, counting factorizations with weights.

But Cayley's formula has at least two other sorts of generalizations. The first one, which lies most naturally in the field of enumerative geometry and the study of Hurwitz numbers, considers coverings of the sphere by a surface of higher genus $g$. Combinatorially, this amounts to counting factorizations of arbitrary length $\tau_1\cdots \tau_{\ell}=(1,2,...,N)$ with $\ell=N-1+2g$. This was done by Jackson~\cite{Jackson} (see also~\cite{SSV} who further noted the geometric interpretation) in a paper pioneering the use of representation-theoretic techniques in enumeration. He obtained a remarkable product form for their generating function ($\mathcal{R}$ denotes the set of transpositions $\tau_i$ in $\mathfrak{S}_N$):
\begin{equation}
\sum_{\ell\geq 0}\#\Big\{(\tau_1,\dots,\tau_{\ell})\in\mathcal{R}^{\ell}:\tau_1\cdots\tau_{\ell}=(1,2,\dots,N)\Big\}\cdot \dfrac{t^{\ell}}{\ell!}=\dfrac{e^{t\binom{N}{2}}}{N!}\cdot \big(1-e^{-tN}\big)^{N-1}.\label{EQ: Jackson-formula}
\end{equation}
The first term in the $t=0$ expansion of this formula gives back the Cayley/Hurwitz formula.

Another generalization that has become very popular after the development of Coxeter combinatorics, is to replace the symmetric group $\mathfrak{S}_n$ by a (real or complex) reflection group $W$ of rank $n$.
There, analogues of transpositions and long cycles are respectively reflections and Coxeter elements (see Section~\ref{sec:JM}). The enumeration of minimal length reflection factorizations of a fixed Coxeter element $c$ was first carried out by Deligne \cite{Deligne,kluitmann-thesis} (crediting discussions with Tits and Zagier) for simply-laced Weyl groups, confirming a conjecture of Looijenga. The answers take a remarkable product form as 
\begin{align}\label{eq:introDeligne}
	\#\Big\{(\tau_1,\dots,\tau_{\ell})\in\mathcal{R}^{\ell}:\tau_1\cdots\tau_{\ell}=c\Big\}=	\frac{h^nn!}{|W|},
\end{align}
where  $h$ is the Coxeter number of $W$ (and equals the order of $c$), and $\mathcal{R}$ is the set of reflections in~$W$. This uniform description of the counts was not observed originally and might be best credited as the Arnol'd-Bessis-Chapoton formula\footnote{Looijenga's conjecture \cite{Looijenga}, slightly reinterpreted, was that the left hand side of \eqref{eq:introDeligne} equals the degree of a covering map onto the configuration space of $n$ points in the plane. Arnol'd \cite[Thm.~11]{arnold_ICM} was the first to notice that this degree had a uniform formula, given by the right hand side of \eqref{eq:introDeligne}.}. 
In the case $W=\mathfrak{S}_N$ one has $(h,n)=(N,N-1)$ and \eqref{eq:introDeligne} is nothing but Cayley's $N^{N-2}$ formula again! 

In the last few years, there has been significant evidence that these three directions of generalization (weighted enumeration, higher genus, arbitrary reflection groups) are compatible with each other. Burman and Zvonkine in \cite{BZ}, which was a major influence for this work, gave a striking formula combining the first and second direction (see also~\cite{AK}). If $\mathcal{C}$ denotes the class of long cycles in $\mathfrak{S}_N$ and $\mathcal{R}$ its set of transpositions, and if we consider a weight system $\mathbf{w}$ on $\mathcal{R}$ with $\binom{N}{2}$ scalar parameters $\bm\omega:=(\omega_{ij})$ (that is, given as $\mathbf{w}\big( (ij)\big):=\omega_{ij}$) the weighted analogue of Jackson's formula \eqref{EQ: Jackson-formula} becomes 

\begin{equation}
\sum_{(\tau_1,\tau_2,\dots,\tau_\ell,c) \in \mathcal{R}^\ell \times \mathcal{C}\atop \tau_1\tau_2\dots\tau_\ell =c,\ \ell\geq 0}\mathbf{w}(\tau_1)\cdots\mathbf{w}(\tau_{\ell})\cdot\dfrac{t^{\ell}}{\ell!}=\dfrac{e^{t\cdot \mathbf{w}(\mathcal{R})}}{n}\cdot \prod_{i=1}^{N-1}(1-e^{-t\lambda_i}),\label{EQ: Burman-Zvonkine}
\end{equation}
where $\mathbf{w}(\mathcal{R}):=\sum_{\tau\in\mathcal{R}}\mathbf{w}(\tau)$ and where the $\lambda_i:=\lambda_i(\bm\omega)$ are the nonzero eigenvalues of the (weighted) Laplacian matrix of the complete graph $K_N$. It is easy to see that the leading term of this expression readily recovers Kirchoff's theorem after the factorizations-trees correspondence, and that the unweighted case ($\omega_{ij}=1$) gives back Jackson's formula~\eqref{EQ: Jackson-formula} since all eigenvalues are then equal to~$N$. Shortly after Burman-Zvonkine, Stump and the first author \cite{CS} unified the second and third directions of generalization. For a reflection group $W$ of rank $n$ with set of reflections $\mathcal{R}$, and for any Coxeter element $c\in W$, they showed

\begin{equation}
\sum_{\ell\geq 0}\#\Big\{(\tau_1,\dots,\tau_{\ell})\in\mathcal{R}^{\ell}:\tau_1\cdots\tau_{\ell}=c\Big\}\cdot \dfrac{t^{\ell}}{\ell!}=\dfrac{e^{t\cdot|\mathcal{R}|}}{|W|}\cdot \big(1-e^{-th}\big)^{n},\label{EQ: CS-formula}
\end{equation}
which simultaneously generalizes~\eqref{EQ: Jackson-formula} and~\eqref{eq:introDeligne} (the latter being the leading term of its expansion). 

A striking common point becomes immediately evident in these generalizations; all formulas have an unexpectedly nice product form. Far beyond their apparent appeal however, they encode non-trivial representation theoretic properties (see \S~\ref{sec:Intro-2}) and combine seemingly disparate objects. In particular, the emergence of the Laplacian matrix and its spectrum in the Burman-Zvonkine formula \emph{after a character calculation} in \cite[Prop.~2.1]{BZ} was (to the authors and a priori) nothing less than astonishing.

One is thus led to several natural questions: can these results be put under a common roof, i.e. are they shadows of a more general universal formula? Is there an object that plays the role of the ``Laplacian'' for reflection groups $W$? If so, is there also a notion of ``$W$-trees" and does this new Laplacian still manage to relate them to factorizations? And, is there a conceptual \emph{explanation} for the common product forms?

In this paper we will see that the answers to these questions are very much related. In all cases the core object is indeed a new notion of a Laplacian matrix for reflection groups. Its spectrum renders enumerative formulas, simultaneously unifying the three generalizations discussed above, for factorizations (Thm.~\ref{thm:main}) as well as W-trees (Lemma~\ref{Lem:Burman M-T} and \S~\ref{sec: Trees vs Facns}). We give its definition below as it will bring together the new ideas in what follows. It can be readily seen (\S~\ref{subsec:Laplacians}) to reduce to the usual graph Laplacian when $W=\mathfrak{S}_N$. 

\begin{definition}[The $W$-Laplacian]\label{Defn:Intro-Laplacian}\ \newline
	Let $W$ be a complex reflection group of rank $n$, and equip its set of reflections $\mathcal{R}=(\tau_i)_{i=1}^{|\mathcal{R}|}$ with a weight system $\mathbf{w}$  given by $\mathbf{w}(\tau_i)=\omega_i$, for indeterminates $\bm\omega:=(\omega_i)_{i=1}^{|\mathcal{R}|}$. We define the (weighted) $W$-Laplacian matrix as 
$$\op{GL}(V)\ni L_W(\bm\omega):=\sum_{\tau\in\mathcal{R}}\mathbf{w}(\tau)\cdot \big(\mathbf{I}_n-\rho^{}_V(\tau)\big),$$ where $\mathbf{I}_n$ is the $n\times n$ identity matrix and $\rho^{}_V$ the reflection representation of $W$.
\end{definition}

In the rest of this introduction we present the main results of our paper. In particular,  Theorem~\ref{thm:main} in \S~\ref{sec:Intro-1} below gives our common generalization of the formulas~\eqref{EQ: Jackson-formula}, \eqref{eq:introDeligne}, \eqref{EQ: Burman-Zvonkine}, and~\eqref{EQ: CS-formula}. In \S~\ref{sec:Intro-2} we give an explanation of their product structure which comes down to a correspondence between a special virtual character of $W$ and the exterior powers of its reflection representation. To define this  correspondence we introduce Jucys-Murphy elements for $W$. Finally, in \S\ref{sec:Intro-3} we discuss a family of further applications of the $W$-Laplacian matrix in Coxeter combinatorics.

\subsection{Enumerative results}
\label{sec:Intro-1}

A first attempt to extend the Chapuy-Stump formula was done in \cite{DHR} where the authors assign weights to the reflections $\tau\in \mathcal{R}$ depending on the orbits of the fixed hyperplanes $V^{\tau}$. In this way however, at most two different weights can appear in an irreducible, well-generated group $W$ (corresponding to long and short roots in the Weyl case). On the other hand it is easy to see that a full generalization of the Burman-Zvonkine formula, where each reflection gets its own weight, is impossible (already for dihedral groups, see \S\ref{sec:dihedral}). We achieve a \emph{maximally good} (see \S\ref{sec: end: better version?}) intermediate version.

Our main enumerative result allows weights induced from a parabolic filtration of the set of reflections. To state it, we first set up some terminology refering to Section~\ref{sec:JM} for precise definitions. For a finite reflection group $W\subset GL(V)$ of rank $n$ (i.e. such that $\dim_\mathbb{C} V=n$),  we consider \emph{parabolic towers} of subgroups, of the form
\begin{align}\label{eq:towerintro}
T=(\{1\}=W_0 \subset W_1\subset \dots \subset W_n=W),
\end{align}
where each group $W_i$ is a parabolic subgroup of $W$.
%where each group in the tower is a parabolic subgroup of the next one 
%. 
%Before going deeper in the properties of these towers, let us state our main group-enumerative result.
We let $\mathcal{R}$ be the set of reflections of $W$ and $\mathcal{C}$ the set of its Coxeter elements, and we consider a weighting system in which each $\tau\in \mathcal{R}$ is assigned one of $n$ complex weights $\bm\omega:=(\omega_i)_{i=1}^n$ based only on the index $i$ such that $\tau\in W_{i}\setminus W_{i-1}$, 
\begin{align}\label{eq:weightintro}
	\mathbf{w}^{}_T(\tau) = \omega_i \in \mathbb{C} \mbox{ if }	\tau\in W_{i}\setminus W_{i-1}.
\end{align}
\begin{intro_thm}[Main combinatorial result]\label{thm:main}
	Let $W \in GL(V)$ be a well-generated complex reflection group of rank $n$ and Coxeter number $h$, and let 
%	$$T=(\{1\}=W_0 \subset W_1\subset \dots \subset W_n=W)$$ 
	$T$ be a parabolic tower as in~\eqref{eq:towerintro}.
	Given a corresponding weighting system $\mathbf{w}^{}_T$ as in~\eqref{eq:weightintro}, 
	consider the generating function 
	$$\fac_W^T(t,\bm\omega) := \sum_{\ell \geq 1 } \frac{t^\ell}{\ell!}
	\sum_{(\tau_1,\tau_2,\dots,\tau_\ell,c) \in \mathcal{R}^\ell \times \mathcal{C}\atop \tau_1\tau_2\dots\tau_\ell =c} \mathbf{w}^{}_T(\tau_1)\mathbf{w}^{}_T(\tau_2)\cdots\mathbf{w}^{}_T(\tau_\ell),$$
	%	\#_{weighted}\{\tau_1\tau_2\dots\tau_\ell =c\},$$
%	where $\#_{weighted}\{\tau_1\tau_2\dots\tau_\ell =c\}$ is the weighted number of tuples $(\tau_1\dots,\tau_\ell,c)$ where the $\tau_i$ are pseudoreflections and $c$ is a Coxeter element, such that $\tau_1\tau_2\dots\tau_\ell =c$.
	of weighted factorisations of Coxeter elements into reflections. Then we have that
	\begin{align}\label{eq:mainThm}
	\fac_W^T(t,\bm\omega) = \dfrac{e^{t\cdot \mathbf{w}^{}_T(\mathcal{R})}}{h} \prod_{i=1}^n (1-e^{-t\lambda_i}),
	\end{align}
where $\lambda_i=\lambda_i(\bm\omega)$ are the eigenvalues, with multiplicity, of the \emph{$W$-Laplacian} matrix $L_W^T(\bm\omega)$ of Defn.~\ref{Defn:Intro-Laplacian} (weighted by $\mathbf{w}^{}_T$) and where $\mathbf{w}^{}_T(\mathcal{R})=\sum_{\tau \in \mathcal{R}}\mathbf{w}^{}_T(\tau)$ is the total weight of all reflections.
\end{intro_thm}

When all weights are equal to $1$, all the $\lambda_i$ are equal to $h$ (Prop.~\ref{Prop: L_W has all eigenvalues =h}) and Theorem~\ref{thm:main} reduces to the Chapuy-Stump formula~\eqref{EQ: CS-formula}, and thus also recovers the Arnol'd-Bessis-Chapoton formula~\eqref{eq:introDeligne} (see Figure~\ref{Fig: poset of formulas}). In general, the $\lambda_i$ are always non-negative integer combinations of the weights $\omega_i$ and are completely determined by the Coxeter-theoretic data of the tower $T$ (see \S~\ref{sec:combinatorial}). Many combinatorial  corollaries of Theorem~\ref{thm:main} may arise this way; we describe a few possibilities in \S~\ref{Sec: end: W-Lapl-combinatorics}.

Still assuming arbitrary values for the $\omega_i$'s, extracting the leading coefficient in $t$ in Theorem~\ref{thm:main} shows that the (weighted) enumeration of  factorizations of minimal length takes the particularly nice form of a determinant. For the symmetric group $\mathfrak{S}_N$, this is an instance of Kirchoff's Matrix-Tree theorem, via the correspondence between labelled trees and factorizations mentioned above.
\begin{corollary}[$W$-Matrix-tree theorem with parabolic weights]\label{cor:reduced}
	Let $W$ be a well-generated complex reflection group, $T$ a parabolic tower, and assume the notations and hypotheses of Theorem~\ref{thm:main}. 
Then the weighted number of reduced reflection factorisations of all Coxeter elements is given by 
	\begin{align}\label{eq:reduced}
\sum_{(\tau_1,\tau_2,\dots,\tau_n,c) \in \mathcal{R}^n \times \mathcal{C}\atop \tau_1\tau_2\dots\tau_n =c} \mathbf{w}^{}_T(\tau_1)\mathbf{w}^{}_T(\tau_2)\cdots\mathbf{w}^{}_T(\tau_n)=
		\dfrac{n!}{h} \det L_W^T(\bm\omega),
	\end{align}
where $L^T_W$ is the $W$-Laplacian matrix of Defn.~\ref{Defn:Intro-Laplacian} (weighted by $\mathbf{w}^{}_T$).
\end{corollary}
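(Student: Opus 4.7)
The plan is to deduce Corollary~\ref{cor:reduced} from Theorem~\ref{thm:main} by extracting the coefficient of $t^n$ on both sides of~\eqref{eq:mainThm}. First I would observe that, in any well-generated complex reflection group of rank $n$, the reflection length of every Coxeter element is exactly $n$ (a standard fact in dual Coxeter combinatorics). Consequently the generating series $\fac_W^T(t,\bm\omega)$ has no contribution from any $\ell<n$, so its coefficient of $t^n$ equals $\frac{1}{n!}$ times the weighted sum on the left-hand side of~\eqref{eq:reduced}.

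Next I would expand the right-hand side of~\eqref{eq:mainThm} as a power series in $t$. Each factor $1-e^{-t\lambda_i}$ equals $t\lambda_i+O(t^2)$, so
$$\prod_{i=1}^n\bigl(1-e^{-t\lambda_i}\bigr)=t^n\prod_{i=1}^n\lambda_i+O(t^{n+1})=t^n\det L_W^T(\bm\omega)+O(t^{n+1}),$$
using that the product of the eigenvalues (with multiplicity) of $L_W^T(\bm\omega)$ is its determinant. The exponential factor $e^{t\cdot\mathbf{w}^{}_T(\mathcal{R})}$ contributes $1$ at lowest order in $t$, so the coefficient of $t^n$ on the right-hand side is $\frac{1}{h}\det L_W^T(\bm\omega)$. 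Equating the two expressions for the coefficient of $t^n$ and clearing the $\frac{1}{n!}$ factor yields the desired identity~\eqref{eq:reduced}.

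All of the work lies in Theorem~\ref{thm:main}; the corollary is essentially just its leading term. There is no substantive obstacle here, and the only non-formal input is the uniform statement that the reflection length of a Coxeter element equals the rank in a well-generated group, which I would cite from the literature on the dual braid monoid. I would also remark that the determinant on the right-hand side is a homogeneous polynomial of degree $n$ in the weight parameters $\bm\omega$, matching the left-hand side where each factorization contributes a product of $n$ weights, providing a consistency check.
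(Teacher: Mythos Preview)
Your proposal is correct and matches the paper's own argument, which simply says the corollary follows by ``extracting the leading coefficient in $t$ in Theorem~\ref{thm:main}''. One small remark: the fact that Coxeter elements have reflection length exactly $n$ is not strictly needed, since the coefficient of $t^n$ in $\sum_{\ell\geq 1}\frac{t^\ell}{\ell!}a_\ell$ is $\frac{a_n}{n!}$ regardless of the lower-order terms; your observation serves rather as a consistency check (and indeed follows from Theorem~\ref{thm:main} itself, since the right-hand side visibly has no terms of order below $n$).
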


\begin{figure}[h]
\center
\includegraphics[scale=1]{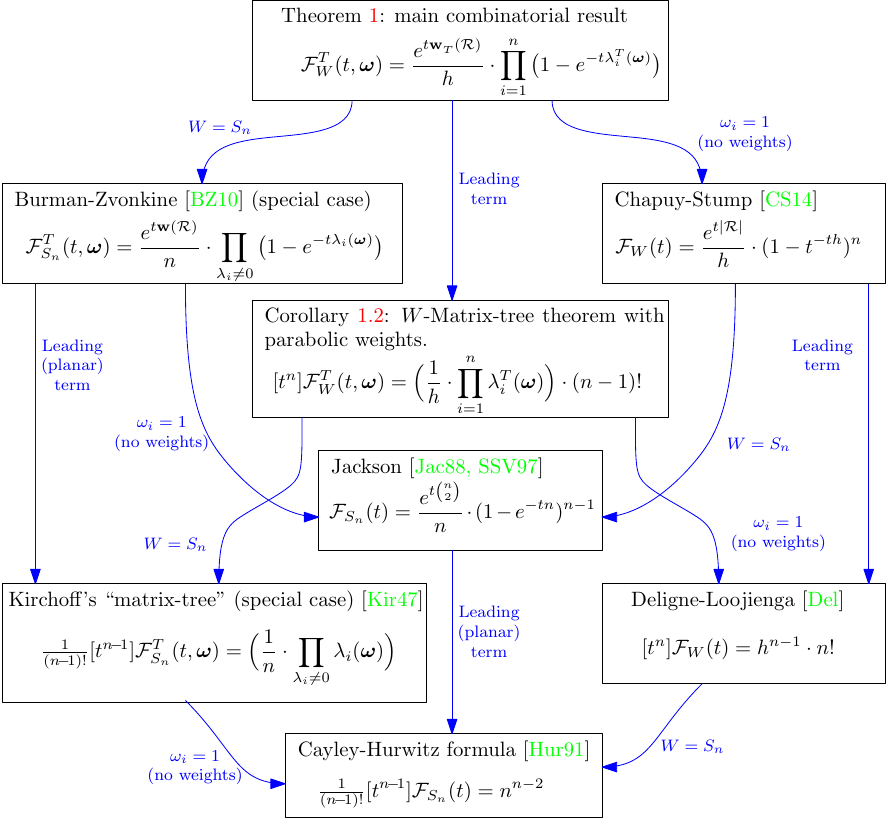}
\caption{The poset of implications from Theorem~\ref{thm:main} down to formulas  \eqref{EQ: Jackson-formula}, \eqref{eq:introDeligne}, \eqref{EQ: Burman-Zvonkine}, and \eqref{EQ: CS-formula}.}
\label{Fig: poset of formulas}
\end{figure}

\begin{remark}[Prefactors]\label{rem:prefactors}

	In the case $W=\mathfrak{S}_N$, the prefactor $\frac{n!}{h}=\frac{(N-1)!}{N}$ in formula~\eqref{eq:reduced} might come as a surprise in comparison to the usual formulation of Kirchoff's theorem. It is related to two things.
	First, the usual practice is to define the Laplacian matrix as an $N\times N$ matrix having a zero eigenvalue. Here our matrix is defined directly as a generically invertible $n\times n$ matrix but it is essentially the same object.
	In particular, the determinant of our Laplacian  differs from any first-minor of the usual Laplacian  by a factor of $\frac{1}{N}$, as is well known~\cite[Lem. 5.6.5 \& Cor. 5.6.6]{Stanley:EC2}.
	Secondly, in~\eqref{eq:reduced} we are summing over \emph{all} long cycles $c$, and not fixing a particular one, which amounts to considering edge-labelled spanning trees of $K_N$, hence the extra-factor $(N-1)!$. Corollary~\ref{cor:reduced} would \emph{not} be true, even for $\mathfrak{S}_N$, if we fixed a particular Coxeter element $c$ and divided by the size of the Coxeter class. 
	
\noindent In a different vein, the prefactor $n!$ might be justified as interpreting reorderings of the terms in the factorizations of \eqref{eq:reduced}. Indeed, among real $W$, in $\mathfrak{S}_N$, $B_n$, and $H_3$, the so called \emph{coincidental} types, such reorderings always result in Coxeter elements. The general case however is not understood combinatorially (see Question~\ref{Q: multiple of n!}).
\end{remark}

\begin{remark}[Stronger formulas]\label{rem:strongerFormulas}
	In the case of the symmetric group, $W=\mathfrak{S}_{N}$, formulas~\eqref{eq:reduced} and~\eqref{eq:mainThm}, respectively the Kirchoff and Burman-Zvonkine theorems, are true for \emph{any} choice of weights ${\bf w}(\tau), \tau \in \mathcal{R}$, without reference to any tower structure. 
	As we mentioned, the same is \emph{not} true for general reflection groups --  already for the cyclic and dihedral groups for which computations can be done explicitly.
	We will see in this paper that Theorem~\ref{thm:main} has in fact a representation-theoretic meaning, and this will also explain why a stronger statement holds for $\mathfrak{S}_N$. It will also show that for some particular groups (in particular $B_n$) formulas exist under more general weight systems, at the cost of defining the ``Laplacian'' matrix over representations other than the reflection representation $\rho^{}_V$. See Remark~\ref{rem:strongerStatements} below, and Section~\ref{Sec: product formulas} for these generalizations.
\end{remark}

\subsection{Representation theoretic intepretation}
\label{sec:Intro-2}

It was already known to Hurwitz \cite{hurwitz-1901-paper} that, in any finite group $G$, generating functions of factorizations as the ones in \eqref{EQ: Jackson-formula} and \eqref{EQ: CS-formula} can be computed via representation theoretic methods. They can always be expressed as finite sums of exponentials, a technique known as the \emph{Frobenius Lemma} (see \S~\ref{Section: Rep-theoretic techniques on enumeration}). This procedure applies both to weighted and unweighted enumeration, but in the latter case it is particularly difficult to compute those exponentials (save from certain well behaved cases such as in the Burman-Zvonkine proof of \eqref{EQ: Burman-Zvonkine}). In Section~\ref{Section: Rep-theoretic techniques on enumeration} we build a general framework (Corol.~\ref{Corol: FAC^(A,P)_(G,c) final}) for the calculation of such generating functions when the weights arise from a filtration of a conjugacy class by a tower of subgroups as in \eqref{eq:towerintro} and \eqref{eq:weightintro}. Our approach is via the Gelfand-Tsetlin decomposition of representations of $G$, induced by such a tower.

We were particularly influenced by the pioneering work of Okounkov and Vershik \cite{OV} who used the standard tower of subgroups $
\mathfrak{S}_1\subset \mathfrak{S}_2 \subset \dots\subset\mathfrak{S}_n
$ to inductively develop the whole representation theory of the symmetric group $\mathfrak{S}_n$. A key ingredient of their work was that this tower has  simple (multiplicity-free) branching and therefore its Gelfand-Tsetlin algebra is a maximal commutative subalgebra of $\mathfrak{S}_n$. 
For other reflection groups, similar towers have been considered but are usually less enjoyable, for example in $E_8$ there exists no tower of \emph{reflection} subgroups that gives simple branching (see \S\ref{Section End: Okounkov-Vershik approach}).
However, in this paper we will see that the class of parabolic towers \eqref{eq:towerintro} \emph{do have} deep representation theoretic properties and deserve further interest even if they fail to extend the Okounkov-Vershik approach in its entirety.

To apply the Frobenius Lemma in our context one needs to know the values of irreducible characters of $W$ over large commutative subalgebras of the group algebra $\CC[W]$. These subalgebras are generated by natural analogs of the Jucys-Murphy elements, that we study in detail (Sec.~\ref{Section: generalized JM elts}). Given a tower $T$ as in~\eqref{eq:towerintro}, we introduce elements $J_{T,1},\dots,J_{T,n}$ in $\mathbb{C}[W]$, where $J_{T,i}$ is the sum of all reflections of $W$ that belong to $W_i\setminus W_{i-1}$:
\begin{align}\label{eq:GJMintro}
	J_{T,i} := \sum_{\tau \in \mathcal{R} \atop \tau \in W_i, \tau \not\in W_{i-1}} \tau \quad  \in \mathbb{C}[W].
\end{align}
We call these $J_{T,i}$'s {\color{blue} \emph{generalized Jucys-Murphy elements}} and show that they have an integer spectrum and natural bounds given by the Coxeter-theoretic data of the tower $T$ (Prop.~\ref{Prop: bounds on spectrum of JM elements}).

	The cornerstone of the Okounkov-Vershik approach to the representation theory of $\mathfrak{S}_n$ is that, for the standard tower of $\mathfrak{S}_n$, the Jucys-Murphy elements \emph{generate} the Gelfand-Tsetlin algebra, which in particular contains the center of the group algebra $\CC[\mathfrak{S}_n]$ and separates all conjugacy classes, hence all characters. For a general tower as~\eqref{eq:towerintro}, this is no longer the case, which leads us to the following question: which characters does the algebra $\CC[\bm J_T]:=\langle J_{T,i}\rangle$ separate? And what does it \emph{mean} for characters to be separated, or not? We will not answer these questions here, but our work shows that they deserve further exploration.

We say that two (virtual) characters of the group $W$ are {\color{blue} \emph{tower equivalent}} if they are equal on the subalgebra $\CC[\bm J_T]$ of $\CC[W]$ generated by the generalized Jucys-Murphy elements~\eqref{eq:GJMintro}, for \emph{any} choice of the  parabolic tower $T$.
As we will see, Theorem~\ref{thm:main} is a direct consequence of the following result, which is the main representation-theoretic product of this paper and, along with the theory of Lie-like elements \S~\ref{sec:LieLike}, the {\it de facto} explanation for the nice product forms of the enumerative formulas discussed so far.
%	If $\chi \in \widehat{W}$ is an irreducible character of $W$, we define~\cite{todo} the Coxeter number of $\chi$ as 
%	$$c_\chi:= (\dim \chi)^{-1} \chi\left(\sum_{\tau \in \mathcal{R}} (1-\tau) \right).$$

\begin{intro_thm}[Tower equivalence, main result]\label{thm:equivalence}\ \newline
	Let $W \subset GL(V)$ be a well-generated complex reflection group, $T$ a parabolic tower, and $c\in W$ a Coxeter element. If $\widehat{W}$ denotes the collection of irreducible characters of $W$, we have that
	\begin{align}\label{eq:equivThm}
		\sum_{\chi\in\widehat{W}}\chi(c^{-1})\cdot \chi \towerequiv\sum_{k=0}^n(-1)^k\chi^{}_{\wedge^k(V)},
	\end{align}
	where $\towerequiv$ denotes tower equivalence, and where $\chi^{}_{\wedge^k V}$ is the character of the $k$-th exterior power of the reflection representation  $W\subset GL(V)$. 
\end{intro_thm}

This equivalence sheds a new light on the numerological formulas that characterize the world of complex reflection groups, and that are still not well understood.
It is interesting to recall here the history of the 50 years old Arnol'd-Bessis-Chapoton formula~\eqref{eq:introDeligne} stated earlier. It received a case-free proof only very recently, for Weyl groups by Michel~\cite{Michel} and for arbitrary real groups by the second author~\cite{Douvr}, with both papers proving in fact the character-theoretic identities, equivalent to the more general formula \eqref{EQ: CS-formula}, that Chapuy-Stump \cite{CS} had proved relying on the classification. As it turns out, both these proofs also involve, in different forms and in a rather ad-hoc way, the exterior products $\wedge^k V$ (more explicitly in the first, see \cite[Lemma~5]{Michel}). Adding to this, we show in Prop.~\ref{Prop: formal stronger tower equivalence} that Theorem~\ref{thm:equivalence} is actually equivalent to a seemingly stronger version that specifies tower equivalences for each exterior power $\wedge^k(V)$. It can therefore be seen more as a statement about the whole exterior algebra of $V$.

These two case-free proofs mentioned above still contain a lot of mystery and for example they do not explain -- at least that we know of -- our more general results. We hope that Theorem~\ref{thm:equivalence} clearly identifies the role of the exterior powers $\wedge^k(V)$, even if our proof does not fully explain it, as it relies on the classification. 
We also hope that the study of the generalized Jucys-Murphy elements can lead to interesting progress on these questions, see also Section~\ref{Section End: Okounkov-Vershik approach}.

\begin{remark}[Stronger statements]\label{rem:strongerStatements}
	In the case of the symmetric group, $W=A_n$, it is well known that the equivalence in Theorem~\ref{thm:equivalence} is an equality, as only ``hook'' characters do not vanish on Coxeter elements, and they coincide with exterior powers. As we will see, this directly implies that Theorem~\ref{thm:main} holds for \emph{any} choice of weights (Remark~\ref{rem:strongerFormulas}). It also turns out that for certain groups, the equivalence in~\eqref{eq:equivThm} can be strengthened (so that the virtual characters agree on larger subalgebras) by replacing the representation $\wedge^k(V)$ in the right-hand side by other representations $\wedge^k(V')$; in particular, it becomes an equality for the groups $B_n$. This leads to versions of Theorem~\ref{thm:main} with finer weights, but in which the Laplacian has to be defined over the less natural representations $V'$. See Section~\ref{Sec: product formulas}.
\end{remark}

\subsection{Other applications of the $W$-Laplacian in Coxeter combinatorics}
\label{sec:Intro-3}

The $W$-Laplacian matrix $L_W$ was constructed (Defn.~\ref{Defn:Intro-Laplacian}) as a sum of rank-1 operators. Burman et al. \cite{BPT} have developed a general framework for expressing the characteristic polynomials of such (and more general) matrices. In the case of the usual graph Laplacian $L_G$, this corresponds precisely to the so called Matrix-Forest theorem, where the coefficients of $\op{det}(x+L_G)$ count rooted forests in $G$. In the case of our $W$-Laplacian this can be interpreted as counting $\CC$-independent subsets of hyperplanes, which could be seen as $W$-trees and $W$-forests, weighted by a Grammian statistic (see Lemma~\ref{Lem:Burman M-T}). 

Building on the work of \cite{BPT} we prove, \emph{without relying on the classification}, a parabolic recursion (Prop.~\ref{prop:arr_rec}) for the $W$-Laplacian (that makes sense for arbitrary hyperplane arrangements and their Laplacians) which is the key ingredient for the following two theorems. The first (which however does not have a uniform proof as it also relies on Corol.~\ref{cor:reduced}) is an analogue of the Matrix Forest theorem for reflection groups, where however the combinatorial object that appears is reflection factorizations instead of trees. It turns out that the spectrum of the $W$-Laplacian encodes both \emph{arbitrary length} and \emph{reduced} Coxeter factorizations at the same time (see Remark~\ref{Rem: W-M-F thm}).

\begin{intro_thm}[$W$-Matrix-forest theorem, see Theorem~\ref{Thm: WMFT} page~\pageref{Thm: WMFT}]\label{thm:forestIntro}\ \newline
For a well generated complex reflection group $W$, its intersection lattice $\mathcal{L}_W$ and a parabolic tower $T$, the characteristic polynomial of its $T$-weighted $W$-Laplacian is given via
$$\op{det}\big(x+L_W^T(\bm\omega)\big)=\sum_{\tau_1\cdots\tau_{n-k}=c^{}_X\atop  X\in\mathcal{L}_W,\ k=\op{dim}(X)}|C^{}_{W_X}(c^{}_X)|\cdot\mathbf{w}^{}_T(\tau_1)\cdots\mathbf{w}_T^{}(\tau_{n-k})\cdot\dfrac{x^k}{(n-k)!},$$ where the sum is over all (reduced) reflection factorizations of \emph{any} Coxeter element $c^{}_X$ of \emph{any} parabolic subgroup $W_X$ of $W$ and $C_{W_X}(c_X)$ is the centralizer of $c_X$ in $W_X$.
\end{intro_thm}

The second important application (now with a completely uniform proof) of the parabolic recursion for $L_W$ is an identity between the Coxeter number of $W$ and those of its parabolic subgroups. For a reducible reflection group $W=W_1\times \cdots \times W_m$, we define the {\color{blue} \emph{multiset $\{h_i(W)\}$ of Coxeter numbers}} of $W$ as the collection of each Coxeter number $h_i$ of $W_i$ taken with multiplicity equal to $\op{rank}(W_i)$. For instance, we have $\{h_i(\mathfrak{S}_4\times B_3)\}=\{6,6,6,4,4,4\}$. The following is Theorem~\ref{Prop: identity among coxeter numbers} page~\pageref{Prop: identity among coxeter numbers}.

\begin{intro_thm}
For any irreducible complex reflection group $W$ with Coxeter number $h$, we have
$$(h+x)^n=\sum_{X\in\mathcal{L}_W}\prod_{i=1}^{\op{codim}(X)}h_i(W_X)\cdot x^{\op{dim}(X)},$$ where $\{ h_i(W_X)\}$ denotes the \emph{multiset} of Coxeter numbers of $W_X$ as described previously.
\end{intro_thm}

The Grammian statistic that appears in the  abstract Matrix Forest theorems of \cite{BPT} (see Lemma~\ref{Lem:Burman M-T}) behaves particularly well for Weyl groups $W$. It allows us in fact to calculate (also with a uniform proof) the volumes of root zonotopes via new formulas involving the Coxeter numbers of reflection subgroups of $W$, arguably simpler than existing works, see Section~\ref{Sec: root polytopes}. Still in the setting of Weyl groups, the $W$-Laplacian $L_W$ seems to be an excellent object to relate $W$-trees and Coxeter factorizations, generalizing the situation in type $A$. Remarkably, for $W$-trees, the role of the Coxeter number $h$ is in some sense replaced by the connection index of $W$, see Section~\ref{sec: Trees vs Facns}.

\subsection{Plan of the paper}
The paper is organized as follows. Section~\ref{Section: Rep-theoretic techniques on enumeration} recalls and extends some standard techniques of enumeration of factorizations in groups using representation theory, circling around ideas related to the ``Frobenius lemma''. In Section~\ref{sec:JM} we present the notions we need about complex reflection groups, we introduce parabolics towers,  generalized Jucys-Murphy elements, the $W$-Laplacian and its group-algebra counterpart. 
In Section~\ref{sec:exceptional} we use the previous results to show how our main theorem can be checked algorithmically provided one has access to the character table of the group, which we used in the (computer aided) proof for exceptional groups.
In Section~\ref{sec:combinatorial} we describe an explicit combinatorial rule to compute the eigenvalues of the $W$-Laplacian for a given group and parabolic tower.
In Section~\ref{sec:LieLike} we show the equivalence between Theorems~\ref{thm:main} and~\ref{thm:equivalence} via the concept of Lie-like elements due to Burman and Zvonkine. In Section~\ref{sec:infiniteFamilies} we prove the main result for the infinite families $G(r,1,n)$ and $G(r,r,n)$. This relies on an induction based on parabolic subgroups.
In Section~\ref{Section: Matrix-Forest}, we study the Laplacian for hyperplane arrangements, and use it to prove the $W$-matrix forest theorem, the identities for parabolic Coxeter numbers, and more. 
In Section~\ref{sec:further} we discuss further directions of research, natural questions, and comments. Finally, the appendices deal with certain applications of the Littlewood-Richardon rule which are used in the computations of Section~\ref{sec:infiniteFamilies}.

\section{Representation theoretic techniques on enumeration}
\label{Section: Rep-theoretic techniques on enumeration}

A well-trodden path in the enumeration of factorizations in groups is via the following lemma, that goes back to Frobenius and the beginnings of representation theory \cite{frobenius-collected-works}. It provides an answer in terms of group characters for factorization problems where each factor belongs to a prescribed conjugacy class (or a union of them). A proof  can be found in \cite[App.~A.1.3]{LZ-graphs-on-surfaces}.

\begin{lemma}[The Frobenius lemma] \label{Thm: Frobenius lemma}
Let $G$ be a finite group, $\mathcal{C}\subset G$ a conjugacy class, and $\mathcal{A}_i, i=1\dots \ell$, subsets of $G$ that are closed under conjugation. Then the number of factorizations $\sigma_1\cdots \sigma_{\ell}=g$, where $\sigma_i\in \mathcal{A}_i$ and  $g\in\mathcal{C}$, is given by$$
\dfrac{|\mathcal{C}|}{|G|}\sum_{\chi\in \widehat{G}}\chi(1)\cdot\chi(g^{-1})\cdot\dfrac{\chi(\mathcal{A}_1)}{\chi(1)}\cdots\dfrac{\chi(\mathcal{A}_{\ell})}{\chi(1)},$$ where $\widehat{G}$ denotes the (complete) set of irreducible characters of $G$, $g$ is any element of $\mathcal{C}$, and $\chi(\mathcal{A}_i):=\sum_{a\in \mathcal{A}_i}\chi(a)$.  
\end{lemma}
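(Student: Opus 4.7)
The plan is to interpret the enumeration problem inside the group algebra $\mathbb{C}[G]$ and to exploit the central idempotent decomposition induced by the irreducible characters. First I would introduce, for each set $\mathcal{A}_i$, its associated ``class sum'' $\widehat{\mathcal{A}}_i := \sum_{a\in \mathcal{A}_i} a \in \mathbb{C}[G]$. Since each $\mathcal{A}_i$ is stable under conjugation, each $\widehat{\mathcal{A}}_i$ lies in the center $Z(\mathbb{C}[G])$. Expanding the product, one sees directly that
$$\widehat{\mathcal{A}}_1 \widehat{\mathcal{A}}_2 \cdots \widehat{\mathcal{A}}_\ell \;=\; \sum_{g\in G} N(g)\cdot g,$$
where $N(g)$ is exactly the number of factorizations $\sigma_1\cdots \sigma_\ell = g$ with $\sigma_i \in \mathcal{A}_i$. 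Because the left-hand side is central, $N(g)$ depends only on the conjugacy class of $g$, so the quantity we want equals $|\mathcal{C}|\cdot N(g)$ for any fixed $g \in \mathcal{C}$.

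Next I would use that the center $Z(\mathbb{C}[G])$ is spanned by the primitive central idempotents
$$e_\chi \;=\; \frac{\chi(1)}{|G|}\sum_{h\in G} \chi(h^{-1})\, h, \qquad \chi \in \widehat{G},$$
which form a complete orthogonal system. A central element acts on the irreducible representation affording $\chi$ by a scalar, and for a class sum $\widehat{\mathcal{A}}$ the standard calculation (using Schur's lemma and taking traces) gives that this scalar equals $\chi(\mathcal{A})/\chi(1)$. Therefore
$$\widehat{\mathcal{A}}_1 \cdots \widehat{\mathcal{A}}_\ell \;=\; \sum_{\chi\in \widehat{G}} \prod_{i=1}^{\ell}\frac{\chi(\mathcal{A}_i)}{\chi(1)} \cdot e_\chi.$$

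Finally, I would read off the coefficient of a fixed element $g \in \mathcal{C}$. From the explicit formula for $e_\chi$, the coefficient of $g$ in $e_\chi$ is $\chi(1)\chi(g^{-1})/|G|$, so
$$N(g) \;=\; \frac{1}{|G|}\sum_{\chi \in \widehat{G}} \chi(1)\,\chi(g^{-1}) \prod_{i=1}^\ell \frac{\chi(\mathcal{A}_i)}{\chi(1)}.$$
Multiplying by $|\mathcal{C}|$ to sum over $g \in \mathcal{C}$ yields the stated formula.

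There is no real obstacle here: the argument is the standard one and fits on a single page. The only point requiring care is the identification of the scalar by which a class sum acts on an irreducible representation, which one verifies by taking the trace of both sides of $\rho(\widehat{\mathcal{A}})=\lambda\cdot \mathrm{Id}$. All other steps are formal manipulations inside $\mathbb{C}[G]$.
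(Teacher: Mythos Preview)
Your argument is correct and is the standard proof. The paper does not actually supply its own proof of this lemma; it simply states it and refers to \cite[App.~A.1.3]{LZ-graphs-on-surfaces}. That said, the paper does prove the closely related weighted version (Lemma~\ref{Thm: Frobenius a la Guillaume}), and its argument there is essentially the same as yours: one expands a product of class sums in $\mathbb{C}[G]$, extracts the coefficient of $g$ (the paper does this via the trace in the regular representation rather than via the explicit idempotent formula), and uses that central elements act by the scalar $\chi(\mathcal{A})/\chi(1)$ on each irreducible. So your proposal matches both the cited reference's approach and the paper's own technique for the weighted analogue.
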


When all factors $\sigma_i$ are chosen from the same (closed under conjugation) set $\mathcal{A}$ the Frobenius lemma gives a particularly nice answer to the enumeration problem. If we write $$\fac^{\mathcal{A}}_{G,g}(t):=\sum_{\ell\geq 0}\dfrac{t^{\ell}}{\ell!}\cdot\#\{(\sigma_1,\cdots,\sigma_{\ell},g)\in \mathcal{A}^{\ell}\times\mathcal{C}\ |\ \sigma_1\cdots\sigma_{\ell}=g\},$$ for the exponential generating function, then the previous theorem allows us to compute its coefficients and rewrite the sum as \begin{equation} \fac_{G,g}^{\mathcal{A}}(t)=\dfrac{|\mathcal{C}|}{|G|}\cdot \sum_{\chi\in\widehat{G}}\chi(1)\cdot \chi(g^{-1})\cdot \op{exp}\big( t\widetilde{\chi}(\mathcal{A})\big)\label{EQ: Frobenious without weights},\end{equation} where $\widetilde{\chi}(\mathcal{A})$ denotes the normalized trace $\frac{\chi(\mathcal{A})}{\chi(1)}$. This observation, that the generating function becomes a finite sum of exponentials, goes all the way back to Hurwitz in 1901 \cite[\S\,3:(15)]{hurwitz-1901-paper}.

\subsection{A Frobenius Lemma for weighted enumeration}
\label{Subsection: A frobenius Lemma for weighted enumeration}

We recall here a variant of the Frobenius lemma, considered already in \cite[Proof of Prop.~2.1]{BZ}, which allows for the weighted enumeration of factorizations. 

\begin{definition}\label{Defn: weight systems}
Let $\mathcal{A}$ be a subset\footnote{We will not need $\mathcal{A}$ to be closed under conjugation until \S~\ref{Section:weight function by group tower}.} of $G$ and $P=\{P_i\}_{(i\in I)}$ a set partition of $\mathcal{A}$. A {\it \color{blue} weight system} indexed by $P$ is an assignment $\wt:\mathcal{A}\rightarrow \CC,$ such that the weights $\wt(\sigma)\in\CC,\ \sigma\in\mathcal{A}$, depend only on the block $P_i$ that contains $\sigma$. We write $(\omega_i)_{i\in I}$ for the tuple of these weights (so that $\wt(\sigma)=\omega_i$ if $\sigma\in P_i$) and treat them as complex variables (but see also Rem.~\ref{Rem: scalar weights vs variables}).
\end{definition}

We are interested in the enumeration of factorizations $\sigma_1\cdots \sigma_{\ell}=g$ in a group $G$ where now each term $\sigma_i$ is weighted via a system $\wt$ as above. The exponential generating function of such weighted factorizations is then given by:

\begin{equation}\fac_{G,\mathcal{C}}^{\mathcal{A},P}(t,\bm\omega):=\sum_{\ell\geq 0}\dfrac{t^{\ell}}{\ell!}\cdot\sum_{(\sigma_1,\sigma_2,\dots,\sigma_\ell,g) \in \mathcal{A}^\ell \times \mathcal{C}\atop \sigma_1\sigma_2\dots\sigma_\ell =g} \wt(\sigma_1)\cdots\wt(\sigma_{\ell}).\label{EQ: Defn of FAC, weighted, general}\end{equation} The notation is meant to suggest that we want to treat $\fac$ as a function on the weights $\bm\omega:=(\omega_i)_{i\in I}$. Since its coefficients (as a formal series on $t^{\ell}/\ell!$) are sums of monomials in the $\omega_i$'s, we see in fact that $\fac\in\QQ[\bm\omega][[t]]$.

\begin{notation}\label{Not. chi(series)}
To simplify our formulas, we extend characters $\chi$ to the power series $\CC[G][[t]]$ by defining $\displaystyle\chi\big(\sum a_i\cdot t^i\big):=\sum \chi(a_i)\cdot t^i$, for any coefficients $a_i$ from the group algebra $\CC[G]$.
\end{notation}

\begin{lemma}\label{Thm: Frobenius a la Guillaume}
With notation as above, the enumeration of weighted factorizations \eqref{EQ: Defn of FAC, weighted, general} is given by
$$\fac_{G,\mathcal{C}}^{\mathcal{A},P}(t,\bm\omega)=\dfrac{|\mathcal{C}|}{|G|}\cdot \sum_{\chi\in \widehat{G}}\chi(g^{-1})\cdot \chi\big(e^{t\mathbf{A}(\bm\omega)}\big),$$ where $\mathbf{A}(\bm\omega):=\sum_{\sigma\in\mathcal{A}}\wt(\sigma)\cdot\sigma$ belongs to the group algebra $\CC[G]$ and $g$ is any choice from $\mathcal{C}$.
\end{lemma}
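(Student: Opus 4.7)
The plan is to start from the right-hand side and unfold it until we recover the left-hand side; the proof is essentially a bookkeeping exercise built on the second orthogonality relation for characters, together with the convention from Notation~\ref{Not. chi(series)}.

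First I would expand the exponential using Notation~\ref{Not. chi(series)}:
\[
\chi\big(e^{t\mathbf{A}(\bm\omega)}\big) \;=\; \sum_{\ell\geq 0} \frac{t^\ell}{\ell!}\,\chi\big(\mathbf{A}(\bm\omega)^\ell\big).
\]
By definition of $\mathbf{A}(\bm\omega)$ and linearity of $\chi$, one has
\[
\chi\big(\mathbf{A}(\bm\omega)^\ell\big) \;=\; \sum_{(\sigma_1,\dots,\sigma_\ell)\in \mathcal{A}^\ell} \wt(\sigma_1)\cdots \wt(\sigma_\ell)\,\chi(\sigma_1\cdots\sigma_\ell).
\]
I would then group $\ell$-tuples by the value $h=\sigma_1\cdots\sigma_\ell$ of their product and define the auxiliary quantity
\[
N_\ell(h) \;:=\; \sum_{(\sigma_1,\dots,\sigma_\ell)\in\mathcal{A}^\ell\atop \sigma_1\cdots\sigma_\ell = h} \wt(\sigma_1)\cdots\wt(\sigma_\ell),
\]
so that $\chi(\mathbf{A}(\bm\omega)^\ell) = \sum_{h\in G} N_\ell(h)\,\chi(h)$. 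Note that, tautologically, $\sum_{h\in\mathcal{C}} N_\ell(h)$ is exactly the coefficient of $t^\ell/\ell!$ in $\fac_{G,\mathcal{C}}^{\mathcal{A},P}(t,\bm\omega)$.

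The next step is to exchange the sums over $\chi$ and over $h$ and invoke the second orthogonality relation
\[
\sum_{\chi\in\widehat{G}} \chi(g^{-1})\,\chi(h) \;=\; \begin{cases} |C_G(g)| & \text{if } h\in \mathcal{C},\\ 0 & \text{otherwise,}\end{cases}
\]
together with the identity $|C_G(g)|=|G|/|\mathcal{C}|$. This collapses the character sum to
\[
\sum_{\chi\in\widehat{G}} \chi(g^{-1})\,\chi\big(\mathbf{A}(\bm\omega)^\ell\big) \;=\; \frac{|G|}{|\mathcal{C}|}\sum_{h\in\mathcal{C}} N_\ell(h).
\]

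Finally, I would multiply by $|\mathcal{C}|/|G|$ and sum over $\ell$ with the weights $t^\ell/\ell!$; the factors of $|G|/|\mathcal{C}|$ cancel, and the remaining double sum is precisely the generating function $\fac_{G,\mathcal{C}}^{\mathcal{A},P}(t,\bm\omega)$ of~\eqref{EQ: Defn of FAC, weighted, general}. I do not anticipate any serious obstacle: the only subtle points are the (convention-level) extension of $\chi$ to $\CC[G][[t]]$, and the absence of any hypothesis that the blocks $P_i$ of the partition $P$ be conjugacy-stable — which is irrelevant here since we only need $\mathcal{C}$ (not the $P_i$'s) to be closed under conjugation to apply column orthogonality. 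The argument in fact works verbatim even without any partition structure on $\mathcal{A}$, the weight system $\wt$ being used only through the group-algebra element $\mathbf{A}(\bm\omega)$.
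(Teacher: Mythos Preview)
Your proof is correct. It differs from the paper's in presentation, though the underlying content is the same. The paper works from the other direction: it rewrites the $\ell$-th coefficient of $\fac$ as $[\mathbbl{1}]\big(\mathbf{A}(\bm\omega)^\ell\cdot\mathbf{C}^{-1}\big)$ with $\mathbf{C}^{-1}=\sum_{g\in\mathcal{C}}g^{-1}$ central, then detects that coefficient via the trace in the regular representation, decomposes $\CC[G]\cong\bigoplus_{\chi}\chi(1)\cdot U_\chi$, and uses that $\mathbf{C}^{-1}$ acts on $U_\chi$ as the scalar $|\mathcal{C}|\chi(g^{-1})/\chi(1)$. Your route via column orthogonality is shorter and more self-contained; the paper's route is more structural and foreshadows the later use of central elements and their action on irreducibles (e.g.\ in \S\ref{Section:weight function by group tower}). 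Your closing remark that the argument needs no hypothesis on $\mathcal{A}$ or $P$ beyond $\mathcal{C}$ being a conjugacy class is correct and matches the generality of the paper's statement.
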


\begin{proof}
Notice that if $[g](\alpha)$ denotes the coefficient of $g$ in an element $\alpha$ of the group algebra $\CC[G]$, we can express the weighted enumerations as $$\sum_{(\sigma_1,\sigma_2,\dots,\sigma_\ell,g) \in \mathcal{A}^\ell \times \mathcal{C}\atop \sigma_1\sigma_2\dots\sigma_\ell =g} \wt(\sigma_1)\cdots\wt(\sigma_{\ell})=\sum_{g\in\mathcal{C}}\ [g]\big(\mathbf{A}(\bm\omega)^{\ell}\big)=\sum_{g\in\mathcal{C}}[\mathbbl{1}]\big(\mathbf{A}(\bm\omega)^{\ell}\cdot g^{-1}\big),$$ where in the second equality $\mathbbl{1}$ denotes the identity in $G$. The elements $\{g^{-1}\ |\ g\in\mathcal{C}\}$ sum up to the {\it central}  $\mathbf{C}^{-1}:=\sum_{g\in\mathcal{C}}g^{-1}$ and we rewrite the last term as $[\mathbbl{1}]\big(\mathbf{A}(\bm\omega)^{\ell}\cdot\mathbf{C}^{-1}\big)$.

One can detect the coefficient $[\mathbbl{1}](\alpha)$ as the trace of $\alpha$, normalized by $1/|G|$, under the regular representation $\CC[G]$. This is because in $\CC[G]$ all group elements apart from $\mathbbl{1}$ act as traceless permutation matrices (they have no fixed points). That is, we have
\begin{align*}[\mathbbl{1}]\big(\mathbf{A}(\bm\omega)^{\ell}\cdot\mathbf{C}^{-1}\big)&=\dfrac{1}{|G|}\cdot\op{Tr}_{\CC[G]}\big(\mathbf{A}(\bm\omega)^{\ell}\cdot\mathbf{C}^{-1}\big)\\
&=\dfrac{1}{|G|}\sum_{\chi\in\widehat{G}}\chi(1)\cdot \chi\big(\mathbf{A}(\bm\omega)^{\ell}\cdot\mathbf{C}^{-1}\big)=\dfrac{|\mathcal{C}|}{|G|}\sum_{\chi\in\widehat{G}}\chi(g^{-1})\cdot\chi\big(\mathbf{A}(\bm\omega)^{\ell}\big),
\end{align*} where for the second line we are decomposing the group algebra as $\CC[G]\cong\sum_{\chi\in\widehat{G}}\chi(1)\cdot U_{\chi}$ with $U_{\chi}$ the representation afforded by $\chi$, and use that the central element $\mathbf{C}^{-1}$ acts on $U_{\chi}$ as multiplication by the scalar $|\mathcal{C}|\cdot\chi(g^{-1})/\chi(1)$ (for any $g\in\mathcal{C}$).

The generating function \eqref{EQ: Defn of FAC, weighted, general} becomes then $$\fac_{G,\mathcal{C}}^{\mathcal{A},P}(t,\bm\omega):=\dfrac{|\mathcal{C}|}{|G|}\sum_{\ell\geq 0}\dfrac{t^{\ell}}{\ell!}\cdot\sum_{\chi \in \widehat{G}}\chi(g^{-1})\cdot \chi\big(\mathbf{A}(\bm\omega)^{\ell}\big),$$ which is exactly the statement of the lemma if we notice by Notation~\ref{Not. chi(series)} that we write \begin{equation}\vspace{-0.3cm}\chi\big(e^{t\mathbf{A}(\bm\omega)}\big)=\sum_{\ell\geq 0}\chi\big(\mathbf{A}(\bm\omega)^l\big)\cdot\dfrac{t^l}{l!}.\label{EQ: chi(exp)}\end{equation}\end{proof}

\begin{remark}\label{Rem: scalar weights vs variables}
We could have treated the $\omega_i$'s as indeterminates and considered the group ring $\CC[\bm\omega][G]$ instead. This would not affect the representation theoretic arguments (the group algebra $\CC[G]$ is \emph{already} split, see \cite[\S~7.4]{geck-pfeiffer}). This dual intepretation of the weights as complex scalars or indeterminates is valid throughout the rest of the paper too.
\end{remark}

For the actual calculation of the generating function the following form is perhaps more useful. We write $\op{Spec}_{\chi}\big(\mathbf{A}(\bm\omega)\big):=\{\theta_1(\bm \omega),\cdots,\theta_s(\bm\omega)\}$ for the {\it multiset} of (generalized) eigenvalues $\theta_j(\bm\omega)$ of $\mathbf{A}(\bm\omega)$ on the representation afforded by $\chi$ (so that $s=\op{dim}(\chi)$). Then \eqref{EQ: chi(exp)} becomes
$$
\chi(e^{t\mathbf{A}(\bm\omega)})=\sum_{\ell\geq 0}\dfrac{t^{\ell}}{\ell!}\cdot\sum_{\theta_j(\bm\omega)\in\op{Spec}_{\chi}(\mathbf{A}(\bm\omega))}\theta_j(\bm\omega)^{\ell}=\sum_{\theta_j(\bm\omega)\in\op{Spec}_{\chi}(\mathbf{A}(\bm\omega))}e^{t\theta_j(\bm\omega)},
$$which then immediately gives:

\begin{corollary}\label{Corol: FAC, summing over eigenvalues}
The generating function \eqref{EQ: Defn of FAC, weighted, general} for the weighted enumeration is as follows:
$$\fac_{G,\mathcal{C}}^{\mathcal{A},P}(t,\bm\omega)=\dfrac{|\mathcal{C}|}{|G|}\cdot\sum_{\chi\in\widehat{G}}\chi(g^{-1})\sum_{\theta_j(\bm\omega)\in\op{Spec}_{\chi}(\mathbf{A}(\bm\omega))}e^{t\theta_j(\bm\omega)}.$$
\end{corollary}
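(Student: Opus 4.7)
The statement is an immediate corollary of Lemma~\ref{Thm: Frobenius a la Guillaume}, so the plan is just to unpack $\chi(e^{t\mathbf{A}(\bm\omega)})$ spectrally. Start from the identity
$$\fac_{G,\mathcal{C}}^{\mathcal{A},P}(t,\bm\omega)=\dfrac{|\mathcal{C}|}{|G|}\sum_{\chi\in\widehat{G}}\chi(g^{-1})\cdot\chi\big(e^{t\mathbf{A}(\bm\omega)}\big)$$
given by the preceding lemma, and fix an irreducible character $\chi$ with underlying representation $U_\chi$. Writing $\rho_\chi:\mathbb{C}[G]\to\operatorname{End}(U_\chi)$ for the action, we have by Notation~\ref{Not. chi(series)} that
$$\chi\big(e^{t\mathbf{A}(\bm\omega)}\big)=\sum_{\ell\geq 0}\chi\big(\mathbf{A}(\bm\omega)^\ell\big)\cdot\frac{t^\ell}{\ell!}=\sum_{\ell\geq 0}\operatorname{Tr}_{U_\chi}\bigl(\rho_\chi(\mathbf{A}(\bm\omega))^\ell\bigr)\cdot\frac{t^\ell}{\ell!}.$$

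Next I would invoke the standard linear algebra fact that for any endomorphism $M$ of a finite-dimensional vector space, $\operatorname{Tr}(M^\ell)$ equals the sum of the $\ell$-th powers of its eigenvalues counted with algebraic multiplicity (this follows immediately from the Jordan decomposition, and does not require $M$ to be diagonalizable, which matters here since $\mathbf{A}(\bm\omega)$ need not be semisimple on $U_\chi$ for generic $\bm\omega$). Applying this to $M=\rho_\chi(\mathbf{A}(\bm\omega))$ whose multiset of generalized eigenvalues is exactly $\operatorname{Spec}_\chi(\mathbf{A}(\bm\omega))$, we get
$$\chi\big(\mathbf{A}(\bm\omega)^\ell\big)=\sum_{\lambda_j(\bm\omega)\in\operatorname{Spec}_\chi(\mathbf{A}(\bm\omega))}\lambda_j(\bm\omega)^\ell.$$

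Substituting and swapping the finite sum over eigenvalues with the absolutely convergent sum over $\ell$,
$$\chi\big(e^{t\mathbf{A}(\bm\omega)}\big)=\sum_{\lambda_j(\bm\omega)\in\operatorname{Spec}_\chi(\mathbf{A}(\bm\omega))}\sum_{\ell\geq 0}\frac{(t\lambda_j(\bm\omega))^\ell}{\ell!}=\sum_{\lambda_j(\bm\omega)\in\operatorname{Spec}_\chi(\mathbf{A}(\bm\omega))}e^{t\lambda_j(\bm\omega)}.$$
Plugging this back into Lemma~\ref{Thm: Frobenius a la Guillaume} yields the claimed formula.

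There is essentially no obstacle here; the only thing worth being careful about is the non-diagonalizability of $\mathbf{A}(\bm\omega)$ for general weights, which is handled by reading $\operatorname{Spec}_\chi$ as the multiset of generalized eigenvalues (as was already stipulated in the paragraph preceding the statement). One could alternatively argue by Zariski density, checking the identity on the open dense set of $\bm\omega$ for which $\rho_\chi(\mathbf{A}(\bm\omega))$ is diagonalizable and then extending by continuity of both sides as analytic functions of $(t,\bm\omega)$, but the Jordan-form argument is more direct and gives the result uniformly.
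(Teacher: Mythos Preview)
Your proof is correct and follows essentially the same route as the paper: expand $\chi(e^{t\mathbf{A}(\bm\omega)})$ via Notation~\ref{Not. chi(series)}, rewrite each $\chi(\mathbf{A}(\bm\omega)^\ell)$ as the power sum of the generalized eigenvalues, and resum into exponentials before plugging back into Lemma~\ref{Thm: Frobenius a la Guillaume}. Your explicit remark on handling non-diagonalizable $\mathbf{A}(\bm\omega)$ via Jordan form is a welcome clarification of what the paper leaves implicit in the word ``generalized''.
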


The above expression, where a finite sum of exponentials encodes the answer to an enumeration problem, should be seen as a weighted analog of \eqref{EQ: Frobenious without weights}. It reveals however a computational difficulty one faces when using this version of the Frobenius lemma, as opposed to Lemma~\ref{Thm: Frobenius lemma}:

 In general, the eigenvalues $\theta_j(\bm\omega)$ are algebraic functions on the parameters $(\omega_i)_{i\in I}$ and they might be very hard to calculate explicitly. In \S\ref{Section: weight function gives commutative subalgebra} and \S\ref{Section:weight function by group tower} we study special weight systems for which this is remedied (see also Section~\ref{sec:LieLike} and particularly Lemma~\ref{lemma:LieLikeEigenvalues}).

\subsection{When the partition $P$ determines a commutative subalgebra}
\label{Section: weight function gives commutative subalgebra}

The partition $P$ that indexes the weight system $\wt$ as in Defn.~\ref{Defn: weight systems} determines a tuple $(J_i)_{i\in I}$ of elements in the group algebra $\CC[G]$ as $J_i:=\sum_{\sigma\in P_i}\sigma$. These elements generate a subalgebra \begin{equation}\CC[\bm J]:=\CC[J_1,\cdots,J_r]\subset \CC[G],\label{EQ: C[J] subalgebra of C[G]}\end{equation} which in particular contains $\mathbf{A}(\bm\omega)=\sum_{i\in I}\omega_i J_i$ for any choice of variables $\omega_i$. 

When the algebra $\CC[\bm J]$ is commutative, then for any representation $\rho^{}_U:\CC[G]\rightarrow U$ and via use of the Schur decomposition theorem, its elements can be simultaneously triangularized. That is, there is some basis of $U$ for which all matrices $\rho^{}_U(A),\ A\in\CC[\bm J]$, are upper-triangular.  

In this setting the eigenvalues of an element $A$ acting on $U$ are given by linear functionals\footnote{These are usually called {\it weights} as in the theory of Lie algebras, but they will be left innominate here to avoid confusion with the weight systems $\wt$.} $\bm\theta :\CC[\bm J]\rightarrow \CC$ that record some fixed diagonal entry of the matrix $\rho^{}_U(A)$. We will denote by $\op{Spec}_U(\CC[\bm J])$ the {\it multiset} of such functionals associated to the representation $U$. They are in fact multiplicative (since the matrices $\rho^{}_U(A)$ are upper-triangular) and are therefore determined by their values on the generators $J_i$. 

That is, we write $\bm\theta=(\ell_i)_{i\in I}$ where $\ell_i:=\bm\theta(J_i)$ and if $A\in\CC[\bm J]$ is expressed as $A=f(\bm J)$ for some polynomial $f$, then the multiset $\op{Spec}_U(A)$ of its eigenvalues on $U$ is given by $$\op{Spec}_U(A)=\big\{f(\ell_1,\dots,\ell_r)\ |\ \bm\theta\in\op{Spec}_U(\CC[\bm J])\big\}.$$ In particular, the eigenvalues of $\mathbf{A}(\bm \omega)=\sum_{i\in I}\omega_iJ_i$ on $U$ are of the form $\bm\theta\big(\mathbf{A}(\bm \omega)\big)=\sum_{i\in I}\ell_i\omega_i$ for each of the $\op{dim}(U)$-many functionals $\bm\theta$.

This is a particularly nice setting for Corol.~\ref{Corol: FAC, summing over eigenvalues}, since now the eigenvalues $\theta_j(\bm\omega)$ are linear functions on the weights $\omega_i$ (instead of algebraic). To compute them, one would need in each irreducible representation $\chi\in\widehat{G}$ to simultaneously triangularize the matrices $\rho^{}_U(J_i)$. This might still be very difficult to do, or too slow for a computer experiment; in the next section we study even more special weight systems where this calculation comes down to the character theory of $G$.   

\subsection{When the partition $P$ is prescribed by a tower of subgroups}
\label{Section:weight function by group tower}

An easy way to guarantee the commutativity of the algebra $\CC[\bm J]$ of \eqref{EQ: C[J] subalgebra of C[G]} is to partition $\mathcal{A}$ according to a filtration of the group $G$ by a tower of subgroups. From this point on, we will further require that the underlying set $\mathcal{A}$ is closed under conjugation.

\begin{definition}\label{Defn: partition P via tower of subgroups}
Let $\mathcal{A}$ be a subset of some group $G$, closed under conjugation, and let $$T:=\big(\{\mathbbl{1}\}=G_0\leq G_1\leq\cdots\leq G_n=G\big)$$ denote a chain of subgroups. The tower $T$ induces a filtration of $\mathcal{A}$ into sets $\mathcal{A}_i:=\mathcal{A}\bigcap G_i$ which are closed under conjugation in $G_i$. We denote by $P_T$ the associated partition of the set $\mathcal{A}$ into parts $P_i:=\mathcal{A}_i\setminus\mathcal{A}_{i-1}$. If $\mathbbl{1}\notin\mathcal{A}$, we disregard the empty part $P_0$ and consider only $i=1,\dots,n$.
\end{definition}

In this setting the assumption of commutativity for the algebra $\CC[\bm J]$ of \eqref{EQ: C[J] subalgebra of C[G]} holds automatically. Indeed, since both $\mathcal{A}_i$ and $\mathcal{A}_{i-1}$ are closed under conjugation by elements of $G_{i-1}$, we will have by definition that $g^{-1}P_ig=P_i$, for any $g\in G_{i-1}$. Recalling the definition $J_i=\sum_{\sigma\in P_i}\sigma$, this means also that $g^{-1} J_ig=J_i$ for all $g\in P_j\subset G_{i-1}$ when $j<i$, which in turn forces $J_iJ_j=J_jJ_i$ for all $i,j$.

Far more than ensuring commutativity of the $J_i$'s, this setting allows us to relate the functionals $\bm\lambda$ of the previous section with character-theoretic information (and thus produce an even easier to compute version of Lemma~\ref{Thm: Frobenius a la Guillaume}):

\subsubsection{The Gelfand-Tsetlin decomposition}

We follow here the presentation as in \cite{OV} but see also \S~\ref{Section End: Okounkov-Vershik approach}. A source with a longer elaboration of the necessary representation theory is \cite{OV-book}.

Given a group $G$ and a tower $T$ of subgroups as in Defn.~\ref{Defn: partition P via tower of subgroups}, the {\color{blue} \it branching graph} or {\it Bratteli diagram} of $G$ with respect to $T$ is a directed multi-graph defined by the following data. Its vertex set is indexed by the (disjoint union of the) irreducible characters of the groups $G_i$. Two such vertices $\psi\in\widehat{G_{i-1}}$ and $\chi\in\widehat{G_i}$ are connected by $m_{\psi,\chi}$ directed edges from $\psi$ to $\chi$ where $$m_{\psi,\chi}=\big(\psi,\big\downarrow^{G_i}_{G_{i-1}}\chi\big)_{G_{i-1}}$$ denotes the multiplicity by which $\psi$ appears in the restriction of the character $\chi$ on $G_{i-1}$. The notation $\psi\nearrow\chi$ indicates that $\psi$ and $\chi$ are characters of {\it consecutive} groups $G_{i-1}$ and $G_i$ in the tower $T$, and that the multiplicity $m_{\psi,\chi}$ is positive.

We can decompose the representation $\rho_{\chi}:G\rightarrow\op{GL}(V_{\chi})$ associated to a character $\chi\in\widehat{G_i}$ as \begin{equation}V_{\chi}=\bigoplus_{\psi\nearrow\chi}m_{\psi,\chi}V_{\psi},\label{Eq: decompose chi into isotypic components}\end{equation} where $m_{\psi,\chi}V_{\psi}$ denotes the $\psi$-isotypic component of $V_{\chi}$. This decomposition is canonical in the sense that there is a uniquely determined central projection operator $$E_{\psi}:=\dfrac{\psi(1)}{|G|}\sum_{g\in G}\overline{\psi}(g)\rho_{\chi}(g)\ \in\op{End}(V_{\chi}),$$ such that $m_{\psi,\chi}V_{\psi}$ may be defined as its image $E_{\psi}(V_{\chi})$.

\begin{notation}\label{Not: chi restricted down T}
We denote by $\towerchi$ a tuple of characters $(\chi_0,\chi_1,\cdots,\chi_n)$ with $\chi_i\in\widehat{G_i}$ and we write $$\op{Res}^{}_T(\chi):=\{\towerchi:\ \chi_i\nearrow \chi_{i+1}\text{ and }\chi_n=\chi\},$$ for the set (ignoring multiplicities) of chains $\towerchi$ that may appear as we restrict $\chi$ down the tower $T$.
\end{notation}

The canonical projection into isotypic components as in \eqref{Eq: decompose chi into isotypic components} can be used to construct a decomposition of $V_{\chi}$ into spaces $V_{\towerchi}$ indexed by the chains $\towerchi\in\op{Res}^{}_T(\chi)$ as defined above. Start by writing $V_{\towerchi}(n):=V_{\chi}$ and inductively define $V_{\towerchi}(i)$ as the $\chi_i$-isotypic component of $V_{\towerchi}(i+1)$. At the last level, the spaces $V_{\towerchi}:=V_{\towerchi}(0)$ satisfy \begin{equation}V_{\chi}=\bigoplus_{\towerchi\in\op{Res}_T^{}(\chi)}V_{\towerchi}.\label{Eq: Gelfand-Tsetlin decomposition}\end{equation}

We call the expression above the {\color{blue}\it Gelfand-Tsetlin decomposition} of $V_{\chi}$ with respect to the tower $T$. Notice that at each step $i$ of the inductive construction the multiplicity of $V_{\chi_{n-i}}$ in $V_{\towerchi}(n-i)$ is given by the product $\prod_{j=n-i}^{n-1}m_{\chi_j,\chi_{j+1}}$. That is, if we define \begin{equation}\op{mult}(\towerchi):=\prod_{i=1}^n m_{\chi_{i-1},\chi_{i}}=\prod_{i=1}^n\big( \chi_{i-1},\big\downarrow^{G_i}_{G_{i-1}}\chi_i\big)_{G_{i-1}},\label{Eq: mult(towerchi)}\end{equation} and since we have $\op{dim}(V_{\chi_0})=1$ (because $G_0=\{\mathbbl{1}\}$), we get that $\op{dim}(V_{\towerchi})=\op{mult}(\towerchi)$. In more combinatorial terms, \eqref{Eq: mult(towerchi)} counts the number of chains equal to $\towerchi$ in the branching graph for $T$. 

The significance of the Gelfand-Tsetlin decomposition \eqref{Eq: Gelfand-Tsetlin decomposition} in our setting is that it provides simultaneous eigenspaces for the elements $J_i:=\sum_{\sigma\in P_i}\sigma$. Indeed, consider in the group algebra $\CC[G]$ the partial sums $\mathbf{A}_i:=J_1+\cdots+J_i$ so that $J_i=\mathbf{A}_i-\mathbf{A}_{i-1}$. Each $\mathbf{A}_i$ is a central element of $\CC[G_i]$ and therefore acts as scalar multiplication by $\widetilde{\chi}_{i}(\mathcal{A}_i)$ on each space $V_{\towerchi}(i)$. 

Finally, we have by construction $V_{\towerchi}\subset V_{\towerchi}(i)$ for all $i$, which means that \eqref{Eq: Gelfand-Tsetlin decomposition} gives a decomposition of $V_{\chi}$ in simultaneous eigenspaces of the $\mathbf{A}_i$ and thus by definition, also of the $J_i$. In particular, the elements $J_i$ are simultaneously \emph{diagonalizable}. Each $J_i$ acts as scalar multiplication by $\widetilde{\chi}_i(\mathcal{A}_i)-\widetilde{\chi}_{i-1}(\mathcal{A}_{i-1})$ on the spaces $V_{\towerchi}$ (these are the numbers $\ell_i$ of \S\ref{Section: weight function gives commutative subalgebra}) and since $\bm A(\bm\omega):=\sum_{i=1}^n\omega_iJ_i$, we have completely determined its spectrum over $V_{\chi}$:

\begin{equation}
\op{Spec}_{\chi}\big(\bm A(\bm\omega)\big)=\Bigg\{\sum_{i=1}^n\omega_i\cdot\Big(\widetilde{\chi}_i(\mathcal{A}_i)-\widetilde{\chi}_{i-1}(\mathcal{A}_{i-1})\Big)\ \text{with multiplicity}\ \op{mult}(\towerchi)\ |\ \towerchi\in\op{Res}_T^{}(\chi)\Bigg\}
\label{Eq: Spec_chi(A(w)) description}\end{equation}

\noindent After Corol.~\ref{Corol: FAC, summing over eigenvalues}, this immediately gives the easiest-to-compute version of Lemma~\ref{Thm: Frobenius a la Guillaume}:

\begin{corollary}\label{Corol: FAC^(A,P)_(G,c) final}
If the partition $P$ is induced by a tower of groups $T$ as in Defn.~\ref{Defn: partition P via tower of subgroups} and $\mathcal{A}$ is closed under conjugation, then with Notation~\ref{Not: chi restricted down T} the enumeration of factorizations \eqref{EQ: Defn of FAC, weighted, general} is given by:
$$\fac_{G,\mathcal{C}}^{\mathcal{A},P}(t,\bm\omega)=\dfrac{|\mathcal{C}|}{|G|}\cdot\sum_{\chi\in\widehat{G}}\chi(g^{-1})\sum_{\towerchi\in \op{Res}^{}_T( \chi)}\op{mult}(\towerchi)\cdot\op{exp}\Big( t\cdot\sum_{i=1}^n\big(\widetilde{\chi}_i(\mathcal{A}_i)-\widetilde{\chi}_{i-1}(\mathcal{A}_{i-1})\big)\cdot\omega_i\Big).$$
\end{corollary}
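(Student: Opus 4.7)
The statement is essentially a packaging of the machinery assembled in the preceding paragraphs, so my plan is to trace through that assembly in the right order and check that all the pieces fit.

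First, I would invoke Corollary~\ref{Corol: FAC, summing over eigenvalues}, which already gives
$$\fac_{G,\mathcal{C}}^{\mathcal{A},P}(t,\bm\omega)=\frac{|\mathcal{C}|}{|G|}\sum_{\chi\in\widehat{G}}\chi(g^{-1})\sum_{\lambda\in\op{Spec}_\chi(\mathbf{A}(\bm\omega))}e^{t\lambda},$$
with eigenvalues counted with multiplicity. The corollary will then follow from the explicit spectral description \eqref{Eq: Spec_chi(A(w)) description} of $\mathbf{A}(\bm\omega)=\sum_i\omega_i J_i$ acting on each irreducible $V_\chi$.

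The core step is to justify \eqref{Eq: Spec_chi(A(w)) description} for the tower-induced partition. The hypothesis that $\mathcal{A}$ is closed under $G$-conjugation forces each $\mathcal{A}_i=\mathcal{A}\cap G_i$ to be closed under $G_i$-conjugation, so the partial sums $\mathbf{A}_i:=J_1+\cdots+J_i=\sum_{\sigma\in\mathcal{A}_i}\sigma$ are central in $\CC[G_i]$. In particular the $J_i=\mathbf{A}_i-\mathbf{A}_{i-1}$ pairwise commute (as remarked in \S\ref{Section:weight function by group tower}), so we are in the setting of \S\ref{Section: weight function gives commutative subalgebra}. I would then apply the Gelfand-Tsetlin decomposition
$$V_\chi=\bigoplus_{\towerchi\in\op{Res}_T(\chi)}V_{\towerchi}$$
of \eqref{Eq: Gelfand-Tsetlin decomposition}. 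By Schur's lemma, the central element $\mathbf{A}_i\in\CC[G_i]$ acts on the $\chi_i$-isotypic component $V_{\towerchi}(i)$ of $V_\chi$ (viewed as a $G_i$-representation) by the scalar $\widetilde{\chi}_i(\mathcal{A}_i)$. Since $V_{\towerchi}\subseteq V_{\towerchi}(i)$ for every $i$, this scalar action restricts; hence $J_i=\mathbf{A}_i-\mathbf{A}_{i-1}$ acts on $V_{\towerchi}$ by $\widetilde{\chi}_i(\mathcal{A}_i)-\widetilde{\chi}_{i-1}(\mathcal{A}_{i-1})$, and $\mathbf{A}(\bm\omega)$ acts by
$$\sum_{i=1}^n\omega_i\bigl(\widetilde{\chi}_i(\mathcal{A}_i)-\widetilde{\chi}_{i-1}(\mathcal{A}_{i-1})\bigr).$$
The multiplicity of this eigenvalue is $\dim V_{\towerchi}=\op{mult}(\towerchi)$ by the counting in \eqref{Eq: mult(towerchi)} together with $\dim V_{\chi_0}=1$.

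Finally, I would substitute this description of $\op{Spec}_\chi(\mathbf{A}(\bm\omega))$ (with multiplicities) into the formula from Corollary~\ref{Corol: FAC, summing over eigenvalues}; the factor $e^{t\lambda}$ becomes exactly the exponential in the claim, and the multiplicities $\op{mult}(\towerchi)$ appear as stated. There is no serious obstacle: the only thing to double-check is that the passage from "$\mathcal{A}$ is closed under $G$-conjugation" to "$\mathcal{A}_i$ is closed under $G_i$-conjugation" is automatic, and that the $G_i$-structure (rather than $G$-structure) is what matters for $\mathbf{A}_i$ being central in $\CC[G_i]$ and for the isotypic decomposition at level $i$.
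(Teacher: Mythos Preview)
Your proposal is correct and follows exactly the same route as the paper: invoke Corollary~\ref{Corol: FAC, summing over eigenvalues}, then plug in the spectral description~\eqref{Eq: Spec_chi(A(w)) description} of $\mathbf{A}(\bm\omega)$ obtained from the Gelfand--Tsetlin decomposition. The paper in fact presents the corollary as an immediate consequence of these two ingredients, and your write-up simply makes the intermediate steps (centrality of $\mathbf{A}_i$, scalar action on isotypic pieces, dimension count for $V_{\towerchi}$) explicit.
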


\begin{remark}
This general formula can easily be used via a computer for arbitrary groups $G$ and towers $T$ given (more or less) complete access to the character tables. We hope the reader might enjoy experimenting in families of groups where such tower stratifications are natural (for instance $\op{GL}_n(F_q)$ seems a great first							 candidate after the work of \cite{LRS}).
\end{remark}

%\newpage 
\section{Reflection groups, the $W$-Laplacian, and tower equivalence}
\label{sec:JM}

After discussing in the previous section a general framework for enumerating factorizations in groups, we go on now to introduce our particular objects of study. These are the complex reflection groups, their Coxeter elements and their reflection factorizations. The reader may consult the classical references \cite{HUM,kane-book-reflection-groups,broue-book-braid-groups,lehrer-taylor-book} for any material that is not provided here.

A finite subgroup $W\leq GL(V)$ acting on some space $V\cong\CC^n$ is called a {\color{blue}\it complex reflection group} of rank $n$ if it is generated by {\it unitary reflections}. These are ($\CC$-linear) maps $t$ whose fixed spaces are hyperplanes; that is, $\op{codim}(V^t)=1$. When there is no non-trivial linear subspace of $V$ stabilized by the action of the group, we say that $W$ is {\it irreducible}. 

Shephard and Todd \cite{shephard-and-todd} classified irreducible complex reflection groups into an infinite 3-parameter family $G(r,p,n)$ and 34 exceptional cases $G_4$ to $G_{37}$. Any such group or rank $n$ can always be generated by either $n$ or $n+1$ reflections (though there is no conceptual explanation of this fact). The groups in the first category are called {\color{blue}\it well generated} and they are generally better understood; in particular they share many properties with the subclass of real reflection groups.

\subsubsection*{Coxeter elements as Springer regular elements}

In the real case, there is indeed a geometrically defined generating set of $n$ reflections, called the simple generators, whose combinatorics determine the group. The product of these (in any order) is called a Coxeter element and is denoted by $c$, after the famous geometer who studied its properties. In particular, Coxeter observed \cite[Thm.~11,~16]{coxeter-annals} that all possible elements $c$ are conjugate and that their order $h$ determines the number $N$ of reflections of the group, via the formula $hn=2N$.
Steinberg \cite{steinberg-finite-reflection-groups} gave a uniform proof of this formula, by constructing a plane on which $c$ acts as a rotation of $2\pi /h$ radians. The key property of this so called {\it Coxeter plane} was that it intersects all reflection hyperplanes $H$ transversally. This can be rephrased by saying that in the complexified ambient space, the $e^{\pm 2\pi i/h}$-eigenvectors of $c$ are not contained in any $H$. 

In the general case (real or complex), a vector $v$ that lies outside any reflection hyperplane is called {\it \color{blue} regular} and an element of the group will be called a {\it \color{blue} $\zeta$-regular} element if it has a regular $\zeta$-eigenvector \cite{springer-regular-elements}. As it happens, all $\zeta$-regular elements are conjugate and their order equals the order of $\zeta$ \cite[Corol.~11.25]{lehrer-taylor-book}. If $\mathcal{R}$ and $\mathcal{R}^*$ respectively denote the set of reflections and reflecting hyperplanes of $W$, the number \begin{equation}h:=\dfrac{|\mathcal{R}|+|\mathcal{R^*}|}{n} \label{Eq: Coxeter number h} \end{equation} is an integer (see Prop.~\ref{Prop: coxeter numbers are integers}) and we call it the {\it \color{blue} Coxeter number} of $W$. The following definition is thus meant to generalize the properties of $c$ in real groups (where $|\mathcal{R}|=|\mathcal{R^*}|=N$ and $h=2N/n$).

\begin{definition}\label{Defn: Coxeter element}
If $h$ is the Coxeter number of a complex reflection group $W$ as above, we define a {\it \color{blue} Coxeter element} $c$ of $W$ to be a $e^{2\pi i/h}$-regular element. It turns out that the well-generated groups are precisely those that have Coxeter elements\footnote{This is easy to see from \cite[Prop.~4.2]{Bessis-Zariski} and known properties of regular numbers.}.
\end{definition}

It is a standard property of the invariant theory of reflection groups \cite[Thm.~11.24:(iii)]{lehrer-taylor-book} that for an irreducible such $W$, the centralizer of a Coxeter element $c\in W$ is the cyclic group $\langle c \rangle$. Since, as we mentioned, the order of $c$ is $h$, this implies the following useful formula \begin{equation}|\mathcal{C}|=\dfrac{|W|}{h},\label{EQ: |C|=|W|/h}\end{equation} for the size of the conjugacy class $\mathcal{C}$ of $c$. We might call it the \emph{Coxeter class} in what follows.

\begin{remark}\label{Rem: general defn Cox elts}
Notice that with the previous Definition~\ref{Defn: Coxeter element}, the inverse $c^{-1}$ of a Coxeter element $c$ is not \emph{always} a Coxeter element itself. This will happen when $c$ does not have the  complex number $e^{-2\pi i/h}$ as an eigenvalue. For real reflection groups $W$ this is never an issue, since we always have that $g$ and $g^{-1}$ are conjugate for any $g\in W$.

There is a slightly more general definition of Coxeter elements \cite{RRS} where one considers all $\zeta$-regular elements for any primitive $h$-th root of unity $\zeta$. These extra elements can be derived from the Coxeter elements defined above via certain automorphisms of $W$ that preserve its set of reflections. All our theorems hold in this case as well, without extra work, because of this property; we briefly return to it in Rem.~\ref{Remark: refl autom. and gen. Cox elts}.
\end{remark}

\subsection{Filtration by a parabolic tower} 

Almost as much as in the real case, a large part of the theory of complex reflection groups is affected by the combinatorics of the arrangement $\mathcal{A}_W$ of reflection hyperplanes of $W$. We write $\mathcal{L}_W$ for its {\it \color{blue}intersection lattice}, whose elements -arbitrary intersections of the hyperplanes in $\mathcal{A}_W$- are called {\it \color{blue} flats} and are denoted by $X\in\mathcal{L}_W$. 

The pointwise stabilizer $W_S$ of any subset $S\subset V$ will be called a {\it \color{blue} parabolic subgroup}. It is a theorem of Steinberg \cite[\S~4.2]{broue-book-braid-groups} that $W_S$ is also a reflection group, generated by those reflections which fix $S$. This implies that any parabolic subgroup is of the form $W_X$ for some flat $X\in\mathcal{L}_W$, and thus that the poset of parabolic subgroups of $W$ is isomorphic to the lattice of flats $\mathcal{L}_W$.

In Section~\ref{Section: Rep-theoretic techniques on enumeration} we studied a general approach towards enumerating weighted factorizations of group elements. We will apply these techniques in the case of a complex reflection group $W$ and factorizations of its Coxeter elements $c$ into factors which belong to the (closed under conjugation) set $\mathcal{R}$ of reflections. 

Moreover, we consider a weight system (see Defn.~\ref{Defn: weight systems}) $\towerwt$ that assigns different weights on the reflections according to the filtration of $\mathcal{R}$ by a {\it\color{blue} parabolic tower} $T$. The latter is defined as a maximal\footnote{This is chosen for clarity of exposition; Thm.~\ref{thm:main} clearly holds for non-maximal chains as well.} chain in the lattice of parabolic subgroups \begin{equation}
T:=\big(\{\mathbbl{1}\}=W_0\leq W_1\leq \cdots \leq W_n=W \big),\label{Eq: parabolic tower}
\end{equation} where we will therefore have $W_i=W_{X_i}$ for flats $X_i\in \mathcal{L}_W$ such that $\op{codim}(X_i)=i$ and $X_i\supset X_{i+1}$. That is, for a tuple of complex variables $\bm\omega:=(\omega_i)_{(1\leq i\leq n)}$ and as in Defn.~\ref{Defn: partition P via tower of subgroups}, we have for a reflection $\tau\in\mathcal{R}$ that $\towerwt(\tau)=\omega_i$ if and only if $\tau\in W_i\setminus W_{i-1}$.

\begin{remark}
Notice that for such parabolic weight systems $\towerwt$ and because of Steinberg's theorem, reflections $\tau$ with the same fixed hyperplane $H=V^{\tau}$ are assigned the same weight. One can then rephrase this construction by considering a filtration of $\mathcal{A}_W$ via the localized arrangements $\mathcal{A}_{X_i}:=\bigcup_{H\supset X_i} H$ and then weight the reflections accordingly. It would be interesting to see application of such filtrations in arbitrary hyperplane arrangements. 
\end{remark}

Since the (unique as in Defn.~\ref{Defn: Coxeter element}) conjugacy class $\mathcal{C}$ of Coxeter elements and the reflections $\mathcal{R}$ are always assumed for the products and factors sets, we drop the corresponding indices from the generating function \eqref{EQ: Defn of FAC, weighted, general}. That is, we study the exponential generating function \begin{equation}
\fac_W^T(t,\bm\omega) := \sum_{\ell \geq 1 } \frac{t^\ell}{\ell!}
	\sum_{(\tau_1,\tau_2,\dots,\tau_\ell,c) \in \mathcal{R}^\ell \times \mathcal{C}\atop \tau_1\tau_2\dots\tau_\ell =c} \towerwt(\tau_1)\towerwt(\tau_2)\dots\towerwt(\tau_\ell),\label{Eq: Defn of Fac, parabolic weights}
\end{equation} of reflection factorizations of Coxeter elements $c\in \mathcal{C}$, weighted via a system $\towerwt$ indexed by a parabolic tower $T$ as given in \eqref{Eq: parabolic tower}. 

\subsection{Generalized Jucys-Murphy elements}
\label{Section: generalized JM elts}

As we discussed in \S\,\ref{Section: weight function gives commutative subalgebra} and \S\,\ref{Section:weight function by group tower} an important ingredient in computing $\fac_W^T$ is the spectrum of the algebra $\CC[\bm J]$, generated by the sums $J_i\in\CC[W]$ of factors that are weighted by the same variable $\omega_i$. In the case of reflection groups $W$, the elements $J_i$ bare sufficient resemblance to the traditional Jucys-Murphy elements (as for instance in \cite{OV}) that we will call them by the same name:

\begin{definition}
For a complex reflection group $W$ and a parabolic tower $T$ as in \eqref{Eq: parabolic tower}, we define the {\it \color{blue} generalized Jucys-Murphy} elements $J_{T,i}$ as the following sums in the group algebra $\CC[W]$: $$J_{T,i}:=\hspace{-1cm}\sum_{\quad\quad\tau\in\mathcal{R}\bigcap(W_i\setminus W_{i-1})}\hspace{-0.5cm}\tau.\quad$$We will denote by $\CC[\bm J_T]$ the commutative, simultaneously diagonalizable (see above \eqref{Eq: Spec_chi(A(w)) description}) subalgebra of $\CC[W]$ they generate.
\end{definition}

We remark that the subalgebra $\CC[\bm J_T]$ always contains the identity $\mathbbl{1}\in W$. Indeed, the first Jucys-Murphy element is of the form $J_{T,1}=\tau^{}_H+\cdots+\tau^{s-1}_{H}$ where $s$ is the order of the unitary reflection $\tau^{}_H$ and where the hyperplane $H$ is the first flat $X_1$ of our filtration. Then we have $$J_{T,1}^2=(s-2)J_{T,1}+(s-1)\mathbbl{1}.$$ If we consider non-maximal parabolic towers, then $\CC[\bm J_T]$ is still an algebra with a unit (because the $J_{T,i}$'s are simultaneously \emph{diagonalizable}) but which might not equal the identity $\mathbbl{1}\in W$. To justify the terminology consider indeed the symmetric group $S_n=S_{[n]}$ and the (standard) parabolic tower $$T=(\{\mathbbl{1}\}\leq S_{\{1\}}\leq S_{\{1,2\}}\leq\cdots\leq S_{[n]}).$$ We can easily see that $J_{T,i}=(1i)+(2i)+\cdots +(i-1,i)$ is the usual $i$-th Jucys-Murphy element. The algebras $\CC[\bm J_T]$ share some but not all properties of the algebra generated by the usual Jucys-Murphy elements in $S_n$. In particular, we will see in the next proposition that the generators $J_{T,i}$ always have integer spectrum. However, they don't always generate the Gelfand-Tsetlin algebra of the tower $T$ (see \S\ref{Section End: Okounkov-Vershik approach}).

\begin{proposition}\label{Prop: bounds on spectrum of JM elements}
Over any representation of $W$ and for any tower $T$, the generalized Jucys-Murphy elements $J_{T,i}$ have integer eigenvalues $\lambda_j(J_{T,i})$ which, moreover, satisfy the bounds $$-|\mathcal{R}_i^*\setminus\mathcal{R}_{i-1}^*|\leq \lambda_j(J_{T,i})\leq |\mathcal{R}_i\setminus \mathcal{R}_{i-1}|,$$where as usual, $\mathcal{R}^*_i$ and $\mathcal{R}_i$ denote the sets of reflecting hyperplanes and reflections of $W_i$.
\end{proposition}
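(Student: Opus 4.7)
The plan splits into two largely independent pieces: proving integrality of the eigenvalues and deriving the explicit bounds. For the integrality, I would exploit the decomposition $J_{T,i} = \mathbf{A}_i - \mathbf{A}_{i-1}$, where $\mathbf{A}_j := \sum_{\tau \in \mathcal{R}_j} \tau$ is the sum of \emph{all} reflections in the parabolic $W_j$. Since $W_j$ is itself a reflection group by Steinberg's theorem, its set $\mathcal{R}_j$ of reflections is closed under $W_j$-conjugation, so $\mathbf{A}_j$ is a central element of $\mathbb{Z}[W_j]$. Combined with the Gelfand--Tsetlin machinery already established above, this forces $J_{T,i}$ to act as the scalar $\lambda_{\towerchi} = \widetilde{\chi}_i(\mathcal{R}_i) - \widetilde{\chi}_{i-1}(\mathcal{R}_{i-1})$ on each common eigenspace $V_{\towerchi}$, reducing the integrality problem to showing that each $\widetilde{\chi}_j(\mathcal{R}_j) \in \mathbb{Z}$.

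The integrality of $\widetilde{\chi}_j(\mathcal{R}_j)$ is the main technical point, and I would argue it by regrouping reflections by hyperplane. Setting $e_H := \tfrac{1}{s_H} \sum_{k=0}^{s_H-1} \tau_H^k$ for the idempotent projector onto the $W_H$-invariants, one has
$$\mathbf{A}_j = -|\mathcal{R}_j^*| \cdot \mathbbl{1} + \sum_{O} s_O \sum_{H \in O} e_H,$$
where the outer sum ranges over $W_j$-orbits $O$ on the set $\mathcal{R}_j^*$ of hyperplanes and $s_O$ is the common reflection-order along $O$. Each orbit-sum $s_O \sum_{H \in O} e_H = |O|\cdot \mathbbl{1} + \sum_{\tau:\,V^\tau \in O} \tau$ is visibly an element of $\mathbb{Z}[W_j]$ that is central in it (being $W_j$-invariant). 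Its scalar action on an irrep $V_\chi$ of $W_j$ is thus (i) an algebraic integer, by the standard argument for central integer elements, and (ii) equal to the \emph{rational} number $s_O \cdot |O|\cdot \dim V_\chi^{W_H}/\dim V_\chi$ (divide the trace by the dimension, using that $\dim V_\chi^{W_H}$ is constant on $O$). A rational algebraic integer is an integer, so $\widetilde{\chi}_j(\mathcal{R}_j) \in \mathbb{Z}$ and hence so is $\lambda_{\towerchi}$.

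For the bounds I would use the complementary decomposition $J_{T,i} = \sum_{H \in \mathcal{R}_i^* \setminus \mathcal{R}_{i-1}^*} S_H$, where $S_H := -\mathbbl{1} + s_H e_H$ acts on any unitary $W$-representation as $-I + s_H P_H$ with $P_H$ the orthogonal projector onto the $W_H$-invariants. Each $S_H$ is then self-adjoint with spectrum $\{-1,\,s_H-1\}$, and for a unit vector $v$ a direct computation gives $\langle J_{T,i}\, v, v\rangle = -N + \sum_H s_H \|P_H v\|^2$, where $N := |\mathcal{R}_i^* \setminus \mathcal{R}_{i-1}^*|$. Since $\|P_H v\|^2 \in [0,1]$ and $\sum_H s_H = N + |\mathcal{R}_i \setminus \mathcal{R}_{i-1}|$, any scalar eigenvalue $\lambda$ of $J_{T,i}$ must lie in the claimed interval $\bigl[\,-|\mathcal{R}_i^* \setminus \mathcal{R}_{i-1}^*|,\ |\mathcal{R}_i \setminus \mathcal{R}_{i-1}|\,\bigr]$.

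The conceptual obstacle is the integrality step: although $J_{T,i}$ is a difference of two central integer elements coming from different parabolic subgroups, rationality of its scalar eigenvalues is \emph{not} automatic, since individual character values of complex reflection groups can be genuinely irrational. The orbit-by-orbit trace computation above is exactly what sidesteps this---it expresses the relevant scalars using only the non-negative integers $|O|$, $s_O$, and $\dim V_\chi^{W_H}$, without ever invoking a character value on a reflection.
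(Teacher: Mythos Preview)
Your proof is correct and follows essentially the same approach as the paper: reduce integrality to showing each $\widetilde{\chi}_j(\mathcal{R}_j)\in\mathbb{Z}$ via the hyperplane decomposition $\mathcal{R}_j=\bigsqcup_H\{\tau_H,\dots,\tau_H^{s_H-1}\}$, then bound the spectrum by writing $J_{T,i}$ as a sum over $H\in\mathcal{R}_i^*\setminus\mathcal{R}_{i-1}^*$ of the Hermitian operators $S_H=-\mathbbl{1}+s_He_H$ with spectrum $\{-1,s_H-1\}$. The only cosmetic differences are that the paper computes $\chi(\mathcal{R})\in\mathbb{Z}$ directly (eigenvalue by eigenvalue, without your orbit grouping) and phrases the bound via the Weyl inequalities rather than the Rayleigh quotient, but these are the same arguments.
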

\begin{proof}

In Section~\ref{Section:weight function by group tower} we described how any irreducible representation $V_{\chi}$ of $W$ decomposes as a vector sum of simultaneous eigenspaces $V_{\towerchi}$ of the $J_{T,i}$, indexed by certain tuples of characters $\towerchi:=(\chi_0,\chi_1,\cdots,\chi_n=\chi)\in\op{Res}_T(\chi)$. On each such eigenspace $V_{\towerchi}$, we showed \eqref{Eq: Spec_chi(A(w)) description} that the generalized Jucys-Murphy element $J_{T,i}$ acts as multiplication by $\widetilde{\chi}_i(\mathcal{R}_i)-\widetilde{\chi}_{i-1}(\mathcal{R}_{i-1})$. 

To prove the integrality property, we must show then that the normalized traces $\widetilde{\chi}(\mathcal{R})$ are integers (for an arbitrary reflection group $W$ and character $\chi\in\widehat{W}$). Since $\mathcal{R}$ forms a union of conjugacy classes, these numbers are in fact algebraic integers \cite[Corol.~1,~p.~52]{Serre}. We need to show that they are additionally rational numbers, which then forces them to be integers.

To see this, notice that because {\it unitary} reflections are semisimple, the parabolic subgroups $W_H$ of $W$ have to be cyclic (they consist of the identity and all reflections that fix $H$). This allows us to decompose the set of reflections $\mathcal{R}$  as
\begin{equation}\mathcal{R}=\bigsqcup_{H\in\mathcal{R^*}}\{\tau_H,\cdots, \tau_H^{s_H-1}\},\label{EQ: decomposition of R into cyclic groups} \end{equation} where $\tau_H$ is any generator of $W_H$ and $s_H:=|W_H|$ is its order.

For each eigenvalue $\lambda$ of $\tau_H$ in the representation $V_{\chi}$ afforded by $\chi$, the contribution of the set of reflections $\{\tau_H,\cdots,\tau_H^{s_H-1}\}$ to the trace $\chi(\mathcal{R})$ is given by the quantity $\sum_{i=1}^{s_H-1}\lambda^i$, which equals $(s_H-1)$ or $-1$ depending on whether $\lambda=1$ or not. This of course implies that the traces $\chi(\mathcal{R})$ are integers and thus that the normalized traces $\widetilde{\chi}(\mathcal{R})$ are rational numbers and completes the proof of the integrality property.

To see that the inequalities hold, consider for each $H\in\mathcal{R}^*$ the element $P_H:=\sum_{i=1}^{s_H-1} \tau_H^i$ of the group algebra $\CC[W]$. The reflections $\tau_H^i$ have of course common eigenspaces over any $V_{\chi}$ and with the same argument as in the previous paragraph, the matrix $\rho^{}_{V_{\chi}}(P_H)$ has two possible eigenvalues, $(s_H-1)$ or $-1$. Since $V_{\chi}$ is (or may assumed to be) a unitary representation, the matrices $\rho_{V_{\chi}}(P_H)$ are Hermitian and therefore the Weyl inequalities \cite[\S~6.7]{Franklin} on their eigenvalues apply. That is, since we can write $J_{T,i}=\sum_{H\in\mathcal{R}_i^*\setminus\mathcal{R}^*_{i-1}}P_H$ as a sum of Hermitian matrices, we get the bounds $$-|\mathcal{R}_i^*\setminus\mathcal{R}_{i-1}^*|\leq \lambda_j(J_{T,i})\leq \sum_{H\in\mathcal{R}^*_i\setminus\mathcal{R}^*_{i-1}}(s^{}_H-1).$$ The lower bound is just that of the statement; the higher bounds agree immediately after \eqref{EQ: decomposition of R into cyclic groups}.
\end{proof}

\begin{remark}
Notice that this previous proposition gives a very broad generalization of the integral property and bounds $-i$ and $i$ for the spectrum of the standard Jucys-Murphy element $J_{i+1}$ in $S_n$. 
\end{remark}

\subsection{Tower equivalence}

Our Thm.~\ref{thm:equivalence} in the Introduction claims a character-theoretic equivalence between two virtual characters. This notion is very much related to the representation theory of the algebras $\CC[\bm J_T]$ and we elaborate on it here. To define the equivalence relation, recall first that a \emph{virtual} character is a sum $\psi:=\sum \kappa_i\psi_i$ where $\psi_i\in\widehat{W}$ are irreducible characters and the $\kappa_i$'s complex numbers. It does not necessarily correspond to a representation of $W$, reducible or not. 

\begin{definition}
Given two \emph{virtual} characters $\psi$ and $\xi$, we say that they are {\color{blue}\emph{tower equivalent}} and write $\psi\towerequiv\xi$, if they agree on all subalgebras $\CC[\bm J_T]\subset \CC[W]$ (that is, for any tower $T$).
\end{definition}

Virtual characters are after all linear functionals on the group algebra, so any two of them will agree on a codimension $1$ subspace of $\CC[W]$. This space is not expected to be \emph{nice} though (as in to contain basis elements $g\in W$ or integer sums of them, etc.). Tower equivalence implies that the two virtual characters agree on the \emph{vector sum} of all the subalgebras $\CC[\bm J_T]$.

\begin{lemma}\label{lem:tower equiv = equality on A^k}
For two virtual characters $\psi$ and $\xi$, we have that $\psi\towerequiv\xi$ if and only if they agree on all powers of the element $\mathbf{A}(\bm\omega):=\sum_{i=1}^n\omega_i J_{T,i}$, and for any tower $T$ and weights $\omega_i$.
\end{lemma}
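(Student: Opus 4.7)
The plan is to prove both directions of the equivalence, the forward direction being essentially free and the backward direction requiring a simple polynomial-extraction argument.

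First I would dispose of the easy direction. Since the element $\mathbf{A}(\bm\omega)=\sum_{i=1}^n \omega_i J_{T,i}$ lies by definition in the subalgebra $\CC[\bm J_T]$, so does every power $\mathbf{A}(\bm\omega)^k$. Hence if $\psi\towerequiv\xi$, i.e. $\psi$ and $\xi$ agree on all $\CC[\bm J_T]$, they agree in particular on each $\mathbf{A}(\bm\omega)^k$, for any tower $T$ and any weights.

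For the converse, I would fix a tower $T$ and reduce the problem to showing equality on a spanning set of $\CC[\bm J_T]$. Since the generalized Jucys-Murphy elements $J_{T,1},\dots,J_{T,n}$ commute (established just before~\eqref{Eq: Spec_chi(A(w)) description}), the algebra $\CC[\bm J_T]$ is spanned by the ordinary monomials $\bm J_T^\alpha := J_{T,1}^{\alpha_1}\cdots J_{T,n}^{\alpha_n}$ indexed by $\alpha\in\mathbb{Z}_{\geq 0}^n$. So it suffices to prove $\psi(\bm J_T^\alpha)=\xi(\bm J_T^\alpha)$ for every multi-index $\alpha$.

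The key step is the multinomial expansion in commuting variables: for every $k\geq 0$,
$$\mathbf{A}(\bm\omega)^k \;=\; \sum_{|\alpha|=k}\binom{k}{\alpha}\,\bm\omega^{\alpha}\,\bm J_T^{\alpha},$$
where $\bm\omega^{\alpha}=\omega_1^{\alpha_1}\cdots\omega_n^{\alpha_n}$. Applying the virtual character $\psi-\xi$ and using linearity gives
$$(\psi-\xi)\bigl(\mathbf{A}(\bm\omega)^k\bigr) \;=\; \sum_{|\alpha|=k}\binom{k}{\alpha}\,\bm\omega^{\alpha}\,(\psi-\xi)(\bm J_T^{\alpha}).$$
By hypothesis the left-hand side vanishes for every choice of the complex parameters $\omega_1,\dots,\omega_n$. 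The right-hand side is a polynomial in these $n$ variables, so (by Zariski density of $\CC^n$, or equivalently by the usual coefficient-extraction argument treating the $\omega_i$ as indeterminates as in Remark~\ref{Rem: scalar weights vs variables}) each coefficient must vanish, yielding $(\psi-\xi)(\bm J_T^{\alpha})=0$ for all $\alpha$ with $|\alpha|=k$. Varying $k$ exhausts all monomials, and varying $T$ completes the proof.

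I do not foresee a genuine obstacle: the only subtle point is the passage from ``vanishing for all complex values of $\bm\omega$'' to ``vanishing of all polynomial coefficients,'' which is standard and is precisely the dual interpretation of the weights allowed by Remark~\ref{Rem: scalar weights vs variables}.
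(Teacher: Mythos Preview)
Your proof is correct and follows essentially the same approach as the paper: the forward direction is immediate since powers of $\mathbf{A}(\bm\omega)$ lie in $\CC[\bm J_T]$, and the backward direction proceeds by expanding $\mathbf{A}(\bm\omega)^k$ multinomially and extracting coefficients of the monomials $\bm\omega^{\alpha}$ to conclude agreement on the spanning set $\{\bm J_T^{\alpha}\}$. Your write-up is slightly more explicit (writing out the multinomial coefficients and invoking Remark~\ref{Rem: scalar weights vs variables} for the coefficient-extraction step), but the argument is the same.
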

\begin{proof}
The forward implication is clear since all powers of $\mathbf{A}(\bm\omega)$, including the empty product $\big(\mathbf{A}(\bm\omega)\big)^0:=\mathbbl{1}\in W$, belong to $\CC[\bm J_T]$. For the inverse implication it is beneficial to treat the $\omega_i$'s as parameters. Then, agreement of $\psi$ and $\xi$ on the power $\big(\mathbf{A}(\bm\omega)\big)^k$ means that they in fact agree on all coefficients $J_{T,1}^{a_1}\cdots J_{T,n}^{a_n}$ of the monomials $\omega_1^{a_1}\cdots\omega_n^{a_n}$ (with $\sum a_i=k$) in the expansion of $\big(\mathbf{A}(\bm\omega)\big)^k$. Since these elements $\prod J_{T,i}^{a_i}$ span the whole algebra $\CC[\bm J_T]$, the argument is complete.
\end{proof}

Tower equivalence is in some sense a statement about isomorphic \emph{virtual} $\CC[\bm J_T]$-representations. To see this, start with two virtual characters $\psi:=\sum_{i=1}^r\kappa_i\psi_i$ and $\xi:=\sum_{j=1}^s\mu_j\xi_j$, with $\psi_i,\xi_j\in\widehat{W}$ and $\kappa_i,\mu_j\in \CC$, that are tower equivalent. Consider the following two exponential generating functions, which by the previous lemma are equal: \begin{equation}\sum_{k\geq 0}\Big( \sum_{i=1}^r \kappa_i\psi_i \big( (\mathbf{A}(\bm\omega))^k\big)\Big)\cdot\dfrac{z^k}{k!}=\sum_{k\geq 0}\Big( \sum_{j=1}^s\mu_j\xi_j\big((\mathbf{A}(\bm\omega))^k\big)\Big)\cdot\dfrac{z^k}{k!}.\label{EQ: virtual isom part 1}\end{equation} As in \S\ref{Subsection: A frobenius Lemma for weighted enumeration} we can rewrite them as \begin{equation}\sum_{\lambda(\bm\omega)\in\op{Spec}_{\psi_i}(\mathbf{A}(\bm\omega))\atop i=1\dots r}\kappa_i\cdot e^{z\lambda(\bm\omega)}=\sum_{\lambda(\bm\omega)\in\op{Spec}_{\xi_j}(\mathbf{A}(\bm\omega))\atop j=1\dots s}\mu_j\cdot e^{z\lambda(\bm\omega)},\label{EQ: virtual isom part 2}\end{equation} where the $\lambda(\bm\omega)$ run through the eigenvalues of $\mathbf{A}(\bm\omega)$ on the representations $V_{\psi_i}$ and $V_{\xi_j}$ respectively. Recall from \S\ref{Section: weight function gives commutative subalgebra} that these eigenvalues are of the form $\lambda(\bm\omega)=\sum_{i=1}^n\ell_i\omega_i$, where the tuple $(\ell_i)_{i=1}^n$ is a ``weight" of the commutative algebra $\CC[\bm J_T]$ encoded via the eigenvalues of its generators $J_{T,i}$.  

The linear independence of exponential functions implies that each tuple $(\ell_i)_{i=1}^n$ should appear in both sides with the same total multiplicity (including the scaling by $\kappa_i$ and $\mu_j$). It is in this sense that we say that Tower equivalence is $\CC[\bm J_T]$-isomorphisms between virtual representations.

By grouping together these tuples $(\ell_i)_{i=1}^n$ with respect to a natural statistic we get the following proposition. It shows that tower equivalence implies a stronger relation between the constituents $\psi_i$ and $\xi_j$ than its definition suggests, and to which we will return in Corol.~\ref{cor:filterCoxeterNumbers}.

\begin{proposition}\label{Prop: formal stronger tower equivalence}
With notation as above, we have that $\psi\towerequiv\xi$ if and only if $$\sum \kappa_i\psi_i\cdot t^{\widetilde{\psi}_i(\mathcal{R})}\towerequiv\sum \mu_j\xi_j\cdot t^{\widetilde{\xi}_j(\mathcal{R})},$$ by which we mean that the corresponding coefficients of the two polynomials are tower equivalent. 
\end{proposition}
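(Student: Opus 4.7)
The backward implication is immediate and I would dispatch it first: since tower equivalence is a linear relation among virtual characters, summing the coefficient-wise tower equivalences across all powers of $t$ recovers $\psi \towerequiv \xi$.

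For the forward direction, the key object I would exploit is the central element $R := \sum_{\tau\in\mathcal{R}} \tau$. By definition $R = J_{T,1}+\cdots+J_{T,n}$, so $R$ lies in every subalgebra $\CC[\bm J_T]$; being central, it acts on each irreducible module $V_\chi$ as the scalar $\widetilde\chi(\mathcal{R})$, and these scalars are integers (this was established inside the proof of Prop.~\ref{Prop: bounds on spectrum of JM elements}). Consequently, for any $A \in \CC[\bm J_T]$ and any $k\geq 0$, the product $R^{k} A$ remains in $\CC[\bm J_T]$, and applying $\psi \towerequiv \xi$ to it gives
\begin{equation*}
\sum_i \kappa_i\, \widetilde{\psi}_i(\mathcal{R})^{k}\, \psi_i(A) \;=\; \sum_j \mu_j\, \widetilde{\xi}_j(\mathcal{R})^{k}\, \xi_j(A)
\end{equation*}
for every $k \geq 0$.

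The next step is to separate these identities according to the value of $\widetilde\chi(\mathcal{R})$. Since $\widetilde\psi_i(\mathcal{R})$ and $\widetilde\xi_j(\mathcal{R})$ take only finitely many distinct integer values, I would view the above as a Vandermonde system in $k$; inverting it on this finite support yields, for each integer $m$,
\begin{equation*}
\sum_{i:\, \widetilde\psi_i(\mathcal{R}) = m} \kappa_i\, \psi_i(A) \;=\; \sum_{j:\, \widetilde\xi_j(\mathcal{R}) = m} \mu_j\, \xi_j(A).
\end{equation*}
Since $A \in \CC[\bm J_T]$ and the tower $T$ were arbitrary, this is exactly the tower equivalence of the $t^m$-coefficients of the two polynomials, which is the desired conclusion.

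I do not anticipate a genuine obstacle: the whole argument rests on two ingredients already at hand, namely that $R$ belongs to every $\CC[\bm J_T]$ and that its central character $\widetilde\chi(\mathcal{R})$ is an integer. The Vandermonde inversion is routine once one observes that the relevant exponents are distinct integers drawn from a finite set. Conceptually, $R$ plays the role of a grading operator whose integer eigenvalues are precisely the statistic $\widetilde\chi(\mathcal{R})$ along which the coefficient-wise refinement is organized.
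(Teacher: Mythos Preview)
Your proof is correct. The underlying idea coincides with the paper's---use the central element $R=\sum_{\tau\in\mathcal{R}}\tau=\mathbf{A}$ (which lies in every $\CC[\bm J_T]$) to grade the equivalence by the scalar $\widetilde\chi(\mathcal{R})$---but the executions differ. The paper routes the argument through the spectral description \eqref{Eq: Spec_chi(A(w)) description} set up just before the proposition: it passes to the exponential identity \eqref{EQ: virtual isom part 2}, observes that the statistic $|\lambda(\bm\omega)|=\sum_i\ell_i$ is the eigenvalue of the unweighted $\mathbf{A}$ and hence constant on each irreducible, and then invokes linear independence of exponential functions to separate the levels. Your argument is more self-contained: you multiply directly by $R^k$ inside $\CC[\bm J_T]$ and invert a finite Vandermonde system, never touching the Gelfand--Tsetlin eigenvalue machinery. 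The two separations are of course equivalent (expanding $e^{zm}$ in powers of $z$ reduces linear independence of exponentials to Vandermonde), so the difference is one of packaging; your version is arguably the more elementary, while the paper's fits the surrounding narrative. One small remark: integrality of $\widetilde\chi(\mathcal{R})$ is not needed for your Vandermonde step---finiteness of the support suffices---but, as the paper also notes, it is what makes the statement a genuine coefficient-wise equivalence of \emph{polynomials} in $t$.
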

\begin{proof}
A priori, the parameter $t$ appears with algebraic integer powers, and this would not be a problem for us, but recall that we have shown in  Prop.~\ref{Prop: bounds on spectrum of JM elements} that the powers are in fact integers. 

As we discuss above, tower equivalence between $\psi$ and $\xi$ implies equation \eqref{EQ: virtual isom part 2}. There, we collect terms with respect to the statistic $|\lambda(\bm\omega)|:=\sum_{i=1}^n\ell_i$ and linear independence of exponential functions implies the formal relation $$\sum_{\lambda(\bm\omega)\in\op{Spec}_{\psi_i}(\mathbf{A}(\bm\omega))\atop i=1\dots r}\kappa_i\cdot e^{z\lambda(\bm\omega)}\cdot t^{|\lambda(\bm\omega)|}=\sum_{\lambda(\bm\omega)\in\op{Spec}_{\xi_j}(\mathbf{A}(\bm\omega))\atop j=1\dots s}\mu_j\cdot e^{z\lambda(\bm\omega)}\cdot t^{|\lambda(\bm\omega)|}.$$ The statistic $|\lambda(\bm\omega)|$ is an eigenvalue of the (unweighted) element $\mathbf{A}:=\sum J_i=\sum_{\tau\in\mathcal{R}}\tau$, which is central in $\CC[W]$. Therefore, all $\lambda(\bm\omega)$ associated to a particular constituent $\psi_i$ or $\xi_j$ have the same statistic $|\lambda(\bm\omega)|=\widetilde{\psi}_i(\mathcal{R})$ (or $\widetilde{\xi}_j(\mathcal{R})$ respectively). This completes the proof, by going backwards through \eqref{EQ: virtual isom part 2} and \eqref{EQ: virtual isom part 1} to lemma~\ref{lem:tower equiv = equality on A^k}.
\end{proof}

\subsection{The $W$-Laplacian, the \emph{group algebra} Laplacian, and Coxeter numbers}
\label{subsec:Laplacians}

Our Thm.~\ref{thm:main} gives a product form for the exponential generating function \eqref{Eq: Defn of Fac, parabolic weights} in terms of the eigenvalues of a matrix that is an analog of the usual graph Laplacian. In this section, we introduce this $W$-Laplacian, one of the main objects of study in this work, along with its group algebra counterpart. We will return to the $W$-Laplacian in \S\ref{Section: Matrix-Forest} where we give a combinatorial description of all coefficients of its characteristic polynomial and discuss how it unites two different analogs of the Matrix tree theorem for reflection groups. 

\begin{definition}[$W$-Laplacian]\label{Defn: W-Laplacian matrix}
For a complex reflection group $W$ of rank $n$ and any weight system (see Defn.~\ref{Defn: weight systems}) $\wt$ on its set $\mathcal{R}$ of reflections, we define the (weighted) {\color{blue} $W$-Laplacian matrix} as $$\op{GL}(V)\ni L_W(\bm\omega):=\sum_{\tau\in\mathcal{R}}\wt(\tau)\cdot\big(\mathbf{I}_n-\rho^{}_V(\tau)\big),$$ where $\mathbf{I}_n$ is the $n\times n$ identity matrix and $\rho^{}_V$ the reflection representation of $W$. If the weight system comes from a parabolic tower $T$ as in \eqref{Eq: parabolic tower}, we write $L_W^T(\bm\omega)$; the unweighted version is denoted by $L_W$.
\end{definition}

It is easy to see that for the symmetric group $S_n$, when the system $\wt$ is the finest possible (i.e. such that every reflection $(ij)$ gets its own weight $\omega_{ij}$), the $W$-Laplacian $L_{S_n}(\bm\omega)$ agrees (up to the projection $\mathbb{C}^n\rightarrow \mathbb{C}^{n-1}$, see Remark~\ref{rem:prefactors}) with the weighted Laplacian of the complete graph $K_n$. 
Indeed, recall (see e.g.~\cite{Stanley:EC2}) that the {\color{blue} Laplacian matrix of a weighted graph} $G$ is obtained by taking the opposite of its weighted adjacency matrix, and adjusting the diagonal elements in such a way that the sum over every line is equal to zero. Therefore, for the complete graph $K_n$ carrying the edge weight $\omega_{i,j}$ on the edge $\{i,j\}$ for each $i\neq j$, the Laplacian matrix has an $(i,j)$-coefficient equal to $-\omega_{i,j}$  if $i\neq j$, and to $\sum_{k\neq i} \omega_{i,k}$ if $i=j$.

For example, contracting notation, we have for $n=4$:

$$\underbrace{\begin{bmatrix}
\sum\omega_{1j} & -\omega_{12} &-\omega_{13} & -\omega_{14}\\
-\omega_{12} & \sum\omega_{2j} & -\omega_{23} & -\omega_{24}\\
-\omega_{13} & -\omega_{23} &  \sum\omega_{3j} & -\omega_{34}\\
-\omega_{14} & -\omega_{24} & -\omega_{34} & \sum\omega_{4j} \end{bmatrix}}_{L_{K_4}(\bm\omega)} = \sum_{i<j}\begin{bmatrix}
0 & 0 & 0 & 0 \\
0 & \omega_{ij} & -\omega_{ij} & 0\\
0 & -\omega_{ij} & \omega_{ij} & 0\\
0 & 0 & 0 & 0\end{bmatrix}=\sum_{i<j} \omega_{ij}\cdot \Bigg(\mathbf{I}_4- \underbrace{\begin{bmatrix}
1&0&0&0\\
0&0&1&0\\
0&1&0&0\\
0&0&0&1\end{bmatrix}}_{\rho^{}_V\big((ij)\big)}\Bigg).
$$
For a classical presentation of Laplacian matrices of a graph and of the matrix-tree theorem, see for example again~\cite{Stanley:EC2}.

The following proposition, which is not used in our proofs, is an analogue of the classical expression of the Laplacian in terms of the edge-vertex incidence matrix in graph theory~(see e.g. \cite{GodsilRoyle}).  
We consider an ordering $\rho_V(\tau_1),\dots,\rho_V(\tau_N)$ of the set of reflections of $W$. For $i\in \{1,\dots,N\}$ we let $\zeta_i$ be the eigenvalue of $\rho_V(\tau_i)$ different from $1$, and $r_i$ be a vector normal to the reflecting hyperplane of $\rho_V(\tau_i)$, normalized so that $\langle r_i,r_i\rangle = 1-\zeta_i$. Here $\langle \cdot, \cdot \rangle$ is a hermitian product on $GL(V)$ making reflections of $W$ orthogonal. Note that the vector $r_i$ is defined up to the choice of a unit complex number, and that for all $v\in V$ we have $\tau_i(v) = v - \langle v, r_i \rangle r_i$.
The proof of the following is straightforward (see the similar argument in the proof of Lemma~\ref{Lem:Burman M-T} in Section~\ref{Section: Matrix-Forest}). 
%Following standard notation, for a complex reflection group $W$ and a reflecting hyperplane $H$, we let $e_H$ be the order of the cyclic group generated by reflections in $W$ whose nullspace is $H$. In the case of real groups, $e_H$ is always equal to $2$.
\begin{proposition}\label{prop:L=RR^T}
	Let $(e_i)_{i=1}^n$ be an orthormal basis of $V$, and $R$ the $n\times N$ matrix $R_{ij}:=\langle e_i,r_j\rangle$. Then the matrix of the weighted $W$-Laplacian $L_W(\bm\omega)$ in the basis $(e_i)_{i=1}^n$ is given by:
	$$
	Mat_{(e_i)}(L_W(\bm\omega)) = R \Omega \overline{R}^T,
	$$
	where $\Omega=\mathrm{diag}(\wt(\tau_1),\dots,\wt(\tau_N))$.
\end{proposition}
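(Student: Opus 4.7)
The plan is to rewrite $L_W(\bm\omega)$ as a sum of rank-one operators and then recognize this sum as the matrix product $R\Omega\overline{R}^T$.

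First, I would use the formula $\tau_i(v)=v-\langle v,r_i\rangle r_i$ given in the setup to compute, for each reflection $\tau_i$,
\[
(\mathbf{I}_n-\rho_V(\tau_i))(v)=\langle v,r_i\rangle\,r_i.
\]
This identifies $\mathbf{I}_n-\rho_V(\tau_i)$ as the rank-one operator $v\mapsto\langle v,r_i\rangle r_i$, so the proof reduces to expressing this operator as an outer product in the basis $(e_j)_{j=1}^n$.

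Next, evaluating on the basis vectors, the $(j,k)$-entry of the matrix of $\mathbf{I}_n-\rho_V(\tau_i)$ in $(e_j)$ is
\[
\bigl\langle (\mathbf{I}_n-\rho_V(\tau_i))(e_k),\,e_j\bigr\rangle_{\!\mathrm{coord}}=\bigl(\text{$j$-th coordinate of }\langle e_k,r_i\rangle\,r_i\bigr),
\]
where the coordinate of $r_i$ on $e_j$ is $R_{ji}$ and the scalar $\langle e_k,r_i\rangle$ is $R_{ki}$ (reading off from the definition $R_{ij}=\langle e_i,r_j\rangle$, with the appropriate convention for the hermitian product). In this way the rank-one operator $\mathbf{I}_n-\rho_V(\tau_i)$ is realized in matrix form as the outer product of the $i$-th column of $R$ with its conjugate, i.e.\ as $R_{:,i}\bigl(\overline{R_{:,i}}\bigr)^T$.

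Finally I would assemble the pieces: since $L_W(\bm\omega)=\sum_{i=1}^N \mathbf{w}(\tau_i)(\mathbf{I}_n-\rho_V(\tau_i))$, the previous step gives
\[
\mathrm{Mat}_{(e_i)}(L_W(\bm\omega))=\sum_{i=1}^N \mathbf{w}(\tau_i)\,R_{:,i}\bigl(\overline{R_{:,i}}\bigr)^T = R\,\Omega\,\overline{R}^T,
\]
which is the claim. The only step that requires care is the bookkeeping with the sesquilinear convention of $\langle\cdot,\cdot\rangle$ (and the phase freedom in the choice of each $r_i$); the rest is a routine translation of a sum of rank-one operators into an $R\Omega\overline{R}^T$ factorization, exactly as in the classical proof of the analogous identity $L_G=BWB^T$ for weighted graph Laplacians.
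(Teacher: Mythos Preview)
Your proposal is correct and follows essentially the same route as the paper: the paper does not spell out a proof here but refers to the argument in Lemma~\ref{Lem:Burman M-T}, which likewise evaluates the action on basis vectors via $(\mathbf{I}_n-\rho_V(\tau_i))(v)=\langle v,r_i\rangle r_i$ and reads off the matrix as $R\Omega\overline{R}^T$. Your caveat about tracking the sesquilinear convention is exactly the only place care is needed.
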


The unweighted $W$-Laplacian $L_W$ is the image under $\rho_V$ of a \emph{central} element of the group algebra (since $\mathcal{R}$ is closed under conjugation). This element $\mathbf{B}$ was considered by Beynon, Lusztig \cite{BL} and also Malle \cite{Malle}, but more explicitly by Gordon and Griffeth \cite[\S~1.3]{GG}. Its weighted version $\mathbf{B}(\bm\omega)$ has been considered in the symmetric group by various authors, including \cite[\S~2]{AK}, \cite{BZ}\footnote{Their element $\mathbf{B}(\bm\omega)$ \cite[p.~135]{BZ} is our element $\mathbf{A}(\bm\omega)$ of \S\ref{Section: Rep-theoretic techniques on enumeration}, a shift of our $\mathbf{B}(\bm\omega)$ of Defn.~\ref{Defn: group-algebra-Laplacian}.}, and \cite[p.~472]{Bacher}. 

It is not very difficult  to see that $\mathbf{B}(\bm\omega)$ is the weighted Laplacian for the Cayley graph of $W$, with the set of reflections $\mathcal{R}$ as generators. This leads to the following definition (Cesi \cite[\S~2]{cesi_remarks} calls a related, more general object the \emph{representation} Laplacian). 

\begin{definition}[\emph{group algebra} Laplacian]
\label{Defn: group-algebra-Laplacian}
For a complex reflection group $W$, and any weight system (Defn.~\ref{Defn: weight systems}) $\wt$ on the set $\mathcal{R}$ of reflections, we  define the (weighted) \emph{\color{blue}  group algebra Laplacian}: $$\CC[\bm\omega][W]\ni \mathbf{B}(\bm\omega):=\sum_{\tau\in\mathcal{R}}\wt(\tau)\cdot (\mathbbl{1}-\tau),$$ where $\mathbbl{1}$ denotes the identity in $W$. It induces the $W$-Laplacian via $\rho_V\big(\mathbf{B}(\bm\omega)\big)=L_W(\bm\omega)$; we write $\mathbf{B}$ for its unweighted version and $\mathbf{B}^T(\bm\omega)$ if the weights are given by a parabolic tower $T$ as in \eqref{Eq: parabolic tower}.
\end{definition}

The \emph{group algebra} Laplacian is a shift of the element $\mathbf{A}(\bm\omega):=\sum_{\tau\in\mathcal{R}}\wt(\tau)\cdot\tau$ of \S\ref{Section: Rep-theoretic techniques on enumeration}. Indeed, \begin{equation} \mathbf{B}(\bm\omega)=\wt(\mathcal{R})-\mathbf{A}(\bm\omega),\label{Eq: B(w)=wt(R)-A(w)}\end{equation} where we write $\wt(\mathcal{R}):=\sum_{\tau\in\mathcal{R}}\wt(\tau)$ as usual (and $\wt$ is \emph{any} weight system as in Defn.~\ref{Defn: weight systems}). It therefore can be used to give a formula for the generating function $\mathcal{F}_W^T(t,\bm\omega)$ of \eqref{Eq: Defn of Fac, parabolic weights} after the techniques of \S\ref{Section: Rep-theoretic techniques on enumeration}. We will see that the reason this shifted version is better behaved is that $\mathbf{B}(\bm\omega)$ is a \emph{Lie-like} element for the reflection representation of $W$ (see \S\ref{sec:LieLike}). 

The centrality of the unweighted element $\mathbf{B}$ allows us to give the first connection between $W$-Laplacians and the numerology of reflection groups (see also \cite[\S~1.3]{GG}). Recall our Definition~\eqref{Eq: Coxeter number h} of the Coxeter number $h$.

\begin{proposition}\label{Prop: L_W has all eigenvalues =h}
$L_W$ is the matrix of scalar multiplication by the Coxeter number $h$ of $W$.
\end{proposition}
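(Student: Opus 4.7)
The plan is to use the fact that $L_W = \rho_V(\mathbf{B})$ where $\mathbf{B}$ is a central element of $\mathbb{C}[W]$, apply Schur's lemma, and then compute the resulting scalar explicitly using the cyclic structure of the pointwise hyperplane stabilizers.

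First I would observe that $\mathbf{B} = |\mathcal{R}|\cdot\mathbbl{1} - \sum_{\tau\in\mathcal{R}}\tau$ lies in the center of $\mathbb{C}[W]$, since $\mathcal{R}$ is a union of conjugacy classes. Assuming that $W$ is irreducible (which is the relevant case; the general case follows by block-diagonal decomposition), the reflection representation $\rho^{}_V$ is irreducible, and Schur's lemma forces $L_W = \rho^{}_V(\mathbf{B}) = c\cdot\mathbf{I}_n$ for a scalar $c$, which one can read off as the normalized trace $c = \widetilde{\chi}^{}_V(\mathbf{B}) = \chi^{}_V(\mathbf{B})/n$.

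The remaining task is to show $\chi^{}_V(\mathbf{B}) = |\mathcal{R}| + |\mathcal{R}^*|$. For any reflection $\tau$, the matrix $\rho^{}_V(\tau)$ acts as the identity on its fixed hyperplane $H$ and as multiplication by some non-trivial root of unity $\zeta^{}_\tau$ on the orthogonal line, so $\chi^{}_V(\mathbf{I}_n - \rho^{}_V(\tau)) = 1 - \zeta^{}_\tau$. Grouping the reflections by reflecting hyperplane via the decomposition~\eqref{EQ: decomposition of R into cyclic groups}, for each $H\in\mathcal{R}^*$ the stabilizer $W_H$ is cyclic of order $s^{}_H$, generated by some $\tau^{}_H$ whose non-trivial eigenvalue is a primitive $s^{}_H$-th root of unity $\zeta$. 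The contribution of the reflections $\tau^{}_H,\dots,\tau^{s^{}_H-1}_H$ to $\chi^{}_V(\mathbf{B})$ is
\[
\sum_{k=1}^{s^{}_H-1}(1-\zeta^k) = (s^{}_H-1) - \sum_{k=1}^{s^{}_H-1}\zeta^k = (s^{}_H-1)-(-1) = s^{}_H.
\]
Summing over $H\in\mathcal{R}^*$ and using $\sum_H s^{}_H = |\mathcal{R}^*| + |\mathcal{R}|$ yields $\chi^{}_V(\mathbf{B}) = |\mathcal{R}| + |\mathcal{R}^*|$, and dividing by $n$ gives $c = h$ by the very definition~\eqref{Eq: Coxeter number h} of the Coxeter number.

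There is no real obstacle here; the only subtle point is that one must use the irreducibility of the reflection representation to invoke Schur, and the elementary geometric-series identity $\sum_{k=1}^{s-1}\zeta^k = -1$ for a primitive $s$-th root of unity to convert the hyperplane-by-hyperplane sums into the combinatorial quantity $|\mathcal{R}|+|\mathcal{R}^*|$. The argument is essentially the shortest conceptual explanation for why the integer $h$ defined by~\eqref{Eq: Coxeter number h} is forced to be an integer in the first place (cf.~Prop.~\ref{Prop: coxeter numbers are integers} referenced in the text), and it makes transparent why the unweighted specialization $\omega_i=1$ of Theorem~\ref{thm:main} collapses all eigenvalues of $L_W^T$ to the single value $h$.
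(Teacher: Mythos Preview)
Your proof is correct and follows essentially the same approach as the paper: both use that $\mathbf{B}$ is central to reduce $L_W$ to a scalar on the irreducible reflection representation, then compute that scalar as $\op{Tr}(L_W)/n$ via the hyperplane-by-hyperplane decomposition~\eqref{EQ: decomposition of R into cyclic groups} and the geometric-series identity $\sum_{k=1}^{s_H-1}(1-\zeta^k)=s_H$.
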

\begin{proof}
As is immediate by the two definitions above, we have $L_W=\rho^{}_V(\mathbf{B})$. Since $\mathbf{B}$ is a central element it will act in the (irreducible) reflection representation $V$ as multiplication by a scalar; namely all eigenvalues of $L_W$ are equal to $\op{Tr}(L_W)/n$. We will show that $\op{Tr}(L_W)=hn$.

We repeat here the decomposition of the set of reflections $\mathcal{R}$ we gave in \eqref{EQ: decomposition of R into cyclic groups}. It states that
$$\mathcal{R}=\bigsqcup_{H\in\mathcal{R^*}}\{\tau_H,\cdots, \tau_H^{s_H-1}\},$$ where $\tau_H$ is any generator of $W_H$ and $s_H:=|W_H|$ is its order. We may assume that the non-1 eigenvalue of $\tau_H$ equals $e^{2\pi i/s_H}$ and write:
$$\op{Tr}(L_W)=\sum_{\tau\in\mathcal{R}}\op{Tr}\big(\mathbf{I}_n-\rho_V(\tau)\big)=\sum_{H\in\mathcal{R}^*}\sum_{k=1}^{s_H-1}1-e^{2\pi i k/s_H}=\sum_{H\in\mathcal{R}^*}s_H=|\mathcal{R}|+|\mathcal{R}^*|,$$ where the last equality is given again by the decomposition above. The coxeter number $h$ was defined in \eqref{Eq: Coxeter number h} as $(|\mathcal{R}|+|\mathcal{R}^*|)/n$ and the proof is complete.
\end{proof}

Gordon and Griffeth \cite{GG} used the element $\mathbf{B}$ to generalize Coxeter numbers to a statistic for each irreducible character of a complex reflection group $W$. 

\begin{definition}[generalized Coxeter numbers]\label{Defn: Coxeter numbers}
For a complex reflection group $W$ and a character $\chi\in\widehat{W}$, we define the {\color{blue} \emph{generalized Coxeter number}} $c_{\chi}$ as the normalized trace of the central, unweighted (Defn.~\ref{Defn: group-algebra-Laplacian}) element $\mathbf{B}=\sum_{\tau\in\mathcal{R}}(\mathbbl{1}-\tau)$. That is,$$c_{\chi}:=\dfrac{1}{\chi(1)}\cdot\big(|\mathcal{R}|\chi(1)-\chi(\mathcal{R})\big)=|\mathcal{R}|-\widetilde{\chi}(\mathcal{R}).$$
\end{definition}

In particular, for the reflection representation $V$ of $W$ and since $L_W=\rho^{}_V(\mathbf{B})$, Prop.~\ref{Prop: L_W has all eigenvalues =h} implies that the Coxeter number $c_{\chi^{}_V}$ agrees with the usual Coxeter number $h$ of $W$ defined in \eqref{Eq: Coxeter number h}.

\begin{proposition}\label{Prop: coxeter numbers are integers}
The (generalized) Coxeter numbers $c_{\chi}$ are integers and they satisfy $$0\leq c_{\chi}\leq hn,$$ where $h$ is the (usual) Coxeter number of $W$ as defined in \eqref{Eq: Coxeter number h}.\newline Moreover, for any tower $T$ and tuple $\towerchi:=(\chi_0,\chi_1,\cdots,\chi_n)\in\op{Res}_T(\chi)$, we have $c_{\chi_i}\geq  c_{\chi_{i-1}}$.
\end{proposition}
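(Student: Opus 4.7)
The proof decomposes naturally along the three assertions, and each can be handled with tools already at hand.

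\emph{Integrality.} This will follow directly from what is established in the proof of Prop.~\ref{Prop: bounds on spectrum of JM elements}. There, the decomposition of $\mathcal{R}$ into the cyclic pieces $\{\tau_H, \tau_H^2, \dots, \tau_H^{s_H-1}\}$ indexed by $H \in \mathcal{R}^*$ is shown to force $\chi(\mathcal{R}) \in \mathbb{Z}$, whence (combined with centrality of $\sum_{\tau}\tau$) the normalized trace $\widetilde{\chi}(\mathcal{R})$ is an integer for every $\chi \in \widehat{W}$. Since $|\mathcal{R}| \in \mathbb{Z}$, the identity $c_\chi = |\mathcal{R}| - \widetilde{\chi}(\mathcal{R})$ of Defn.~\ref{Defn: Coxeter numbers} immediately gives $c_\chi \in \mathbb{Z}$.

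\emph{Bounds $0 \le c_\chi \le hn$.} The plan is to realize $\rho_\chi(\mathbf{B})$ as a sum of Hermitian positive semi-definite operators with controlled maximum eigenvalues. Using the cyclic decomposition of the reflections at each hyperplane, write
\[
\mathbf{B} \;=\; \sum_{H \in \mathcal{R}^*} \bigl( (s_H - 1)\, \mathbbl{1} - P_H \bigr), \qquad P_H := \tau_H + \tau_H^2 + \cdots + \tau_H^{s_H - 1}.
\]
Fixing a $W$-invariant hermitian form on $V_\chi$, the set of powers of $\tau_H$ appearing in $P_H$ is stable under inversion, so $\rho_\chi(P_H)$ is Hermitian; its eigenvalues are $s_H - 1$ on the $1$-eigenspace of $\rho_\chi(\tau_H)$ and $-1$ on every other eigenspace. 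Each summand $\rho_\chi((s_H-1)\mathbbl{1} - P_H)$ is therefore positive semi-definite with spectrum contained in $\{0, s_H\}$. Centrality of $\mathbf{B}$ forces $\rho_\chi(\mathbf{B}) = c_\chi \cdot \mathbf{I}$, so this common eigenvalue is non-negative (being that of a PSD operator) and at most $\sum_{H \in \mathcal{R}^*} s_H$ (by subadditivity of the operator norm on PSD operators, or directly by Weyl's inequalities as used in Prop.~\ref{Prop: bounds on spectrum of JM elements}). Equation~\eqref{Eq: Coxeter number h} gives $\sum_H s_H = |\mathcal{R}| + |\mathcal{R}^*| = hn$, finishing the bound.

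\emph{Monotonicity.} This will be a one-line consequence of the upper bound in Prop.~\ref{Prop: bounds on spectrum of JM elements} combined with the Gelfand-Tsetlin description of the generalized Jucys-Murphy action recorded in \eqref{Eq: Spec_chi(A(w)) description} and the preceding paragraph. On the Gelfand-Tsetlin summand $V_{\towerchi}$ of $V_\chi$, the element $J_{T,i}$ acts as multiplication by $\widetilde{\chi}_i(\mathcal{R}_i) - \widetilde{\chi}_{i-1}(\mathcal{R}_{i-1})$, and Prop.~\ref{Prop: bounds on spectrum of JM elements} forces this scalar to be at most $|\mathcal{R}_i \setminus \mathcal{R}_{i-1}| = |\mathcal{R}_i| - |\mathcal{R}_{i-1}|$. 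Rearranging,
\[
c_{\chi_i} - c_{\chi_{i-1}} \;=\; \bigl(|\mathcal{R}_i| - |\mathcal{R}_{i-1}|\bigr) - \bigl(\widetilde{\chi}_i(\mathcal{R}_i) - \widetilde{\chi}_{i-1}(\mathcal{R}_{i-1})\bigr) \;\ge\; 0.
\]

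The only step with substance is the upper bound in the second assertion; its essential inputs are the cyclic decomposition of hyperplane stabilizers (so that $\mathbf{B}$ is a positive sum over $H \in \mathcal{R}^*$) and the centrality of $\mathbf{B}$ (so that its whole spectrum on $V_\chi$ collapses to the single scalar $c_\chi$). Integrality and monotonicity are then essentially bookkeeping consequences of results already packaged in Prop.~\ref{Prop: bounds on spectrum of JM elements} and the Gelfand-Tsetlin construction.
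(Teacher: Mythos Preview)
Your proof is correct and largely parallels the paper's own argument: integrality is drawn from the same source (the proof of Prop.~\ref{Prop: bounds on spectrum of JM elements}), and monotonicity follows identically from the upper bound on the Jucys--Murphy eigenvalues together with the Gelfand--Tsetlin description.

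The one structural difference lies in how the global bounds $0\le c_\chi\le hn$ are obtained. You argue directly: decompose $\mathbf{B}=\sum_H\bigl((s_H-1)\mathbbl{1}-P_H\bigr)$ as a sum of Hermitian PSD operators with spectra in $\{0,s_H\}$, then read off $0\le c_\chi\le\sum_H s_H=hn$ from positivity and subadditivity. The paper instead first establishes the two-sided bound $0\le c_{\chi_i}-c_{\chi_{i-1}}\le |\mathcal{R}_i\setminus\mathcal{R}_{i-1}|+|\mathcal{R}_i^*\setminus\mathcal{R}_{i-1}^*|$ on each increment (using both inequalities of Prop.~\ref{Prop: bounds on spectrum of JM elements}) and then telescopes along any tower to get $0\le c_\chi\le hn$. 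Your route is slightly more self-contained in that it does not need to invoke a tower for the global bounds; the paper's route has the advantage of deriving the global bounds and the monotonicity from a single computation. The underlying mechanism---Weyl's inequalities applied to the Hermitian pieces $P_H$---is the same in both.
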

\begin{proof}
Everything here is a corollary of the analogous statements for the spectrum of the generalized Jucys-Murphy elements (Prop.~\ref{Prop: bounds on spectrum of JM elements}) and Defn.~\ref{Defn: Coxeter numbers} above. In particular, the $c_{\chi}$'s are integers because the normalized traces $\widetilde{\chi}(\mathcal{R})$ are integers (a fact shown in the proof of Prop.~\ref{Prop: bounds on spectrum of JM elements}).

To see the bounds, recall (again from the same proof) that the generalized Jucys-Murphy elements $J_{T,i}$ have eigenvalues of the form $\widetilde{\chi}_i(\mathcal{R}_i)-\widetilde{\chi}_{i-1}(\mathcal{R}_{i-1})$ (with some suitable multiplicity). Since by Defn.~\ref{Defn: Coxeter numbers} we have that $$c_{\chi_i}-c_{\chi_{i-1}}=|\mathcal{R}_i\setminus\mathcal{R}_{i-1}|-\big(\widetilde{\chi}_i(\mathcal{R}_i)-\widetilde{\chi}_{i-1}(\mathcal{R}_{i-1})\big),$$ the bounds of Prop.~\ref{Prop: bounds on spectrum of JM elements} imply that $$0\leq c_{\chi_i}-c_{\chi_{i-1}}\leq |\mathcal{R}_i\setminus \mathcal{R}_{i-1}|+|\mathcal{R}^*_i\setminus\mathcal{R}_{i-1}^*|.$$
This gives all bounds of the statement, trivially for the second, and by summing up over $i=1\dots n$ and writing $hn=|\mathcal{R}|+|\mathcal{R}^*|$ for the first.
\end{proof}

The Coxeter numbers of the exterior powers $\bigwedge^k(V)$ of the reflection representation $V$ of $W$ are spread out evenly across the bounds in the previous proposition. The following is a simple calculation, analogous to Prop.~\ref{Prop: L_W has all eigenvalues =h} (see \cite[proof of lemma~6]{Michel}) which we also recover in a different way as a consequence of Corollary~\ref{cor:BZ} in Section~\ref{subsec:LieLike}.

\begin{proposition}\label{prop:wedgeCoxeterNumbers}
The characters $\chi^{}_k:=\chi_{\bigwedge^k(V)}$ have Coxeter numbers given by $c^{}_{\chi^{}_k}=hk$.
\end{proposition}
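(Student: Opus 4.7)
The plan is to compute $\widetilde{\chi}_k(\mathcal{R})$ directly and then use Definition~\ref{Defn: Coxeter numbers}, exactly mimicking the strategy of the proof of Prop.~\ref{Prop: L_W has all eigenvalues =h}. First I would compute the value of $\chi_k$ on a single reflection $\tau\in\mathcal{R}$: since $\tau$ is a unitary reflection, $\rho_V(\tau)$ has eigenvalues $1$ (with multiplicity $n-1$) and some root of unity $\zeta_\tau\neq 1$, so the induced operator on $\bigwedge^k(V)$ is diagonalized with eigenvalues indexed by $k$-subsets of the eigenbasis. Exactly $\binom{n-1}{k}$ of these avoid the $\zeta_\tau$-line and contribute $1$, and $\binom{n-1}{k-1}$ contain it and contribute $\zeta_\tau$, which gives
\[
\chi_k(\tau)=\binom{n-1}{k}+\binom{n-1}{k-1}\zeta_\tau.
\]

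Next I would sum over $\tau\in\mathcal{R}$ using the decomposition~\eqref{EQ: decomposition of R into cyclic groups}: $\mathcal{R}=\bigsqcup_{H\in\mathcal{R}^*}\{\tau_H,\dots,\tau_H^{s_H-1}\}$. For each hyperplane $H$, the non-$1$ eigenvalues of the reflections $\tau_H^i$ (for $i=1,\dots,s_H-1$) are the non-trivial $s_H$-th roots of unity, whose sum equals $-1$. Summing over all reflecting hyperplanes thus gives $\sum_{\tau\in\mathcal{R}}\zeta_\tau=-|\mathcal{R}^*|$, and therefore
\[
\chi_k(\mathcal{R})=\binom{n-1}{k}\,|\mathcal{R}|\;-\;\binom{n-1}{k-1}\,|\mathcal{R}^*|.
\]

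Finally I would normalize by $\chi_k(1)=\binom{n}{k}$, using the identities $\binom{n-1}{k}/\binom{n}{k}=(n-k)/n$ and $\binom{n-1}{k-1}/\binom{n}{k}=k/n$, to obtain
\[
\widetilde{\chi}_k(\mathcal{R})=\frac{n-k}{n}|\mathcal{R}|-\frac{k}{n}|\mathcal{R}^*|.
\]
Substituting into Definition~\ref{Defn: Coxeter numbers} and using $hn=|\mathcal{R}|+|\mathcal{R}^*|$ from \eqref{Eq: Coxeter number h}, I would conclude
\[
c_{\chi_k}=|\mathcal{R}|-\widetilde{\chi}_k(\mathcal{R})=\frac{k}{n}\bigl(|\mathcal{R}|+|\mathcal{R}^*|\bigr)=kh.
\]
There is no real obstacle here: the only step worth checking carefully is the sum of the non-trivial $s_H$-th roots of unity over the cyclic parabolic $W_H$, which is immediate from the fact that all $s_H$-th roots of unity sum to zero. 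The result specializes to Prop.~\ref{Prop: L_W has all eigenvalues =h} when $k=1$, providing a consistency check.
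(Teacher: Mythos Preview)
Your proof is correct and follows essentially the same approach the paper indicates: a direct trace computation analogous to Prop.~\ref{Prop: L_W has all eigenvalues =h}, decomposing $\mathcal{R}$ via~\eqref{EQ: decomposition of R into cyclic groups} and summing the non-trivial roots of unity over each cyclic $W_H$. The paper does not spell out the details but points to this calculation (and to~\cite[proof of Lemma~6]{Michel}), and also notes an alternative derivation via Corollary~\ref{cor:BZ} on Lie-like elements.
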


This allows us to rephrase Thm.~\ref{thm:equivalence} in a way that separates each $k$-th exterior power $\bigwedge^k(V)$ of the reflection representation and does so almost suggesting that in fact the exterior \emph{algebra} of $V$ is at play here.

\begin{corollary}\label{cor:filterCoxeterNumbers}
We can rewrite Thm.~\ref{thm:equivalence} as the formal tower equivalence
	\begin{align}\label{eq:filterCoxeterNumbers}
		\sum_{\chi\in\widehat{W}}\chi(c^{-1})\cdot\chi\cdot t^{c_{\chi}/h}\towerequiv\sum_{k=0}^n(-1)^k\chi_{\bigwedge^k(V)}\cdot t^{k},
	\end{align}
	by which we mean that the corresponding coefficients of the two sides are tower equivalent. 
\end{corollary}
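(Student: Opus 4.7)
The corollary is essentially a change of grading applied to Theorem~\ref{thm:equivalence}, so my plan is to feed that theorem into Proposition~\ref{Prop: formal stronger tower equivalence}, which refines any tower equivalence by splitting its constituents according to the statistic $\widetilde{\psi}(\mathcal{R})$. Starting from
$$\sum_{\chi\in\widehat{W}}\chi(c^{-1})\cdot\chi\towerequiv\sum_{k=0}^n(-1)^k\chi_{\wedge^k(V)},$$
a direct application of Proposition~\ref{Prop: formal stronger tower equivalence} yields the formal equivalence
$$\sum_{\chi\in\widehat{W}}\chi(c^{-1})\cdot\chi\cdot t^{\widetilde{\chi}(\mathcal{R})}\towerequiv\sum_{k=0}^n(-1)^k\chi_{\wedge^k(V)}\cdot t^{\widetilde{\chi_{\wedge^k(V)}}(\mathcal{R})},$$
where the equivalence is to be read coefficient by coefficient in $t$.

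Next I would rewrite the exponents in terms of the generalized Coxeter numbers of Definition~\ref{Defn: Coxeter numbers}. Directly from that definition one has $\widetilde{\chi}(\mathcal{R}) = |\mathcal{R}|-c_\chi$, while Proposition~\ref{prop:wedgeCoxeterNumbers} gives $c_{\chi_{\wedge^k(V)}}=hk$, so the exponent on the right-hand side becomes $|\mathcal{R}|-hk$. In particular, both sides carry a common shift of $t^{|\mathcal{R}|}$. Since the relation is coefficient-wise, it is preserved under multiplication by any Laurent monomial in $t$, so after clearing this shared factor and substituting $t\mapsto t^{-1/h}$ the exponents become $c_\chi/h$ on the left and $k$ on the right, which is exactly the claim.

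The only thing to check is that the formal rescaling is legitimate: by Proposition~\ref{Prop: coxeter numbers are integers} the values $c_\chi$ are non-negative integers (and $hk$ obviously is), so both polynomials a priori live in $\mathbb{C}[t^{\pm 1}]$ and after substitution in $\mathbb{C}[t^{\pm 1/h}]$, with matching gradings. There is no genuine obstacle in the argument; the whole proof is a transparent bookkeeping exercise, and the \emph{if and only if} in Proposition~\ref{Prop: formal stronger tower equivalence} in fact shows that Corollary~\ref{cor:filterCoxeterNumbers} is logically equivalent to Theorem~\ref{thm:equivalence} once Proposition~\ref{prop:wedgeCoxeterNumbers} is in hand.
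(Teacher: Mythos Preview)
Your proof is correct and follows essentially the same approach as the paper: apply Proposition~\ref{Prop: formal stronger tower equivalence} to the tower equivalence of Theorem~\ref{thm:equivalence}, then rewrite the exponents $\widetilde{\chi}(\mathcal{R})$ in terms of Coxeter numbers via Definition~\ref{Defn: Coxeter numbers} and Proposition~\ref{prop:wedgeCoxeterNumbers}. Your version is in fact more explicit than the paper's, spelling out the common shift by $t^{|\mathcal{R}|}$ and the substitution $t\mapsto t^{-1/h}$ that the paper leaves implicit.
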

\begin{proof}
In Prop.~\ref{Prop: formal stronger tower equivalence} we showed that tower equivalence between any two virtual characters $\psi$ and $\xi$ implies a seemingly stronger formal relation. Applying the result of Prop.~\ref{Prop: formal stronger tower equivalence} to the virtual characters $\sum_{\chi\in\widehat{W}}\chi(c^{-1})\cdot\chi$ and $\sum_{k=0}^n(-1)^k\chi^{}_{\bigwedge^k(V)}$ of Thm.~\ref{thm:equivalence}, and rewriting the normalized traces $\widetilde{\chi}(\mathcal{R})$ in terms of the Coxeter numbers $c_{\chi}$, we immediately get the statement.
\end{proof}

\section{Proof of Theorem~\ref{thm:main} for the exceptional groups}
\label{sec:exceptional}

In this section, we discuss the computer-assisted verification of Thm.~\ref{thm:main} for the exceptional groups. It would be impossible (see discussion before Prop.~\ref{Prop: num of chains face lattice}) to check all possible parabolic towers, so we first prove that it is sufficient to check orbit representatives of them (a fact we will also use for the infinite families).

We write $N(W):=N_{\op{GL}(V)}(W)$ for the normalizer of $W$ in $\op{GL}(V)$. Then, for  an element $g\in N(W)$ and a parabolic tower $T$ as in \eqref{Eq: parabolic tower}, we consider the \emph{conjugate} tower \begin{equation}
^gT:=\big(\{\mathbbl{1}\}=\ \!\! ^gW_0\leq\ \!\! ^gW_1\leq \cdots \leq\ \!\! ^gW_n=W\big),\label{EQ: ^gT defn}
\end{equation}where as usual $ ^gW_i:=g^{-1}W_ig$. The collection of fixed sets $V^w$ of elements $w\in W$ is of course fixed under conjugation by $g\in N(W)$. Therefore, the set of parabolic subgroups is also respected and $ ^gT$ is itself a parabolic tower. With the following proposition, we show that it is sufficient to prove Thm.~\ref{thm:main} for one representative in each such class $\{\ \!\! ^gT\ |\ g\in N(W)\}$.

\begin{proposition}\label{Prop: Thm invariant under N(W)-orbits}
Both sides of equation \eqref{eq:mainThm} in Thm.~\ref{thm:main} are invariant in $N(W)$-orbits of towers~$T$.
\end{proposition}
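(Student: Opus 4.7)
The plan is to exhibit explicit bijections showing that both sides of~\eqref{eq:mainThm} are unchanged when $T$ is replaced by~${}^gT$ for $g\in N(W)$. First I would record the following structural facts about conjugation by $g$: it permutes the set $\mathcal{R}$ of reflections (trivially), it permutes parabolic subgroups---indeed ${}^gW_X = W_{g^{-1}X}$ for every flat $X\in\mathcal{L}_W$, so that ${}^gT$ is again a parabolic tower---and it permutes the Coxeter class $\mathcal{C}$. The last point follows from Defn.~\ref{Defn: Coxeter element}: since $g\in GL(V)$, conjugation preserves eigenvalues, and since $g$ permutes reflecting hyperplanes it preserves the regularity of vectors, whence the property of an element of $W$ being $e^{2\pi i/h}$-regular. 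Unwinding the definition~\eqref{eq:weightintro} of the parabolic weighting then yields the identity
$$\mathbf{w}^{}_{{}^gT}(\tau) \;=\; \mathbf{w}^{}_T(g\tau g^{-1}) \qquad \text{for every } \tau\in\mathcal{R},$$
which is the only real input needed.

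For the left-hand side $\fac_W^T(t,\bm\omega)$, I would use the map
$$(\tau_1,\ldots,\tau_\ell,c)\;\longmapsto\;(g\tau_1 g^{-1},\ldots,g\tau_\ell g^{-1},\,gcg^{-1}).$$
It is a bijection of $\mathcal{R}^\ell\times\mathcal{C}$ onto itself, it preserves the relation $\tau_1\cdots\tau_\ell=c$, and by the weight identity above it matches each $\mathbf{w}^{}_{{}^gT}$-weighted factorisation of $c$ with a $\mathbf{w}^{}_T$-weighted factorisation of $gcg^{-1}$ of the same total weight. Summing over the \emph{entire} Coxeter class (essential, since a single $c$ need not be $g$-fixed) yields $\fac_W^{{}^gT}(t,\bm\omega)=\fac_W^T(t,\bm\omega)$.

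For the right-hand side, the total reflection weight $\mathbf{w}^{}_T(\mathcal{R})=\sum_{\tau\in\mathcal{R}}\mathbf{w}^{}_T(\tau)$ is visibly invariant under the reindexing $\tau \mapsto g\tau g^{-1}$ of $\mathcal{R}$. For the $W$-Laplacian of Defn.~\ref{Defn: W-Laplacian matrix}, the same substitution together with the identity $\rho^{}_V(g\tau g^{-1}) = g\,\rho^{}_V(\tau)\,g^{-1}$ (valid since $g\in GL(V)$ acts tautologically on $V$) gives
$$L_W^{{}^gT}(\bm\omega) \;=\; \sum_{\tau\in\mathcal{R}} \mathbf{w}^{}_T(g\tau g^{-1})\bigl(\mathbf{I}_n - \rho^{}_V(\tau)\bigr) \;=\; g^{-1}\, L_W^T(\bm\omega)\, g,$$
so the two matrices are conjugate in $GL(V)$ and share the same multiset of eigenvalues $\{\lambda_i\}$. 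Combined with the invariance of $\mathbf{w}^{}_T(\mathcal{R})$, this gives invariance of the right-hand side of~\eqref{eq:mainThm}.

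There is no genuine obstacle: the proposition is really an expression of the naturality of each ingredient of~\eqref{eq:mainThm} (reflections, Coxeter elements, parabolic subgroups, the reflection representation) under the $N(W)$-action. The only mild subtlety is that a single Coxeter element $c$ is generally not $g$-fixed, but this is harmless because $\fac_W^T$ sums over all of $\mathcal{C}$ and $g$ merely permutes that set.
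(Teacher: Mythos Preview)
Your proof is correct and follows essentially the same route as the paper: establish the weight relation $\mathbf{w}^{}_{{}^gT}(\tau)=\mathbf{w}^{}_T(g\tau g^{-1})$, use the conjugation bijection on factorisations for the left-hand side, and show $L_W^{{}^gT}(\bm\omega)=g^{-1}L_W^T(\bm\omega)g$ for the right-hand side. Your write-up is in fact slightly more detailed than the paper's---you spell out why conjugation preserves the Coxeter class via regularity, explicitly note the invariance of $\mathbf{w}^{}_T(\mathcal{R})$, and flag the (genuine) subtlety that summing over all of $\mathcal{C}$ is what makes the left-hand side invariant---but the argument is the same.
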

\begin{proof}
One should first notice that for an element $g\in N(W)$, the two weight systems $\mathbf{w}^{}_T$ and $\mathbf{w}^{}_{^gT}$ as in \eqref{Eq: parabolic tower} determine each other via the equation $$\mathbf{w}^{}_T(\tau)=\mathbf{w}^{}_{^gT}(g^{-1}\tau g).$$ Indeed, conjugation by $g\in N(W)$ respects the set $\mathcal{R}$ of reflections and if for a reflection $\tau$ we have that $\tau\in W_i\setminus W_{i-1}$, then we also have that $g^{-1}\tau g\in\ \!\! ^gW_i\setminus\ \!\! ^gW_{i-1}$ and vice versa. 

As we have mentioned earlier, $\zeta$-regular elements (for a given $\zeta$) form a single conjugacy class \cite[Corol.~11.25]{lehrer-taylor-book} which means that conjugation by $g\in N(W)$ also respects the set of Coxeter elements. To see now the invariance of the left hand side of \eqref{eq:mainThm} notice that the contribution of a factorization $\tau_1\cdots \tau_{\ell}=c$ in $\mathcal{F}_W^T$ is the same as the contribution of $^g\tau_1\cdots\ \!\! ^g\tau_{\ell}=\ \!\! ^gc$ in $\mathcal{F}_W^{^gT}.$
Since this operation on factorizations is invertible and all such factorizations are considered under both towers, this shows that $\fac^T_W(t,\bm\omega)=\fac^{^gT}_W(t,\bm\omega)$. 

\noindent For the right hand side, it is easy to see by the Definition~\ref{Defn: W-Laplacian matrix} and the previous relation between the weight systems $\mathbf{w}^{}_T$ and $\mathbf{w}^{}_{^gT}$ that $$L^{^gT}_W(\bm\omega)=g^{-1}L_W^T(\bm\omega)g.$$ Indeed this means that the eigenvalues $\sigma_i(\bm\omega)$ of the (weighted) $W$-Laplacian are not altered, so neither is the product $\prod_{i=1}^n (1-e^{-t\sigma_i(\bm\omega)})$.
\end{proof}

Every real reflection group comes equipped with a set $S$ of standard generators and a presentation encoded in a Coxeter diagram, both determined by the chamber geometry. Analogous generators and Coxeter-like presentations have been defined \cite[Appendix~A]{broue-book-braid-groups} for all complex reflection groups $W$, albeit without having a uniform geometric interpretation (but see \cite{Bessis-Zariski} for a \emph{justification} of their existence). Any subgroup $W_I\leq W$ generated by a subset $I\subset S$ is the pointwise fixator of some flat of $\mathcal{L}_W$ and will be called a {\color{blue} \emph{standard parabolic subgroup}} of $W$. The following is well-known in the community but we elaborate on it here to correct a slight misstatement of the standard reference \cite[Fact~1.7]{BMR} (which has been noticed before at \cite[\S~3.7.2]{Wilson}).

\begin{lemma}\label{Lem: standard towers are enough}
For any complex reflection group $W$ different from $G_{27}$, $G_{29}$, $G_{33}$, and $G_{34}$, any parabolic tower is conjugate under $N(W)$ to a standard parabolic tower. For the remaining four groups, one has to consider an additional set of standard generators.
\end{lemma}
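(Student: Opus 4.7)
The plan is to prove the lemma by downward induction on rank, with the induction step furnished by a strengthened form of \cite[Fact~1.7]{BMR} classifying maximal parabolic subgroups up to conjugation.

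The key input I would establish first is: for any irreducible complex reflection group $W$ with the standard generating set $S$ of \cite[Appendix~2]{BMR}, augmented when $W \in \{G_{27}, G_{29}, G_{33}, G_{34}\}$, every maximal parabolic subgroup of $W$ is $W$-conjugate to a standard maximal parabolic $W_{S \setminus \{s\}}$ for some $s \in S$. For all irreducible groups outside the listed four, this is precisely \cite[Fact~1.7]{BMR}. For the four exceptional groups, the original statement requires correction because certain $W$-conjugacy classes of rank-$(n-1)$ parabolic subgroups are not realized by any $W_{S \setminus \{s\}}$ with $S$ as in \cite[Appendix~2]{BMR}. The correction, anticipated in \cite[\S~3.7.2]{Wilson}, is to augment $S$ by adjoining one or more alternative generating sets that realize the missing classes.

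Granting this, the induction proceeds as follows. Given a parabolic tower $T = (\{\mathbbl{1}\} = W_0 < W_1 < \cdots < W_n = W)$, apply the input to find $g_1 \in W \subset N(W)$ with ${}^{g_1} W_{n-1} = W_{I_1}$ for some $I_1 = S \setminus \{s_1\}$. After replacing $T$ by ${}^{g_1} T$, we may assume that $W_{n-1} = W_{I_1}$ is already a standard maximal parabolic. The truncated tower $T' = (W_0 < \cdots < W_{n-1})$ is then itself a parabolic tower of the reflection group $W_{I_1}$, whose own canonical generating set is $I_1$ (augmented where appropriate). By induction on rank, there exists $g_2 \in N(W_{I_1})$ such that ${}^{g_2} T'$ is standard with respect to $I_1$. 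The remaining point is that $g_2 \in N_{GL(V')}(W_{I_1})$, where $V'$ is the ambient space of $W_{I_1}$, need not a priori lie in $N(W)$. To resolve this, decompose $V = V' \oplus V^{W_{I_1}}$ and take the lift of $g_2$ acting as the given element on $V'$ and as the identity on $V^{W_{I_1}}$. This lift preserves the pointwise stabilizer of $V^{W_{I_1}}$ in $W$, which is exactly $W_{I_1}$, and hence normalizes $W$; moreover it fixes $W_{I_1}$ as a subgroup, so the already-standardized top of the tower is not disturbed. Since every standard parabolic of $W_{I_1}$ is of the form $W_J$ for some $J \subset I_1 \subset S$ and so is also a standard parabolic of $W$, the induction closes.

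The main obstacle is verifying the input fact for the four exceptional groups $G_{27}, G_{29}, G_{33}, G_{34}$: a case-by-case check using the classification of parabolic subgroups (cf.~\cite[\S~3.7.2]{Wilson}) is required to identify the exotic conjugacy classes not represented by any $W_{S \setminus \{s\}}$ for the canonical $S$, and to exhibit the supplementary generating sets that bring these classes into standard form. The resulting enriched set of generators is exactly the "additional set of standard generators" appearing in the lemma's statement; its precise specification for each of the four groups is a finite calculation that I would defer to the tables in the references above.
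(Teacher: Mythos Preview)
Your inductive strategy is the same as the paper's, and the key input (maximal parabolics are conjugate to standard ones) is also what the paper uses. However, there is a genuine gap in your lifting step.

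You claim that the lift $\tilde g_2$ of $g_2\in N_{GL(V')}(W_{I_1})$, acting as the identity on $V^{W_{I_1}}$, normalizes $W$ because it ``preserves the pointwise stabilizer of $V^{W_{I_1}}$ in $W$, which is exactly $W_{I_1}$.'' This is a non sequitur: normalizing $W_{I_1}$ is not the same as normalizing $W$. A concrete counterexample: take $W=\mathfrak S_3$ acting on $\mathbb C^2$ as the symmetries of a triangle, $W_{I_1}$ the reflection across the $x$-axis, $V'$ the $y$-axis. Any nontrivial scalar $g_2\in GL(V')$ normalizes $W_{I_1}$, but the lift $\tilde g_2=\mathrm{diag}(1,\lambda)$ sends the $60^\circ$ reflection line to a line that is no longer a mirror of $\mathfrak S_3$, so $\tilde g_2\notin N(\mathfrak S_3)$.

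The paper avoids this problem by never invoking an abstract normalizer at the inductive step. For real groups and for all exceptional groups (using the second generating set for $G_{27},G_{29},G_{33},G_{34}$), the conjugating element at each step can be taken in $W$ itself, and an element of $W_{I_1}\subset W$ trivially lies in $N(W)$. For the infinite family $G(r,p,n)$, $W$-conjugation genuinely fails---a point you overlook---but the paper observes that the required conjugator can always be taken in $G(r,1,n)$, and $G(r,1,n)\leq N(G(r,p,n))$; moreover the maximal parabolics of $G(r,p,n)$ are of type $G(r,p,a)\times\mathfrak S_b$, so the same mechanism is available recursively. You should replace the abstract lift by this explicit bookkeeping.
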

\begin{proof}
In most cases it is sufficient to consider conjugation under $W$ and in that setting it is easy to set up an inductive proof. That is, one first shows that any \emph{maximal-rank} parabolic subgroup is $W$-conjugate to a standard one and then proceeds by induction. In particular, this is true for real reflection groups where any parabolic subgroup is conjugate to a standard parabolic subgroup \cite[Prop.~1.15]{HUM}; this argument appears also in \cite[Prop.~2.4]{Matthieu}.
  
The situation is the same for all exceptional groups as well, if one considers two systems of standard generators for the four groups listed in the statement; this has been checked via SAGE-CHEVIE. Only for the infinite family $G(r,p,n)$, $W$-conjugation is no longer sufficient. The flats of the corresponding arrangements are described in \cite[\S~6.4]{OT} and one can see that to conjugate a maximal-rank parabolic subgroup of $G(r,p,n)$ to a standard one, you might need to use an element of $G(r,1,n)$. Since $G(r,1,n)\leq N\big( G(r,p,n)\big)$, this easily completes the argument.
\end{proof}

\subsection{Computing \texorpdfstring{$\fac^T_W(t,\bm\omega)$}{$\fac^T_W(t,\omega)$} via SAGE-CHEVIE and verifying Thm.~\ref{thm:main}}

The exponential generating function $\fac^T_W(t,\bm\omega)$ as given in \eqref{EQ: Defn of FAC, weighted, general} can be computed after the techniques of \S\ref{Section:weight function by group tower} as the following finite sum of exponentials (on which we elaborate below):
\begin{equation}
\fac_W^T(t,\bm\omega)=\dfrac{e^{t\mathbf{w}_T^{}(\mathcal{R})}}{h}\cdot\sum_{\chi\in\widehat{W}}\chi(c^{-1})\sum_{\towerchi\in \op{Res}_T( \chi)}\op{mult}(\towerchi)\cdot\op{exp}\Big( -t\cdot\sum_{i=1}^n(c_{\chi_i}-c_{\chi_{i-1}})\cdot\omega_i\Big).\label{Eq: F_W^T final}
\end{equation}As the reader may recall, the set $\op{Res}_T(\chi)$ consists of those tuples of irreducible characters that may appear as we restrict the character $\chi\in\widehat{W}$ down the tower $T$; $\op{mult}(\towerchi)$ counts their multiplicity. The numbers $c_{\chi_i}$ are the Coxeter numbers of the characters $\chi_i$ of $W_i$ and \eqref{Eq: F_W^T final} is just a rewriting of Corol.~\ref{Corol: FAC^(A,P)_(G,c) final} for $G=W$, using further that $|W|/|\mathcal{C}|=h$ and the Defn.~\ref{Defn: Coxeter numbers} of Coxeter numbers.

Each of the components of the above formula can be computed via SAGE \cite{sagemath} and its interface to CHEVIE \cite{chevie-michel,chevie-all}. Indeed, CHEVIE has hard-coded realizations of all exceptional complex reflection groups (as permutation groups on a set of roots) along with a set of standard generators as in \cite[Appendix~A]{broue-book-braid-groups}. Following that, each subgroup $W_i$ in a tower $T$ may be specified by a list of the reflections that generate it and CHEVIE can enumerate all irreducible characters $\chi\in\widehat{W}$ and restrict any character to any subgroup (also recording the multiplicity). 

Similarly, after computing $\op{Res}_T^{}(V_{ref})$ for the reflection representation $V_{ref}$, SAGE can produce the eigenvalues of the $W$-Laplacian matrix $L_W^T(\bm\omega)$ after \eqref{Eq: Spec_chi(A(w)) description} and recalling \eqref{Eq: B(w)=wt(R)-A(w)}. Now, one has two finite sums of exponentials in the two sides of equation \eqref{eq:mainThm} and we can immediately check that they agree. In fact, although this is not really necessary, Prop.~\ref{Prop: coxeter numbers are integers} guarantees that the exponents will be (positive) integer sums of the $\omega_i$'s so we can turn this to a required equality between two \emph{polynomials} on variables $z_i:=e^{-t\omega_i}$.

Notice that after Prop.~\ref{Prop: Thm invariant under N(W)-orbits} and Lemma~\ref{Lem: standard towers are enough}, we had to verify Thm.~\ref{thm:main} only for standard parabolic towers (and for two sets of standard generators for the four groups of the Lemma). That is, we needed to check at most $2\cdot n!$ towers for each well generated complex reflection group $W$ of rank $n$. The number of all possible parabolic towers on the other hand is much greater. For instance, for real $W$, it is given by $|W|\cdot n!/2^n$ as the next proposition (see also \cite{Drew}) shows. This formula doesn't generalize to the complex types but the gain there is comparable to the real cases.

It took about 24h (but 20 cpu days) to prove Thm.~\ref{thm:main} for $E_8$ by checking all standard parabolic towers. It would have taken 7560 (real) \emph{years} to verify it for arbitrary parabolic towers with our current computing capacity.

\begin{proposition}\label{Prop: num of chains face lattice}
If $\mathcal{X}$ is a simplicial, central hyperplane arrangement of rank $n$ and $Q$ denotes its set of chambers, then there exist $\big(|Q|\cdot n!\big)/2^n$ maximal chains in its intersection lattice $L_{\mathcal{X}}$. 
\end{proposition}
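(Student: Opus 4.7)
The plan is induction on the rank $n$, partitioning maximal chains according to their first proper flat.

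For the base case $n=1$, a simplicial central arrangement of rank one consists of a single hyperplane $\{0\} \subset V$ with $|Q|=2$ rays and the unique maximal chain $V \supset \{0\}$, matching $|Q|\cdot n!/2^n = 1$.

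For the inductive step, I would partition maximal chains of $L_{\mathcal{X}}$ according to their first proper flat $X_1$, which must be a hyperplane $H \in \mathcal{X}$. The chains extending $V \supset H$ are in bijection with maximal chains of the intersection lattice $L_{\mathcal{X}^H}$ of the restriction arrangement $\mathcal{X}^H := \{H \cap H' : H' \in \mathcal{X},\ H' \neq H\}$ viewed inside $H$. The essential geometric observation is that $\mathcal{X}^H$ is again a simplicial central arrangement, now of rank $n-1$: centrality is obvious, and simpliciality follows because the chambers of $\mathcal{X}^H$ are precisely the codimension-one faces (walls) of chambers of $\mathcal{X}$ that lie on $H$, and any face of a simplicial cone is a simplicial cone. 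Applying the induction hypothesis to each $\mathcal{X}^H$ then gives $|Q^H| \cdot (n-1)!/2^{n-1}$ chains per $H$, for a total of
$$\sum_{H \in \mathcal{X}} |Q^H| \cdot \frac{(n-1)!}{2^{n-1}}.$$

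To finish, one needs the identity $\sum_{H \in \mathcal{X}} |Q^H| = |Q| \cdot n/2$. This I would prove by a standard double-count of walls: each chamber is an open simplicial cone of dimension $n$ and thus has exactly $n$ walls, each wall lies on a unique hyperplane and separates exactly two chambers, and the chambers of $\mathcal{X}^H$ are precisely the walls of $\mathcal{X}$ lying on $H$. Substituting into the display above yields the desired $|Q| \cdot n!/2^n$.

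The only nontrivial geometric input is the closure of the ``simplicial central'' class under restriction to a hyperplane, which reduces to the elementary fact that faces of simplicial cones are simplicial. Everything else is a routine bookkeeping of the induction together with the wall double-count.
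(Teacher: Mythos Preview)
Your inductive argument is correct and complete: the restriction $\mathcal{X}^H$ is indeed simplicial because its chambers are facets of simplicial cones, the interval above $H$ in $L_{\mathcal{X}}$ is canonically $L_{\mathcal{X}^H}$, and the wall double-count $\sum_H |Q^H| = |Q|\cdot n/2$ closes the induction cleanly.

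Your route is genuinely different from the paper's. The paper does not induct; instead it counts maximal chains in the \emph{face} lattice $F_{\mathcal{X}}$ via the first barycentric subdivision (each of the $|Q|$ simplicial chambers contributes $n!$ top-dimensional simplices, and these are in bijection with maximal flags of faces), and then invokes a theorem of Bayer and Sturmfels asserting that the natural projection from maximal chains in $F_{\mathcal{X}}$ to maximal chains in $L_{\mathcal{X}}$ has fibers of constant size $2^n$. The paper's proof is shorter but imports a nontrivial external result; yours is self-contained and more elementary, relying only on the stability of the simplicial property under restriction and a direct facet count. The paper's approach has the conceptual advantage of explaining the factor $2^n$ geometrically (as the number of sign choices for a flag of faces lying over a given flag of flats), whereas in your argument the $2^n$ emerges one factor of $2$ at a time through the induction.
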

\begin{proof}
Since the arrangement is simplicial, there will be $|Q|\cdot n!$ simplices in its first barycentric subdivision. These are in bijection with the set of maximal chains (flags) in the {\it face} lattice $F_{\mathcal{X}}$ of the arrangement. Finally, by a theorem of Bayer and Sturmfels (see for instance \cite[Thm.~2.3]{ERS}), the natural map from chains in $F_{\mathcal{X}}$ to chains in $L_{\mathcal{X}}$ has fibers of constant size $2^n$ when the chains are maximal.  
\end{proof}

\begin{remark}[Reflection automorphisms and generalized Coxeter elements]\label{Remark: refl autom. and gen. Cox elts}\ \newline
Our Thm.~\ref{thm:main} covers (as is) the case of general Coxeter elements (of Remark~\ref{Rem: general defn Cox elts}), which are related to our usual Coxeter elements via reflection automorphisms of $W$ \cite{RRS}. Indeed, such reflection automorphisms preserve towers of parabolic subgroups (a parabolic subgroup is always maximal in any set of reflection subgroups of fixed rank). Then, the argument proceeds as in Prop.~\ref{Prop: Thm invariant under N(W)-orbits}.
\end{remark}

\section{Combinatorial description of the eigenvalues of the $W$-Laplacian}
\label{sec:combinatorial}

In this section, we give a combinatorial algorithm to compute the spectrum of the $T$-weighted $W$-Laplacian $L^T_W(\bm\omega)$ relying only on the Coxeter-theoretic data of the tower $T$. As opposed to the Burman-Zvonkine theorem (that however allows arbitrary weights) the eigenvalues of the $W$-Laplacian are always non-negative integer sums of the weights $\omega_i$. 

The $W$-Laplacian $L^T_W(\bm\omega)$ belongs to the algebra $\CC[\bm J_T]$ of generalized Jucys-Murphy elements and we have shown in sections~\ref{Section: weight function gives commutative subalgebra}~and~\ref{Section:weight function by group tower} how to compute the spectrum of such objects. Indeed, recalling the relation (Defn.~\ref{Defn: group-algebra-Laplacian}) between $L^T_W(\bm\omega)$ and the \emph{group algebra} Laplacian $\mathbf{B}^T(\bm\omega)$, we have after \eqref{Eq: B(w)=wt(R)-A(w)} and the spectral description \eqref{Eq: Spec_chi(A(w)) description} that \begin{equation}
\op{Spec}\big(L_W^T(\bm\omega)\big)=\Big\{ \sum_{i\geq 1}\omega_i\cdot (c^{}_{\chi_i}-c^{}_{\chi_{i-1}})\quad |\ \towerchi\in\op{Res}_T(V_{ref})\Big\}.\label{EQ: Spec(L_W^T(w)) via c_chi}
\end{equation}

We have to compute first the Bratteli diagram for the tower $T$ under the reflection representation $V_{ref}$ of $W$. To do that, notice that the restriction on a parabolic subgroup $W_X$ decomposes as a sum of reflection and trivial representations $$\op{Res}^W_{W_X}(V_{ref})=\chi_{\op{ref}(W_{X_1})}+\cdots +\chi_{\op{ref}(W_{X_s})}+\op{codim}(X)\cdot \op{triv},$$ where the $W_{X_i}$'s are the irreducible components of $W_X$ (that is, $W_X=W_{X_1}\times\cdots\times W_{X_s}$ and they are all parabolic subgroups of $W$). Now, each parabolic subgroup $W_{X'}\leq W_X$ determines also parabolic (but not always proper) subgroups of each of the components $W_{X_i}$ of $W_X$. In particular, the new flats satisfy $X'_{i}\supset X_i$ and this means that if $W_{X'}$ is of maximal rank in $W_X$, then there is a unique flat $X_i$ such that $X'_i\neq X_i$. 

This means that for a tower $T=(W_i)_{i=1}^n$, all tuples $\towerchi$ of characters that appear in $\op{Res}_T(V_{ref})$ consist of reflection representations of irreducible subgroups of the $W_i$. To be more precise, $T$ determines a sequence $(G_j)_{j=1}^n$ of \emph{irreducible} subgroups of $W$, such that $G_j$ is the unique component of $W_j$ not contained in $W_{j-1}$ (see the example of \eqref{EQ: Example of eigenvalues}). Moreover, each pair $(G_j,W_i)$ (with $i\geq j$) determines a subgroup $G^i_j\leq W$ as the unique irreducible component of $W_i$ that contains $G_j$. Now, the tuples $\towerchi$ are given precisely by $$\op{Res}_T(V_{ref})=\Big\{ (\underbrace{\op{triv},\cdots,\op{triv}}_{\op{rank}(G_j)\text{-times}},\op{ref}(G_j),\op{ref}(G_j^{j+1}),\cdots, \op{ref}(G_j^n)=V)\quad |\ j=1\dots n\Big\}.$$  
In particular all numbers $c_{\chi_i}$ that appear in \eqref{EQ: Spec(L_W^T(w)) via c_chi} are \emph{usual} Coxeter numbers, as in the definition \eqref{Eq: Coxeter number h}. We have now proven the following combinatorial interpretation for the eigenvalues of the (weighted) $W$-Laplacian.

\begin{proposition}\label{Prop: comb. descr. of Spec of L_W^T(w)} 
With notation as above, if we write $h^T_{j,i}$ for the Coxeter numbers of the irreducible reflection groups $G_j^i$ determined by $T$, then the eigenvalues of the $W$-Laplacian $L_W^T(\bm\omega)$ \mbox{are given by}
$$\op{Spec}\big(L_W^T(\bm\omega)\big)=\Big\{ \sum_{i\geq j} \omega_i\cdot (h^T_{j,i}-h^T_{j-1,i})\quad  |\ j=1\dots n\Big\}.$$
\end{proposition}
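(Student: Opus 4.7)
The plan is to unpack the description of $\op{Res}_T(V_{ref})$ already outlined in the paragraphs preceding the statement, and then substitute into the spectral formula \eqref{EQ: Spec(L_W^T(w)) via c_chi}. Everything is essentially laid out; what remains is to verify the parametrization of the tuples $\towerchi$ and to evaluate the Coxeter numbers $c_{\chi_i}$ explicitly.

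First I would establish the branching rule for $V_{ref}$ down a parabolic: for $W_X = W_{X_1}\times\cdots\times W_{X_s}$ one has $\op{Res}^W_{W_X}(V_{ref}) = \bigoplus_k \op{ref}(W_{X_k}) \oplus \op{codim}(X)\cdot \op{triv}$. This comes from decomposing $V = X \oplus \bigoplus_k V_k$, where $V_k$ is the (nonzero) moving space of $W_{X_k}$; each factor acts as its reflection representation on its own $V_k$, trivially on the other $V_\ell$'s, and trivially on $X$. Iterating this branching along the tower and recalling that each $W_{i-1}$ is a maximal parabolic of $W_i$ (codimension drops by one), exactly one irreducible component of $W_i$ — the component $G_{j(i)}^i$ in the notation of the excerpt — fails to be contained in $W_{i-1}$, and it is in this component that the ``new'' piece of the reflection character must sit.

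Next I would read off the tuples. A tuple $\towerchi \in \op{Res}_T(V_{ref})$ amounts, at each level $i$, to picking one irreducible summand of $\op{Res}^{W_{i+1}}_{W_i}\chi_{i+1}$. The branching rule above means this choice is a binary one at the level where the reflection character first ``detaches'' from the trivial summands; once it is attached to some $G_j$, it is forced to remain on the growing chain $G_j \subset G_j^{j+1}\subset \cdots \subset G_j^n$, picking up the reflection character of each enlargement. Thus each tuple is indexed by a single integer $j\in\{1,\dots,n\}$, and reads
\[
\chi_i = \begin{cases} \op{triv} + (i-1)\op{triv}, & i<j, \\ \op{ref}(G_j^i) + (\text{trivials on other components}), & i\geq j. \end{cases}
\]
A quick dimension check ($n$ tuples, each contributing multiplicity one, totaling $\dim V_{ref}=n$) confirms that this exhausts $\op{Res}_T(V_{ref})$.

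The last step is to compute the Coxeter numbers. For $i<j$, $\chi_i$ is trivial so $c_{\chi_i}=0$. For $i\geq j$, since all reflections of $W_i$ outside the component containing $G_j$ act trivially on the relevant $V_k$ (they act as the identity there), one gets $c_{\chi_i} = c_{\op{ref}(G_j^i)} = h(G_j^i) = h^T_{j,i}$ by Proposition~\ref{Prop: L_W has all eigenvalues =h} applied to the irreducible group $G_j^i$. With the convention $h^T_{j-1,j-1}=0$, substituting into \eqref{EQ: Spec(L_W^T(w)) via c_chi} yields
\[
\sum_{i=1}^n \omega_i (c_{\chi_i}-c_{\chi_{i-1}}) = \sum_{i\geq j} \omega_i (h^T_{j,i}-h^T_{j-1,i}),
\]
which is precisely the claim. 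The main obstacle, though largely bookkeeping, is ensuring the identification of ``which irreducible component is growing at step $i$'' is coherent across the whole tower, so that the parametrization by $j$ is well-defined — this is exactly where the hypothesis that $T$ is a maximal chain of parabolics (each $W_{i-1}\subset W_i$ of codimension one) does the real work.
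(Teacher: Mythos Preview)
Your argument is correct and is precisely the one the paper gives in the paragraphs immediately preceding the proposition: compute the branching of $V_{\mathrm{ref}}$ down the tower, parametrize the $n$ tuples $\towerchi$ by the level $j$ at which the trivial character first becomes a reflection character, identify each $c_{\chi_i}$ with the ordinary Coxeter number $h(G_j^i)$, and substitute into~\eqref{EQ: Spec(L_W^T(w)) via c_chi}. One small slip in your last display: from $c_{\chi_i}=h^T_{j,i}$ you get $c_{\chi_i}-c_{\chi_{i-1}}=h^T_{j,i}-h^T_{j,i-1}$ (second index drops), with the natural convention $h^T_{j,j-1}=0$; the subscript $h^T_{j-1,i}$ and the convention $h^T_{j-1,j-1}=0$ that you wrote do not follow from your own computation (and indeed $h^T_{j-1,j-1}=h(G_{j-1})$ is generally nonzero). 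This appears to be inherited from the paper's own indexing in the displayed statement, but your derivation makes clear what is meant.
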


\subsection{A practical algorithm}

The above discussion leads to a practical algorithm to compute the eigenvalues of the $W$-Laplacian $L_W^T(w)$ for a given tower $T$. Indeed, Proposition~\ref{Prop: comb. descr. of Spec of L_W^T(w)} shows that it suffices to 
compute the $n\times n$, upper-triangular, integer matrix $H=(h^T_{j,i}-h^T_{j-1,i})_{1\leq j\leq i \leq n}$.

For practical applications, consider the case of a tower $T$ of \emph{standard} parabolic subgroups, specified by an ordering of the generators of $W$. That is, $W=\langle s_1,\dots,s_n\rangle$ and  $
W_i = \langle s_1,\dots, s_i \rangle$ for $i\in \{1,\dots,n\}$.
For each $i\in \{1,\dots,n\}$, we let as above $G_i$ be the irreducible component of $W_i$ containing $s_i$. To compute the matrix $H$, we fill  iteratively its columns from left to right as follows:
For each $j\in \{1,\dots,n\}$, and for each $i\in \{1,\dots,n\}$, if $j\leq i$ and $s_j$ does belongs to $G_i$, we choose the value of $H_{i,j}$ such that the partial sum of the $i$-th row up to this entry equals the Coxeter number of $G_i$. For other values of $j$, we set $H_{i,j}=0$.

To see that this algorithm indeed computes the wanted matrix $H$, just notice that the partial sum up to $i$ of the quantity $(h^T_{j,i}-h^T_{j-1,i})$ is equal to the Coxeter numer $h^T_{i,i}$ of $G_i$. We observe that in practice, specifying an ordering of the generators amounts to numbering the vertices of the Dynkin diagram of $W$, or equivalently  constructing an increasing sequence of Dynkin diagrams in which vertices are added one by one. Irreducible components in this setting correspond to connected components, which make the above algorithm very practical to run by hand, as we now illustrate.

\subsection{An example in detail}

%A tower $T$ of \emph{standard} parabolic subgroups may be specified by an ordering of the   generators of $W$. In that setting, it is easy to determine the groups $G_j^i$ and thus the Coxeter numbers $h^T_{j,i}$ that appear in Prop.~\ref{Prop: comb. descr. of Spec of L_W^T(w)}.
We present here in detail the situation for a tower $T$ of $D_6$, whose groups $W_i$ are described by the labels of the columns in \eqref{EQ: Example of eigenvalues}; they have isomorphism types $$\big(\{\mathbbl{1}\},A_1,A_2,A_2\times A_1,A_3\times A_1, A_3\times A_2,D_6\big).$$

Each group $W_i$ is determined by a subdiagram of $D_6$, and the irreducible subgroup $G_i\leq W_i$ corresponds to the circled connected component. Notice that this is exactly the component that contains the new vertex added in the $i$-th column. The matrix $H$ defined above is shown in~\eqref{EQ: Example of eigenvalues}.
%whose $(j,i)$-entries are the (non-negative) integers $Hh^T_{j,i}-h^T_{j-1,i}$. 
Each row corresponds to a vertex of the diagram and each column to an ``active" component $G_i$. The numbers of the $i$-th column are such so that the row-sums \emph{up to that column and only for those rows that correspond to the vertices of $G_i$} equal the coxeter number of $G_i$.

So, for example, in the fourth column of \eqref{EQ: Example of eigenvalues} the active component $G_4$ is of type $A_3$ with Coxeter number $4$. The three vertices of its type-$A_3$ diagram correspond to rows $1,2,$ and $4$ (which is when those vertices were added). The rows $1$ and $2$ already sum to the number $3$, so the entries $(1,4)$ and $(2,4)$ are both $1$. The new entry $(4,4)$ is $4$ and row $3$ gets a zero entry since the connected component containing the third added vertex is unaffected in this column. 

\newcommand{\myPic}[1]{\underline{\begin{minipage}[b]{16mm}\begin{center}{\includegraphics{#1}}\end{center}\end{minipage}}}

\begin{equation}
  \begin{blockarray}{c@{\hspace{20pt}}cccccc@{\hspace{20pt}}}
	  & \matindex{\myPic{./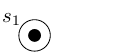}} & \matindex{\myPic{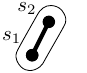}} & \matindex{\myPic{./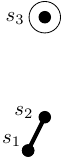}} & \matindex{\myPic{./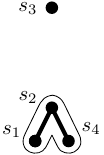}} & \matindex{\myPic{./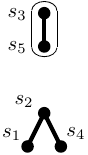}} & \matindex{\myPic{./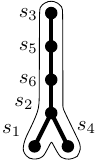}} \\
    \begin{block}{[c@{\hspace{20pt}}cccccc@{\hspace{20pt}}]}
      & {2} &  {1} & {0} & {1} & {0} & {6} &  \matindex{\ $\lambda_1$} \\  
      & {0} &  {3} & {0} & {1} & {0} & {6} &  \matindex{\ $\lambda_2$} \\  
      & {0} &  {0} & {2} & {0} & {1} & {7} &  \matindex{\ $\lambda_3$} \\  
      & {0} &  {0} & {0} & {4} & {0} & {6} &  \matindex{\ $\lambda_4$} \\  
      & {0} &  {0} & {0} & {0} & {3} & {7} &  \matindex{\ $\lambda_5$} \\  
      & {0} &  {0} & {0} & {0} & {0} & {10} & \matindex{\ $\lambda_6$} \\  
  \end{block}
  \end{blockarray}\label{EQ: Example of eigenvalues}
\end{equation}

In such a matrix then, each column corresponds to the (generalized) Jucys-Murphy element $J_i$ and hence the weight $\omega_i$, and each row to an irreducible subgroup $G_j$ and hence to an eigenvalue $\lambda_j(\bm\omega)$ of the $W$-Laplacian. For our case here, the eigenvalues will be given by
$$\{\lambda_j(\bm \omega)\quad |\ j=1\dots n \}=\begin{Bmatrix} 2\omega_1+\omega_2+\omega_4+6\omega_6,\\ 3\omega_2+\omega_4 +6\omega_6,\\2\omega_3+\omega_5 + 7\omega_6,\\ 4\omega_4+6\omega_6,\\ 3\omega_5+7\omega_6,\\ 10\omega_6 \end{Bmatrix}.$$

\section{Lie-like elements and equivalence of Theorems~\ref{thm:main} and~\ref{thm:equivalence}}
\label{sec:LieLike}

\subsection{Lie-like elements}
\label{subsec:LieLike}

Theorem~\ref{thm:equivalence} relates the virtual characters appearing in the Frobenius formula to the exterior powers $\bigwedge^k(V)$ of the reflection representation $V$. Burman and Zvonkine~\cite{BZ} were the first to observe, in the context of type $A_n$, that reflections give rise to ``Lie-like elements'' on those exterior powers. We now quickly recall this notion, which was formalized in a later manuscript of Burman~\cite{Burman} (see also the more recent \cite{burman2020lie} where further relations with matrix tree theorems are produced).

\begin{definition}[Lie-like element, \cite{BZ, Burman}]
	Let $W$ be a group, let $x \in \mathbb{C}[W]$ be an element of the group algebra, and let $U$ be a representation of $W$. We say that $x$ is a \emph{Lie-like element for the representation $U$} if for any $k \geq 0$, and for any element $v_1\wedge \dots \wedge v_k  \in \bigwedge^k U$, we have
\begin{align}
	\label{eq:LieLike}
	x \cdot (v_1 \wedge \dots \wedge v_k) = \sum_{i=1}^k v_1 \wedge \dots \wedge (x \cdot v_i) \wedge \dots \wedge v_k.
\end{align}
\end{definition}
To prevent any confusion, we stress that in the left-hand side of~\eqref{eq:LieLike} we use the classical action of the group algebra $\mathbb{C}[W]$ on $\bigwedge^k U$, given by
	$$g \cdot (v_1 \wedge \dots \wedge v_k) =(g \cdot v_1) \wedge \dots \wedge (g\cdot v_k) $$
	if $g \in W$ is an element of the group, and extended \emph{linearly} to the group algebra $\mathbb{C}[W]$. In particular, the action of some $x\in \CC[W]$ on $\bigwedge^k U$ and the action of the \emph{matrix} $\rho^{}_U(x)\in\op{End}(U)$ on $\bigwedge^k U$ are often different. Note that it is clear that Lie-like elements on $U$ form a vector space (for interesting questions related to this vector space, which is in fact a Lie-algebra, see~\cite{Burman}).

	The following three statements are straighforward adaptations of corresponding statements for type $A_n$  in~\cite{BZ}. We state them not only for general reflection groups, but for the more general situation of an arbitrary representation $U$. This will be useful for the complements given in Section~\ref{Sec: product formulas}.
\begin{lemma}[{\cite[Proposition~2.2]{BZ}}]
	Let $G$ be a group, $U$ a representation of $G$, and let $g \in G$ such that $g$ acts as a pseudo-reflection on $U$. Then the element $\mathbbl{1}-g\in \mathbb{C}[G]$ is Lie-like for $U$.
\end{lemma}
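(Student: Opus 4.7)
The plan is to reduce the identity to a direct computation on pure wedge products, using the very restrictive structure of a pseudo-reflection. Both sides of~\eqref{eq:LieLike} are multilinear and alternating in $v_1,\dots,v_k$, so by linearity and standard reductions I only need to verify the identity for pure wedges $v_1\wedge \dots\wedge v_k$. Also the case $k=1$ is a tautology, and when $k$ exceeds $\dim U$ both sides vanish, so I may assume $k\geq 2$.

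Next I would exploit the pseudo-reflection decomposition. Since $g$ acts as a pseudo-reflection on $U$, there is a $g$-stable splitting $U=H\oplus L$ where $H=\ker(\mathbbl{1}-g)$ is a hyperplane fixed pointwise by $g$, and $L=\mathrm{Im}(\mathbbl{1}-g)$ is a line on which $g$ acts by some scalar $\zeta\neq 1$. Pick a generator $\ell$ of $L$ and write each $v_i=h_i+\alpha_i\ell$ with $h_i\in H$ and $\alpha_i\in\mathbb{C}$; then
\[
(\mathbbl{1}-g)\cdot v_i=(1-\zeta)\,\alpha_i\,\ell.
\]

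Now I would expand both sides using multilinearity of the wedge and the crucial observation that, because $L$ is one-dimensional, any wedge containing $\ell$ in two distinct slots vanishes. On the left-hand side,
\[
v_1\wedge\dots\wedge v_k=h_1\wedge\dots\wedge h_k+\sum_{i=1}^k \alpha_i\,h_1\wedge\dots\wedge h_{i-1}\wedge\ell\wedge h_{i+1}\wedge\dots\wedge h_k,
\]
and the analogous expansion for $(g\cdot v_1)\wedge\dots\wedge(g\cdot v_k)$ differs only in that each $\alpha_i$ gets replaced by $\zeta\alpha_i$ (the ``pure $H$'' term is invariant since $g$ fixes $H$). Subtracting yields
\[
(\mathbbl{1}-g)\cdot(v_1\wedge\dots\wedge v_k)=(1-\zeta)\sum_{i=1}^k \alpha_i\,h_1\wedge\dots\wedge\ell\wedge\dots\wedge h_k,
\]
with $\ell$ in the $i$-th slot. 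On the right-hand side, the $i$-th summand is $(1-\zeta)\alpha_i\,v_1\wedge\dots\wedge\ell\wedge\dots\wedge v_k$; expanding each remaining $v_j=h_j+\alpha_j\ell$ ($j\neq i$) by multilinearity, only the term where every $v_j$ is replaced by $h_j$ survives (all others contain a second $\ell$ and vanish). This produces exactly the same sum as above.

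There is no real obstacle: the only point requiring mild care is the bookkeeping of signs and positions in the wedge expansion, and the observation that $\ell\wedge\ell=0$ collapses the combinatorial expansions on both sides to a matching set of ``single-$\ell$'' terms. This matches the corresponding type-$A$ argument in~\cite{BZ} and extends it verbatim to any representation $U$ on which $g$ happens to act as a pseudo-reflection.
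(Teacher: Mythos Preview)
Your proof is correct and follows essentially the same approach as the paper's: both exploit the eigenspace decomposition $U=H\oplus L$ of the pseudo-reflection and the vanishing of wedges containing $\ell$ twice. The only cosmetic difference is that the paper checks the identity on the induced basis $x_{i_1}\wedge\dots\wedge x_{i_k}$ of $\bigwedge^k U$ (distinguishing whether $x_1$ appears or not), whereas you work with arbitrary pure wedges and invoke multilinearity explicitly; the underlying computation is identical.
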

\begin{proof}
	The proof of~\cite[Proposition~2.2]{BZ} extends trivially to our setting, we adapt it here for completeness. 
	Let $k \geq 1$ and let $n$ be the dimension of $U$. Since $g$ acts as a pseudo-reflection, we can choose a basis $(x_1,\dots,x_n)$ of $U$ such that $g\cdot x_1 = \alpha x_1$ for some root of unity $\alpha$, and $g \cdot x_i = x_i$ for $2 \leq i\leq n$.  It is a direct check that the two actions $\bigwedge^k U \longrightarrow \bigwedge^k U$ given by:
	\begin{align*}
		v_1 \wedge \dots \wedge v_k \longmapsto 
	(\mathbbl{1}-g) \cdot ( v_1 \wedge \dots \wedge v_k ) \ \mbox{ and } 
v_1 \wedge \dots \wedge v_k \longmapsto	
	\sum_{i=1}^k v_1 \wedge \dots \wedge ((\mathbbl{1}-g) \cdot v_i) \wedge \dots \wedge v_k
	\end{align*}
both act as follows, for integers $1<i_2,\dots, i_{k+1} \leq n$:
\begin{align*}
	x_1 \wedge x_{i_2} \wedge \dots \wedge x_{i_k} &\longmapsto
	(1-\alpha) x_1 \wedge x_{i_2} \wedge \dots \wedge x_{i_k}, \\ 
	x_{i_2} \wedge x_{i_3} \wedge \dots \wedge x_{i_{k+1}} &\longmapsto 0. 
\end{align*}
	We have thus checked the equality~\eqref{eq:LieLike} on a basis of $\bigwedge^k U $.
\end{proof}
The next two statements and their proofs are  adaptations of \cite[Proposition~2.4]{BZ} to our more general situation.
\begin{lemma}\label{lemma:LieLikeEigenvalues}
	Let $G$ be a group, $U$ a representation of $G$ of dimension $n$, and let $x \in \mathbb{C}[W]$ which is Lie-like for $U$.
Then for any $k\geq 1$, the  ${n \choose k}$ eigenvalues of $x$ on the representation $\bigwedge^k U$ are given by the sums
	$$
	\sigma_{i_1}+\sigma_{i_2}+\dots+\sigma_{i_k}
	$$
	for all  $1\leq i_1<i_2<\dots<i_k\leq n$, where $\sigma_1, \sigma_2, \dots, \sigma_n$ are the eigenvalues of $x$ in the representation~$U$.
\end{lemma}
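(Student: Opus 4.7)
The plan is to exploit the fact that the Lie-like property is precisely the Leibniz rule: on wedge products, $x$ acts like a derivation. Combined with triangularization, this immediately identifies the eigenvalues on exterior powers.

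First I would reduce to a convenient basis on $U$. Since $x \in \mathbb{C}[G]$ acts on $U$ as some endomorphism $\rho^{}_U(x)$, we can choose, by Schur's triangularization theorem, a basis $(e_1,\dots,e_n)$ of $U$ in which $\rho^{}_U(x)$ is upper triangular, with diagonal entries $\sigma_1,\dots,\sigma_n$ (the eigenvalues of $x$ on $U$, with multiplicity). Concretely, $x \cdot e_i = \sigma_i e_i + \sum_{j<i} c_{ji}\, e_j$ for some scalars $c_{ji}$.

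Next I would compute the action of $x$ on the standard basis $\{e_{i_1}\wedge \dots \wedge e_{i_k}\mid 1\leq i_1<\dots<i_k\leq n\}$ of $\bigwedge^k U$. Applying the Lie-like property~\eqref{eq:LieLike} and substituting the triangular formula for $x\cdot e_{i_j}$ gives
\begin{align*}
x\cdot(e_{i_1}\wedge\dots\wedge e_{i_k}) &= \sum_{j=1}^k e_{i_1}\wedge\dots\wedge(x\cdot e_{i_j})\wedge\dots\wedge e_{i_k} \\
&= \Bigl(\sum_{j=1}^k\sigma_{i_j}\Bigr)\, e_{i_1}\wedge\dots\wedge e_{i_k} + \sum_{j=1}^k \sum_{\ell<i_j} c_{\ell,i_j}\, e_{i_1}\wedge\dots\wedge e_\ell\wedge\dots\wedge e_{i_k}.
\end{align*}
Any nonzero term in the second sum, after reordering and sign adjustment, is a basis vector $e_{j_1}\wedge\dots\wedge e_{j_k}$ whose index tuple is strictly smaller than $(i_1,\dots,i_k)$ in the lexicographic (or reverse-lexicographic) order on increasing $k$-subsets of $\{1,\dots,n\}$.

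Therefore, if I order the basis of $\bigwedge^k U$ by this total order on strictly increasing $k$-tuples, the matrix of $x$ on $\bigwedge^k U$ is upper triangular with diagonal entry $\sigma_{i_1}+\dots+\sigma_{i_k}$ in position $(i_1,\dots,i_k)$. Reading the diagonal gives exactly the claimed multiset of $\binom{n}{k}$ eigenvalues, concluding the proof.

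I do not anticipate any real obstacle: the only subtle point is that the Lie-like property is a statement about the action of the element $x \in \mathbb{C}[G]$ on $\bigwedge^k U$ (not about the action of the matrix $\rho^{}_U(x)$), but this is exactly what is needed for the Leibniz-type computation above to apply when we expand $x\cdot e_{i_j}$ in the chosen basis.
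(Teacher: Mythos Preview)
Your proof is correct and follows essentially the same approach as the paper: the paper first treats the diagonalizable case (where the wedge of eigenvectors is directly an eigenvector), and then remarks that the non-diagonalizable case is handled identically by starting from an upper-triangular basis, which is exactly what you do. Your version simply skips the diagonalizable warm-up and gives a bit more detail on the triangular computation.
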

\begin{proof}
	Assume $x$ is diagonalisable over $U$, and let $(v_1, \dots, v_n)$ be a $U$-basis of eigenvectors of $x$ adapted to the eigenvalues $(\sigma_1,\dots,\sigma_n)$. Then for any $i_1<\dots<i_k$, the right-hand side of \eqref{eq:LieLike} shows that $v_{i_1}\wedge\dots \wedge v_{i_k} \in \bigwedge^k U$ is an eigenvector of $x$ of eigenvalue $\sigma_{i_1}+\sigma_{i_2}+\dots+\sigma_{i_k}$. This gives a complete basis of eigenvectors. If $x$ is not diagonalisable the argument is the same, starting from a $U$-basis in which $x$ is upper triangular and obtaining a basis of $\bigwedge^k U$ in which $x$ also is.
\end{proof}

Since Lie-like elements form a vector space, the two lemmas imply the following corollary which is a key element of the paper.
\begin{corollary}\label{cor:BZ}
	Let $W\subset\op{GL}(V)$ be a complex reflection group of rank $n$, equipped with an underlying weight function $\mathbf{w}_P$ as in Section~\ref{subsec:Laplacians}. Then the group algebra  Laplacian  $\mathbf{B}(\bm\omega)$
 is Lie-like for~$V$, and for $k\geq 1$ its ${n \choose k}$ eigenvalues  on the representation $\bigwedge^k V$ are given by the sums
	$$
	\sigma_{i_1}+\sigma_{i_2}+\dots+\sigma_{i_k}
	$$
	for all  $1\leq i_1<i_2<\dots<i_k\leq n$, where $\sigma_1, \sigma_2, \dots, \sigma_n$ are the eigenvalues  of the $W$-Laplacian $L_W(\bm\omega) \in  GL(V)$.
\end{corollary}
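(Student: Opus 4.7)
The plan is to deduce both claims directly by combining the two preceding lemmas with the observation that being Lie-like is closed under linear combinations.

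First I would unfold the definition (Defn.~\ref{Defn: group-algebra-Laplacian}) of the group algebra Laplacian, $\mathbf{B}(\bm\omega)=\sum_{\tau\in\mathcal{R}}\mathbf{w}(\tau)\cdot(\mathbbl{1}-\tau)$, and note that every $\tau\in\mathcal{R}$ acts as a (pseudo)reflection on the reflection representation $V$. The first lemma of the subsection then immediately gives that each summand $\mathbbl{1}-\tau$ is Lie-like for $V$. Since Lie-like elements for a fixed representation form a vector subspace of $\mathbb{C}[W]$ (as remarked right after the definition), the weighted sum $\mathbf{B}(\bm\omega)$ is Lie-like for $V$. This settles the first assertion.

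Next, to identify the eigenvalues on $\bigwedge^k V$, I would invoke Lemma~\ref{lemma:LieLikeEigenvalues} with $x=\mathbf{B}(\bm\omega)$ and $U=V$. The only point requiring a brief justification is that the eigenvalues of $\mathbf{B}(\bm\omega)$ \emph{acting through the group algebra} on $V$ are exactly the $\sigma_i$'s, i.e.\ the eigenvalues of the matrix $L_W(\bm\omega)\in GL(V)$. But this is immediate from the very definition of the $W$-Laplacian: since the action of $\mathbb{C}[W]$ on $V$ is given by $\rho^{}_V$, and since $\rho^{}_V(\mathbf{B}(\bm\omega))=L_W(\bm\omega)$ by Defn.~\ref{Defn: W-Laplacian matrix}, the two notions of spectrum coincide on $V$. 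Applying Lemma~\ref{lemma:LieLikeEigenvalues} then yields the $\binom{n}{k}$ sums $\sigma_{i_1}+\cdots+\sigma_{i_k}$ as the full spectrum of $\mathbf{B}(\bm\omega)$ on $\bigwedge^k V$.

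There is no substantial obstacle here: the proof is essentially a two-line assembly of the preceding lemmas, with the only mild subtlety being the distinction between the action of $\mathbf{B}(\bm\omega)\in\mathbb{C}[W]$ on $\bigwedge^k V$ (which is the object Lemma~\ref{lemma:LieLikeEigenvalues} addresses) and the action of the matrix $L_W(\bm\omega)=\rho^{}_V(\mathbf{B}(\bm\omega))$ on $V$ (which gives the $\sigma_i$'s). One should be careful to note that these two actions agree on $V$ itself, but \emph{not} in general on $\bigwedge^k V$ for $k\geq 2$ — and this is precisely why the Lie-like property is needed to make the deduction work.
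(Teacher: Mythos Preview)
Your proposal is correct and follows exactly the paper's own argument: the paper simply notes that Lie-like elements form a vector space and that the two preceding lemmas together imply the corollary. Your added remark distinguishing the $\mathbb{C}[W]$-action on $\bigwedge^k V$ from the action of the matrix $\rho^{}_V(\mathbf{B}(\bm\omega))$ is a useful clarification that the paper leaves implicit.
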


When all weights are equal to $1$, the previous corollary together with Proposition~\ref{Prop: L_W has all eigenvalues =h} implies that all eigenvalues of the (unweighted) element $\mathbf{B}$ are equal to $kh$, which gives another proof of 
Proposition~\ref{prop:wedgeCoxeterNumbers}.

\subsection{Equivalence of the two main results for a given group $W$}

We are now ready to show that, for a given group $W$, the statements of Theorem~\ref{thm:equivalence} and Theorem~\ref{thm:main} are equivalent. In the rest of this section we fix a complex reflection group $W$.

First, we assume that the statement of Theorem~\ref{thm:equivalence} holds for a given group $W$, and we will deduce Theorem~\ref{thm:main}. We use the notation of Section~\ref{subsec:Laplacians}. 
By Lemma~\ref{Thm: Frobenius a la Guillaume}, we have
$$
\fac_W^T(t,\bm{\omega})=\dfrac{|\mathcal{C}|}{|W|}\cdot \sum_{\chi\in \widehat{W}}\chi(c^{-1})\cdot \chi\big(e^{t\mathbf{A}(\bm\omega)}\big).
$$ 
We note that $\mathbf{A}(\bm\omega)= \sum_{i=1}^n \omega_i J_{T,i}$ clearly belongs to the Jucys-Murphy algebra $\mathbb{C}[\mathbf{J}_T]$. Since we assumed Theorem~\ref{thm:equivalence}, we can use tower equivalence and rewrite this expression as
	\begin{align*}
		\fac_W^T(t,\bm\omega)=\dfrac{|\mathcal{C}|}{|W|}\cdot \sum_{k=0}^n(-1)^k\chi_{\bigwedge^k(V)}\big(e^{t\mathbf{A}(\bm\omega)}\big).
	\end{align*}
	Now, by~\eqref{Eq: B(w)=wt(R)-A(w)}, and by Corollary~\ref{cor:BZ} we know the complete list of eigenvalues of $\mathbf{A}(\bm\omega)$ on $\bigwedge^k V$, which are given by the quantities 
	$
{\bf w}_T^{}(\mathcal{R}) - (\sigma_{i_1}+\dots + \sigma_{i_k}),
	$ 
	for $i_1<\dots<i_k$, where $(\sigma_1, \dots, \sigma_n)$ are the eigenvalues  of the $W$-Laplacian $L_W(\bm\omega) \in  GL(V)$. We directly obtain
	\begin{align*}
\fac_W^T(t,\bm\omega)
	&= \dfrac{|\mathcal{C}|}{|W|}\cdot  
		\sum_{k=0}^n (-1)^k  \sum_{1\leq i_1<\dots<i_k \leq n} e^{t ({\bf w}_T^{}(\mathcal{R}) -\sigma_{i_1}-\dots -\sigma_{i_k})} \\
			&= \frac{1}{h}e^{t{\bf w}_T^{}(\mathcal{R})} \prod_{i=1}^n (1-e^{-t \sigma_{i}}), 
	\end{align*}
	where we have used that $\dfrac{|\mathcal{W}|}{|\mathcal{C}|} =h$ from \eqref{EQ: |C|=|W|/h}. This is precisely the statement of Theorem~\ref{thm:main} for $W$.

Let us now prove the converse statement. Assume that Theorem~\ref{thm:main} holds for a given group $W$. By the same calculation, this is equivalent to saying that for any parabolic tower $T$, we have, in the notation of Section~\ref{subsec:Laplacians},
\begin{align}\label{eq:identity}
\sum_{\chi\in \widehat{W}}\chi(c^{-1})\cdot \chi\big(e^{t\mathbf{A}(\bm\omega)}\big)
=
\sum_{k=0}^n(-1)^k\chi_{\bigwedge^k(V)}\big(e^{t\mathbf{A}(\bm\omega)}\big).
\end{align}
Recall that $\mathbf{A}(\bm\omega)= \sum_{i=1}^n \omega_i J_{T,i}$. 
We now expand the identity~\eqref{eq:identity} in powers of $t$ and the parameters $\omega_1, \dots, \omega_n$. For $i_1,\dots,i_n\geq0$ we extract the coefficient of $t^{i_1+\dots+i_n}\omega_i^{i_1}\dots\omega_n^{i_n}$. We get
\begin{align*}
	\sum_{\chi\in \widehat{W}}\chi(c^{-1})\cdot \chi\big(
	J_{T,1}^{i_1}\dots J_{T,n}^{i_n}
	\big)
=
\sum_{k=0}^n(-1)^k\chi_{\bigwedge^k(V)}\big(J_{T,1}^{i_1}\dots J_{T,n}^{i_n}\big).
\end{align*}
Since the $J_{T,1}^{i_1}\dots J_{T,n}^{i_n}$ are a (vector space) basis of the Jucys-Murphy algebra $\mathbb{C}[\mathbf{J}_T]$, the last statement is equivalent to saying that 
\begin{align*}
	\sum_{\chi\in \widehat{W}}\chi(c^{-1})\cdot \chi\towerequiv
\sum_{\chi\in \widehat{W}}\sum_{k=0}^n(-1)^k\chi_{\bigwedge^k(V)},
\end{align*}
which is precisely the statement of Theorem~\ref{thm:equivalence} for the group $W$.

\section{Proof of Theorem~\ref{thm:equivalence} for the infinite families $G(r,1,n)$ and $G(r,r,n)$}
\label{sec:infiniteFamilies}

In this section, we prove Theorem~\ref{thm:equivalence} for all well generated groups in the infinite family $G(r,p,n)$. These are precisely the groups comprising the subfamilies $G(r,1,n)$ and $G(r,r,n)$; notice that the remaining groups are not well generated and hence do not have Coxeter elements (see Defn.~\ref{Defn: Coxeter element}).

\subsection{Induction and parabolic subgroups}

The proofs for $G(r,1,n)$ and $G(r,r,n)$ have a similar structure, that deeply exploits the recursive nature of towers.
They rely on an induction based on the following lemma. To simplify its statement we first introduce a notion of decomposition of virtual characters.

\begin{definition}\label{Defn. decomp-via-cox-nums}
Let $W$ be an irreducible complex reflection group, with Coxeter number $h$ and rank $n$, and let $\chi=\sum \lambda_i\chi_i$ be a virtual character of $W$ (a sum of irreducible characters $\chi_i\in\widehat{W}$ with $\lambda_i\in\mathbb{C}$). We define the {\color{blue}\emph{decomposition via Coxeter numbers}} of $\chi$ to be the expression 
\[
\chi=\sum_{s=0}^{hn}\chi_s,
\]
where each $\chi_s$ is the virtual character made up of all components of $\chi$ that have Coxeter number $s$. That is,
\[
\chi_s:=\sum_{c_{\chi_i}=s}\lambda_i\chi_i.
\]
Notice that we have used here Prop.~\ref{Prop: coxeter numbers are integers} which states that Coxeter numbers are integers and always belong to the interval $[0,hn]$.
\end{definition}

The following key lemma allows us to prove that two virtual characters $\chi$ and $\psi$ are tower equivalent by checking that all component $\chi_s$ and $\psi_s$ as defined above are themselves tower equivalent in a certain collection of smaller towers.

%The induction relies on the following lemma:
%We will prove the lemma by induction, using the following observation that will also be used in the next section.
\begin{lemma}\label{lemma:subgroup}
	Let $W$ be a complex reflection group and let $\chi, \psi$ be two virtual characters of $W$, viewed as linear functions $W\longrightarrow \mathbb{C}$. Write $\chi=\sum_{s=0}^{hn}\chi_s$ and $\psi=\sum_{s=0}^{hn}\psi_s$ for the decompositions via Coxeter numbers of $\chi$ and $\psi$ as in Defn.~\ref{Defn. decomp-via-cox-nums}. Assume that for each number $s=0,\ldots, hn$ and each proper maximal parabolic subgroup $W_{sub}\subsetneq W$, the restrictions of $\chi_s$ and $\psi_s$ to $W_{sub}$ are tower equivalent in the sense of $W_{sub}$. Then, $\chi$ and $\psi$ are tower equivalent in the sense of $W$. 
\end{lemma}
\begin{proof}
    We will show that $\chi_s$ and $\psi_s$ are tower equivalent in the sense of $W$ (this appears to be stronger than what is required, but it is equivalent to the statement of the lemma after Prop.~\ref{Prop: formal stronger tower equivalence}).
	For any parabolic tower $T=\big(\{\mathbbl{1}\}=W_0\leq W_1\leq \cdots \leq W_n=W \big)$,  let $W_{sub}=W_{n-1}$ and consider the parabolic tower $T'=\big(\{\mathbbl{1}\}=W_0\leq W_1\leq \cdots \leq W_{n-1}=W_{sub} \big)$. Then the Jucys-Murphy algebra $\mathbb{C}[\bm J_T]$ is generated by $\mathbb{C}[\bm J_{T'}]$ and by the sum $\mathbf{R}$ of all reflections of $W$. Because $\mathbb{C}[\bm J_T]$ is commutative, it is enough to confirm that $\chi_s$ and $\psi_s$ agree on all monomials of the form $\mathbf{R}^k\cdot a$ where $a$ is any arbitrary element of $\mathbb{C}[\bm J_{T'}]$.
	
	Now, the \emph{virtual} characters $\chi_s$ and $\psi_s$ are not necessarily irreducible but they are constructed in a way that allows us to factor our the contribution of $\mathbf{R}$. To clarify, we have for any element $a$ in the algebra $\mathbb{C}[\bm J_{T'}]$ that
	\[
	\chi_s(\mathbf{R}^k\cdot a)=\big(|\mathcal{R}|-s\big)^k\cdot \chi_s(a).
	\]
	To see this, write $\chi_s=\sum_{i=1}^r\lambda_i\chi_{s,i}$ for the decomposition of $\chi_s$ as a sum of irreducible characters $\chi_{s,i}$. Indeed now, we will have
	\[
	\chi_s(\mathbf{R}^k\cdot a)=\sum_{i=1}^r\lambda_i\chi_{s,i}(\mathbf{R}^k\cdot a)=\sum_{i=1}^r\lambda_i\big(\widetilde{\chi}_{s,i}(\mathbf{R})\big)^k\cdot \chi_{s,i}(a)=\sum_{i=1}^r\lambda_i\big(|\mathcal{R}|-s\big)^k\cdot\chi_{s,i}(a)=\big(|\mathcal{R}|-s\big)^k\cdot\chi_s(a),
	\]
	where the second equality is because the $\chi_{s,i}$ are \emph{irreducible} characters and $\mathbf{R}$ is a central element, and where the third equality is by the definitions of the components $\chi_s$ (Defn.~\ref{Defn. decomp-via-cox-nums}) and of the generalized Coxeter numbers (Defn.~\ref{Defn: Coxeter numbers}). 
	
	The exact same relation will be true for $\psi_s$ and since by our assumption $\chi_s$ and $\psi_s$ agree on any element $a\in\mathbb{C}[\bm J_{T'}]$, they must now agree on all of $\mathbb{C}[\bm J_T]$ as well. This completes the proof.	
\end{proof}

In order to use the lemma, we need to know the maximal parabolic subgroups of the families we work with. We have (see for instance \cite[Appendix A]{broue-book-braid-groups}):
\begin{lemma}\label{lemma:parabolics}
	1)	Every maximal parabolic subgroup of $G(r,1,n)$ is conjugate to a standard parabolic subgroup, which is either:
	1a: the parabolic subgroup $S_n$, or 1b: the parabolic subgroup $G(r,1,a)\times S_{b}$ for $a=1,\cdots,n-1$ and $b=n-a$. Restriction on this subgroup is given following the chain:$$G(r,1,a)\times S_{b}\hookrightarrow G(r,1,a)\times G(r,1,b)\hookrightarrow G(r,1,n).$$

	2)   Every maximal parabolic subgroup of $G(r,r,n)$ is conjugate in $G(r,1,n)$ to a standard parabolic subgroup of $G(r,r,n)$, which is either:
2a: the parabolic subgroup $S_n$;
	%or 2b:
%the parabolic subgroup $G(r,r,2)\times S_{n-2}\cong I_2(r)\times S_{n-2}$;
	%\theo{This is our boundary case, we have to be a bit careful here because we will have "split branching".}
or 2b: the parabolic subgroup $G(r,r,a)\times S_{b}$ for $a=2,...,n-1$ and $b=n-a$. Restriction on this subgroup is given following the chain: $$G(r,r,a)\times S_{b}\hookrightarrow G(r,r,a)\times G(r,r,b)\hookrightarrow G(r,r,n).$$
\end{lemma}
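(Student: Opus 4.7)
The lemma amounts to classifying the one-dimensional flats of the reflection arrangement of $W$, up to a suitable conjugation, since maximal parabolic subgroups of a rank-$n$ reflection group correspond to codimension-$(n-1)$ flats (i.e. lines through the origin). The plan is to enumerate the possible ``shapes'' of a line in $\mathcal{L}_W$, pick a canonical representative in each conjugacy class by using a permutation followed by a diagonal rescaling, and then read off the pointwise stabilizer directly.

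For part 1, I would describe a line $X = \mathbb{C}\cdot v \in \mathcal{L}_{G(r,1,n)}$ by its support $I := \{i : v_i \ne 0\}$ together with the tuple of ratios of the $v_i$'s, which must be $r$-th roots of unity for $X$ to lie in the arrangement. A permutation in $W$ moves $I$ to $\{1,\dots,k\}$ with $k=|I|$, and a diagonal element of $W$ then normalizes $v$ to $e_1 + \cdots + e_k$. The pointwise stabilizer of this standard vector is the $S_k$ permuting the first $k$ coordinates together with the full monomial action $G(r,1,n-k)$ on the remaining $n-k$ coordinates, giving case 1a when $k = n$ and case 1b with $(a,b)=(n-k,k)$ when $k < n$. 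The ``chain of restriction'' statement is then immediate from the obvious block embedding $S_b \hookrightarrow G(r,1,b)$.

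For part 2, the key ingredient is to observe that $\{x_i = 0\}$ is \emph{not} a reflecting hyperplane of $G(r,r,n)$, so any equation $x_i = 0$ that holds on a flat arises only as a consequence of intersecting two distinct hyperplanes $\{x_i = \zeta x_j\}$ and $\{x_i = \zeta' x_j\}$ with $\zeta \ne \zeta'$, which simultaneously forces $x_j = 0$. Consequently the zero-block of a vector $v$ spanning a line of $\mathcal{L}_{G(r,r,n)}$ has size either $0$ or at least $2$. Setting $k = |I|$, the canonical-form argument proceeds exactly as in part 1; but the diagonal rescaling used to normalize the nonzero entries of $v$ may fail to have determinant $1$, so one must carry it out inside $G(r,1,n) \supset G(r,r,n)$. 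This is exactly the conjugation promised by the lemma, and it is available because $G(r,1,n)$ normalizes $G(r,r,n)$. The stabilizer of the standard vector is then $S_k \times G(r,r,n-k)$, since the determinant-one constraint restricts cleanly to the zero-block, with the $S_k$ factor contributing $1$ to the product of nonzero entries. The case $k = n$ gives case 2a; the cases $k \in \{1,\dots,n-2\}$ give case 2b with $(a,b) = (n-k, k)$; and $k = n-1$ is ruled out by the key observation.

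The main obstacle is the key observation in part 2 excluding a zero-block of size exactly $1$; but once one writes down the list of reflecting hyperplanes of $G(r,r,n)$ this is a short deduction. The rest is bookkeeping with monomial matrices; alternatively, one may simply invoke the description of the lattice of flats $\mathcal{L}_{G(r,p,n)}$ in \cite[\S 6.4]{OT}, already cited in the proof of Lemma~\ref{Lem: standard towers are enough} above.
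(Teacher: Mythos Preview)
Your argument is correct. The paper does not actually prove this lemma: it states it with the parenthetical citation ``see for instance \cite[Appendix~A]{broue-book-braid-groups}'' and moves on. What you have written is a self-contained proof via the explicit description of one-dimensional flats in the monomial arrangements, and the key step for part~2 --- that a zero-block of size exactly~$1$ cannot arise because $\{x_i=0\}$ is not a reflecting hyperplane of $G(r,r,n)$ --- is exactly the right mechanism singling out $a\ge 2$ in case~2b.

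One small sharpening you may wish to note: in case~2b (zero-block of size $\ge 2$) the diagonal rescaling can in fact be adjusted to lie in $G(r,r,n)$, by absorbing the missing determinant into one of the zero coordinates. The place where conjugation in $G(r,1,n)$ is genuinely required is case~2a, where there are no zero coordinates available and the copies of $S_n$ stabilizing $(\zeta_1,\dots,\zeta_n)$ fall into $\gcd(n,r)$ distinct $G(r,r,n)$-conjugacy classes indexed by $\prod_i\zeta_i$ modulo $n$-th powers. This does not affect the validity of your argument, since the lemma only asks for conjugacy in $G(r,1,n)$, but it clarifies where the distinction actually bites.
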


\subsection{Partitions and representations}
 We will use the combinatorial representation theory of the groups $\mathfrak{S}_n, G(r,1,n)$, and $G(r,r,n)$. We refer to~\cite{Sagan} for  symmetric groups, and to~\cite{CS, Stembridge} for what we need of the case  the two other families.

Recall that a \emph{partition} of an integer $n$ is a  nonincreasing sequence of positive integers $\lambda=(\lambda_1 \geq \lambda_2 \geq ... \geq \lambda_\ell>0)$ summing up to $n$. We include in this definition the empty partition $\varnothing$ (of zero). We write $\lambda \vdash n$ if $\lambda$ is a partition of $n$.
Irreducible representations of $\mathfrak{S}_n$ are classically indexed by partitions of $n$, while representations of $G(r,1,n)$ are indexed by $r$-tuples of partitions of total size~$n$. By restriction, any irreducible representation of $G(r,1,n)$ is also a representation of $G(r,r,n)$, but it is not necessarily irreducible : it splits into as many irreducible components as the order of symmetry of the $r$-tuple of partitions indexing it, under cyclic shifts of its entries.
In the following discussion, we abuse terminology and often identify partitions, or $r$-tuples of partitions, with the irreducible representation of character they index.

% Try of nice hook, aborted
%$\begin{array}{l}{\begin{array}{|l|} \hline~\\k\\~\\\hline \end{array}} \\	\fbox{~~ n-k ~~} \\ \end{array}$

The only irreducible representations we will need to consider in detail are the ones whose characters do not vanish on Coxeter elements, which turn out to be relatively few. For $n\geq 1$, define the following partitions of the integer $n$,
	\begin{itemize}
		\item $\hook{n}{k} := (n-k,1^k)$ be the \emph{hook} partition of height $k+1$, for $0\leq k <n$;
		\item $\qhook{n}{k}:=(n-k-1,2,1^{k-1})$ be the \emph{quasi-hook} of height $k+1$, for $1\leq k \leq n-3$.
	\end{itemize}

Throughout Section~\ref{sec:infiniteFamilies} we will use the following convention.
\begin{notation}\label{not:zero}
	A hook or quasihook character indexed by non-combinatorial parameters is defined to be zero, i.e. $\hook{u}{v}:=0$ if $v\geq u$ or $v<0$, and $\qhook{u}{v}:=0$ if $v\geq u-1$ or $v<1$.
\end{notation}
This convention has two advantages: first, it enables us to avoid distinguishing boundary cases when writing sums (see e.g.~\eqref{eq:induction1} or \eqref{eq:inductionGrrn}). Second, it is well adapted to the use of combinatorial rules in representation theory, for example when a generic operation of ''removing a box'' may not be possible, and thus not contribute to any term in the summations, for boundary values of some parameters.

\subsection{The case of $\mathfrak{S}_n$}

As a warmup, we consider the case of $\mathfrak{S}_n$, acting on $V=\mathbb{C}^{n-1}$.
It is well known (see e.g.~\cite{BZ}) that the only irreducible characters that do not vanish on the Coxeter elements are the hooks, and that the hook $\hook{n}{k}$ coincides with the exterior power $\bigwedge^k(V)$. This character takes the value $(-1)^k$ on the Coxeter element and has Coxeter number $hk$, with $h=n$, see e.g. \cite{CS}. Thus~\eqref{eq:filterCoxeterNumbers} holds, which is all we need to prove by  Corollary~\ref{cor:filterCoxeterNumbers}. Note that in the case of $\mathfrak{S}_n$ the equivalence $\towerequiv$ in~\eqref{eq:filterCoxeterNumbers} is, in fact, an equality.

\subsection{The case of $G(r,1,n)$}

\begin{lemma}[{\cite[Sec. 5.3]{CS}}]\label{lemma:listRepsGr1n}
	The only irreducible representations of $G(r,1,n)$ whose character does not vanish on the Coxeter element are the $$
\ghook{n}{k}{q} := (0,...,0,\hook{n}{k},0,...0) \text{ for } 0 \leq q < r \text{ and } 0 \leq k < n,
$$
where the hook $\hook{n}{k}$ appears in $q$-th position.
	Their characters satisfy $\chi_{\ghook{n}{k}{q}}(c^{-1})=(-1)^k \xi^{-q}$ for a primitive $r$-root of unity $\xi$, and their Coxeter numbers are $hk$ if $q=0$ and $h(k+1)$ if $q\neq 0$.
\end{lemma}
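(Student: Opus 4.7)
The plan is to establish the lemma by leveraging the explicit combinatorial character theory of the wreath product $G(r,1,n) = \mathbb{Z}_r \wr \mathfrak{S}_n$. First I would fix a convenient Coxeter element: in the wreath-product realization, one can take $c$ to be an element whose underlying $\mathfrak{S}_n$-permutation is the long $n$-cycle $(1,2,\dots,n)$ and whose ``total color'' equals a fixed primitive $r$-th root of unity $\xi$. Its order is $rn = h$, matching the Coxeter number of $G(r,1,n)$, and any two such elements are conjugate by the regularity arguments recalled in Section~\ref{sec:JM}.

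Next I would invoke the Murnaghan-Nakayama rule for wreath products (see \cite{Stembridge}), which expresses $\chi_{\bm\lambda}$ on an element of prescribed cycle-type-with-colors as a signed sum over colored border-strip tableaux filling the multipartition $\bm\lambda = (\lambda^{(0)},\dots,\lambda^{(r-1)})$, with one border strip per cycle. Since $c$ has a single colored cycle of length $n$ and color $\xi$, non-vanishing contributions arise exactly when a single component $\lambda^{(q)}$ contains all $n$ boxes and admits a border strip of size $n$ covering it entirely, which, exactly as in type $A_{n-1}$, forces $\lambda^{(q)} = \hook{n}{k}$ for some $0 \leq k < n$. The border-strip sign $(-1)^k$ and the wreath-product weight coming from the $q$-th irreducible character of $\mathbb{Z}_r$ applied to the cycle color $\xi$ combine to give $\chi_{\ghook{n}{k}{q}}(c^{-1}) = (-1)^k \xi^{-q}$.

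Finally, for the Coxeter numbers I would evaluate the character on the two classes of reflections of $G(r,1,n)$: the $n(r-1)$ diagonal reflections $t_i^{\alpha}$ (with $\alpha \in \{1,\dots,r-1\}$), on which $\chi_{\ghook{n}{k}{q}}$ takes the value $\dim(\hook{n}{k}) \cdot \xi^{q\alpha}$; and the $r \binom{n}{2}$ transposition-like reflections, on which the two color contributions cancel, leaving the value of the hook character of $\mathfrak{S}_n$ on a transposition. Using $\sum_{\alpha=1}^{r-1} \xi^{q\alpha} = r-1$ for $q = 0$ and $-1$ otherwise, the diagonal reflections contribute $n(r-1)$ to $\widetilde{\chi}(\mathcal{R})$ if $q = 0$ and $-n$ if $q \neq 0$, while the transposition-like reflections contribute $r \cdot \frac{n(n-2k-1)}{2}$, the classical content sum for the hook $\hook{n}{k}$ multiplied by $r$. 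Substituting these into $c_{\chi} = |\mathcal{R}| - \widetilde{\chi}(\mathcal{R})$ with $|\mathcal{R}| = n(r-1) + r\binom{n}{2}$ collapses to $c_{\chi} = hk$ when $q = 0$ and $c_{\chi} = h(k+1)$ when $q \neq 0$, as claimed. The main technical step is the Murnaghan-Nakayama classification, which rules out all non-hook multipartitions in one stroke; the subsequent numerology is then a direct computation.
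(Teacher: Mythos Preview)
Your proposal is correct and follows essentially the same approach as the paper, which simply cites \cite[Sec.~5.3]{CS} for all three assertions. You have supplied the details of that reference: the wreath-product Murnaghan--Nakayama rule for the classification and character value, and the direct evaluation on the two reflection classes for the Coxeter numbers; your arithmetic checks out in both the $q=0$ and $q\neq 0$ cases.
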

\begin{proof}
	The list of representations and their character is given in \cite[Sec 5.3, Step 1-2]{CS}. 
	The Coxeter number can be computed by the explicit evaluation of the normalized character on each conjugacy class of reflections, given in\cite[Sec 5.3, Step 3]{CS}.
\end{proof}

As an additional point of clarity (which we will not use however) we remind the reader that for $G(r,1,n)$, the exterior powers of the reflection representation are indexed by the following $r$-tuples of partitions:
\[
{\bigwedge}^k(V)=\big(n-k,1^k,0,\cdots,0\big).
\] 

Now, the previous  lemma shows that the only characters we need to consider have Coxeter numbers which are multiples of $h$. By Lemma~\ref{lemma:subgroup}, Theorem~\ref{thm:equivalence} for $G(r,1,n)$ is therefore equivalent to the following statement (relying also on Lemma~\ref{lemma:listRepsGr1n} and Prop.~\ref{prop:wedgeCoxeterNumbers} to compare Coxeter numbers). 
For the purpose of using induction, we include the case $n=1$.
\begin{proposition}\label{prop:Gr1n}
	For each $n\geq 1$ and for each $ 0 \leq k < n$ consider the following virtual character of $G(r,1,n)$: 
\begin{align*}
	\mathfrak{P}_{n,k}:=\ghook{n}{k}{0} -  \sum_{0<q<r} \xi^{-q} \cdot \ghook{n}{k-1}{q},
\end{align*}
	where we recall Notation~\ref{not:zero}.
Then we have
\begin{align}\label{eq:inductionGr1n}
	\mathfrak{P}_{n,k} \towerequiv {\bigwedge}^k (V),
\end{align}
where $V=\mathbb{C}^n$ denotes the reflection representation of $G(r,1,n)$.
\end{proposition}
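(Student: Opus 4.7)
The plan is to induct on $n$ and apply Lemma~\ref{lemma:subgroup}: once the statement is known for all smaller ranks (and for symmetric groups, already treated in the warmup), tower equivalence in $G(r,1,n)$ reduces to (i) agreement of $\mathfrak{P}_{n,k}$ and $\chi_{\bigwedge^k V}$ on the sum of all reflections, and (ii) tower equivalence of the two restrictions on each maximal parabolic subgroup. The base $n=1$ is trivial since only $k=0$ arises and both objects coincide with the trivial character. For (i), every irreducible constituent on either side has Coxeter number $hk$ by Lemma~\ref{lemma:listRepsGr1n} and Proposition~\ref{prop:wedgeCoxeterNumbers}, so in each case $\chi(\mathcal{R})=(|\mathcal{R}|-hk)\chi(1)$ and the check reduces to the dimension identity
\[
\dim\mathfrak{P}_{n,k}=\binom{n-1}{k}-\Bigl(\sum_{0<q<r}\xi^{-q}\Bigr)\binom{n-1}{k-1}=\binom{n-1}{k}+\binom{n-1}{k-1}=\binom{n}{k}=\dim\bigwedge^k V,
\]
using $\sum_{0<q<r}\xi^{-q}=-1$.

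By Lemma~\ref{lemma:parabolics}, the maximal parabolics of $G(r,1,n)$ are, up to conjugacy in the normalizer, either $S_n$ or $G(r,1,a)\times S_b$ with $a+b=n$ and $a,b\geq 1$; let $V_a$ (resp.\ $V_b$) denote the reflection representation of $G(r,1,a)$ (resp.\ $G(r,1,b)$). For $S_n$, the restriction of $\ghook{n}{k}{q}$ is independent of $q$ and equals $\hook{n}{k}$, so $\mathfrak{P}_{n,k}\big|_{S_n}=\hook{n}{k}+\hook{n}{k-1}$. On the other hand $V\big|_{S_n}$ is the standard $n$-dimensional permutation representation, which splits as the trivial plus the reflection representation of $S_n$, so $\bigwedge^k V\big|_{S_n}=\hook{n}{k}+\hook{n}{k-1}$ as well, giving equality (stronger than tower equivalence). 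The same computation applied to $S_b$ shows that $\hook{b}{j}+\hook{b}{j-1}$ equals $\bigwedge^j V_b\big|_{S_b}$; this identity will be used below.

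For the harder parabolic $G(r,1,a)\times S_b$, I restrict via the chain $G(r,1,a)\times S_b\subset G(r,1,a)\times G(r,1,b)\subset G(r,1,n)$. The first step uses the Littlewood-Richardson rule for splitting hooks in the $G(r,1,\cdot)$ setting (the subject of the appendix):
\[
\ghook{n}{k}{q}\big|_{G(r,1,a)\times G(r,1,b)}=\sum_{i+j=k}\ghook{a}{i}{q}\otimes\ghook{b}{j}{q}+\sum_{i+j=k-1}\ghook{a}{i}{q}\otimes\ghook{b}{j}{q},
\]
with out-of-range hooks zero per Notation~\ref{not:zero}. Substituting into the definition of $\mathfrak{P}_{n,k}$, applying $\ghook{b}{j}{q}\big|_{S_b}=\hook{b}{j}$, and regrouping terms by the $S_b$-factor, a direct bookkeeping shows that the four summations telescope into
\[
\mathfrak{P}_{n,k}\big|_{G(r,1,a)\times S_b}=\sum_{i+j=k}\mathfrak{P}_{a,i}\otimes\bigl(\hook{b}{j}+\hook{b}{j-1}\bigr).
\]
By the inductive hypothesis applied to $a<n$ we have $\mathfrak{P}_{a,i}\towerequiv\bigwedge^i V_a$ in $G(r,1,a)$; combined with the $S_b$-identity from the previous paragraph, and the observation that a parabolic tower of a product $W_1\times W_2$ is an interleaving of parabolic towers of the two factors (so that $\mathbb{C}[\bm J_T]=\mathbb{C}[\bm J_{T_1}]\otimes\mathbb{C}[\bm J_{T_2}]$, whence tower equivalence passes through tensor products), we conclude
\[
\mathfrak{P}_{n,k}\big|_{G(r,1,a)\times S_b}\towerequiv\sum_{i+j=k}\bigwedge^i V_a\otimes\bigwedge^j V_b\big|_{S_b}=\bigwedge^k V\big|_{G(r,1,a)\times S_b},
\]
closing the induction. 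The principal obstacle is the Littlewood-Richardson computation for hooks and the careful bookkeeping producing the clean telescoping $\sum\mathfrak{P}_{a,i}\otimes(\hook{b}{j}+\hook{b}{j-1})$; the boundary conventions of Notation~\ref{not:zero} are essential to make the identity close on the nose.
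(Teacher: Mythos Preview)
Your proof is correct and follows essentially the same approach as the paper's: induction on $n$ via Lemma~\ref{lemma:subgroup}, the same Littlewood-Richardson computation yielding the telescoped formula $\sum \mathfrak{P}_{a,i}\otimes\hook{b}{j}$ (your $\sum_{i+j=k}\mathfrak{P}_{a,i}\otimes(\hook{b}{j}+\hook{b}{j-1})$ is the same sum reindexed), and the direct check for $W_{sub}=\mathfrak{S}_n$. Two minor remarks: your base case is slightly cleaner than the paper's (which invokes the rank-$1$ Chapuy--Stump formula), and you make explicit the point that $\mathbb{C}[\bm J_T]$ for a product group factors as a tensor product, which the paper uses silently.
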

\begin{proof}
	We will apply induction on $n$.

	For $n=1$, $G(r,1,1)$ is isomorphic to the cyclic group $C_r$. Since it is of rank $1$, the weighting system is trivial, and Theorem~\ref{thm:main} is equivalent to the unweighted Chapuy-Stump formula~\cite{CS}. Because Theorems~\ref{thm:main} and~\ref{thm:equivalence} are equivalent (see Section~\ref{sec:LieLike}), we are done.

	Let $n\geq 2$. We will check the conditions of Lemma~\ref{lemma:subgroup} with $\chi_s$ and $\psi_s$ being the two sides of~\eqref{eq:inductionGr1n} (corresponding to Coxeter numbers $s=kh$). Let $W_{sub}$ be a maximal proper parabolic subgroup of $W=G(r,1,n)$.
	The conditions of Lemma~\ref{lemma:subgroup} being conjugation invariant, we can assume that $W_{sub}$ is given by either case 1a or 1b of Lemma~\ref{lemma:parabolics}.

	We start with case 1b and we let
	$
W_{sub}=G(r,1,a)\times \mathfrak{S}_{b}
$
for some $a\in [1..n-1]$ and $b=n-a$.
	Irreducible representations of $W_{sub}$ are indexed by pairs $U\otimes V$ where $U$ is an $r$-tuple of partitions of total size $a$, and $V$ is a partition of size $b$. We now examine the restriction to $W_{sub}$ of both sides of~\eqref{eq:inductionGr1n}.

We start with the L.H.S. By the ``type B'' Littlewood-Richardson rule (see Appendix~\ref{appsec:LRGr1n}), we have for $\ell\geq 0$
$$
\ghook{n}{\ell}{q}\Big\downarrow_{W_{sub}} = \sum_{\alpha \vdash a \atop \beta \vdash b} c_{\hook{n}{\ell}}^{\alpha, \beta} (0,...,0,\alpha,0,...0) \otimes \beta,
$$
	where $c_{\hook{n}{\ell}}^{\alpha, \beta}$ is the $\mathfrak{S}_n$-Littlewood-Richardson (LR) coefficient, and where the partition $\alpha$ appears in position $q$ inside the $r$-tuple. Using the LR-rule to determine these coefficients (see~\eqref{appeq:LRhook2} in Appendix~\ref{appsec:LRhooks}) we get
	\begin{align}
	\ghook{n}{\ell}{q}\Big\downarrow_{W_{sub}} &=
	\sum_{i,j\geq 0, \epsilon \in \{0,1\} \atop i+j+\epsilon=\ell }
	(0,...,0,\hook{a}{i},0,...0) \otimes  \hook{b}{j} 
	%\nonumber\\ 
	%&
	= \sum_{i,j\geq 0, \epsilon \in \{0,1\} \atop i+j+\epsilon=\ell }  \ghook{a}{i}{q} \otimes  \hook{b}{j} \nonumber. 
\end{align}
	Using this identity for $\ell$ equal to $k$ and $k-1$ (respectively with $q=0$ and $q\neq 0$, and making the change of index $i+1\mapsto i$ in the second case) we obtain by taking a linear combination the ``inductive'' relation
\begin{align}\label{eq:induction1}
	\mathfrak{P}_{n,k}\Big\downarrow_{W_{sub}}  
	&= \sum_{i,j\geq 0, \epsilon \in \{0,1\} \atop i+j+\epsilon=k }  \mathfrak{P}_{a,i} \otimes \hook{b}{j}. 
\end{align}

We now consider the R.H.S. of~\eqref{eq:inductionGr1n}. We could again apply the LR-rule, but this is not necessary. Indeed it is clear that
\begin{align}\label{eq:induction2}
	{\bigwedge}^k ( \mathbb{C}^n )\Big\downarrow_{W_{sub}} 
	&= \sum_{i\geq 0} 
	{\bigwedge}^i ( \mathbb{C}^{a}) \otimes {\bigwedge}^{k-i} (\mathbb{C}^{b}).
\end{align}
	The reflection representation of $\mathfrak{S}_{b}$ is $\mathbb{C}^{b-1}$. We have, as $\mathfrak{S}_{b}$ representations,
	\begin{align}\label{eq:wedges}
		{\bigwedge}^{k-i} (\mathbb{C}^{b}) ={\bigwedge}^{k-i} \big(\mathbb{C}^{b-1} \oplus \mathbb{C}\big) 
		={\bigwedge}^{k-i-1}(\mathbb{C}^{b-1}) + {\bigwedge}^{k-i}(\mathbb{C}^{b-1})= \hook{b}{k-i-1}+\hook{b}{k-i}.
	\end{align}
Thus~\eqref{eq:induction2} rewrites
\begin{align}\label{eq:induction3}
	{\bigwedge}^k ( \mathbb{C}^n )\Big\downarrow_{W_{sub}} 
	&=
	\sum_{i,j\geq 0, \epsilon \in \{0,1\} \atop i+j+\epsilon=k }
	{\bigwedge}^i ( \mathbb{C}^{a}) \otimes\hook{b}{j}.
\end{align}

Assuming the induction hypothesis, and comparing \eqref{eq:induction1} and \eqref{eq:induction3}, we obtain that both sides of~\eqref{eq:inductionGr1n} are tower equivalent in the sense of $W_{sub}$. Moreover, both  have Coxeter number $kh$, hence they take the same value on the sum of all reflections. 

	Therefore in order to apply Lemma~\ref{lemma:subgroup}, it only remains to check the case 1a of Lemma~\ref{lemma:parabolics}, i.e. $W_{sub}=\mathfrak{S}_n$. But we have, as $\mathfrak{S}_n$-representations
$$
	\ghook{n}{\ell}{q}\Big\downarrow_{\mathfrak{S}_n} = \hook{n}{\ell} = {\bigwedge}^\ell (\mathbb{C}^{n-1}).
$$
	Therefore using~\eqref{eq:wedges} again we have 
	$$\mathfrak{P}_{k,n}\Big\downarrow_{\mathfrak{S}_n}={\bigwedge}^k (\mathbb{C}^{n-1})+{\bigwedge}^{k-1} (\mathbb{C}^{n-1}) = {\bigwedge}^k (\mathbb{C}^{n}).$$

In conclusion, the hypotheses of Lemma~\ref{lemma:subgroup} hold for any choice of the subgroup $W_{sub}$, and we conclude that $\mathfrak{P}_{n,k}	\towerequiv \Lambda^k \mathbb{C}^n$  in the sense of the group $G(r,1,n)$, which concludes the induction step. 
\end{proof}

To conclude this section, we note that we could have given a different proof of Theorem~\ref{thm:main} (hence Theorem~\ref{thm:equivalence}) in the case of $G(r,1,n)$. Indeed, in this case there exists a more refined formula (see Section~\ref{Sec: product formulas}) that does not use the induction structure of towers. We chose the proof given here because it is similar in structure to the one we will give in the next section for $G(r,r,n)$.

\subsection{The case of $G(r,r,n)$}
We start by importing the following information from~\cite[Section 5.4]{CS}.
\begin{lemma}[{\cite{CS}}]\label{lemma:listRepsGrrn}
	The only irreducible representations of $G(r,r,n)$ whose character does not vanish on the Coxeter element are the following:
	\begin{itemize}[itemsep=0pt,topsep=0pt]
		\item The representation $\parA$. It has Coxeter number $0$.
		\item The representation $\parB$. It has Coxeter number $r(n-1)n$.
		\item The representation $\parC$ for $1\leq k \leq n-2$. It has Coxeter number $r(n-1)(k+1)$.
		\item The representation $\parD$ for $0\leq k \leq n-2$ and $1\leq q <r$, where the ``$1$'' appears in position $q$. It has Coxeter number $r(n-1)(k+1)$.
	\end{itemize}
\end{lemma}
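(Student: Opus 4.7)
The identification of these four families as the only irreducibles with nonvanishing character on a Coxeter element can be extracted from \cite[Sec.~5.4]{CS}: the strategy there is to realize each irreducible of $G(r,r,n)$ as a component of the restriction from $G(r,1,n)$ (whose irreducibles are parametrized by $r$-tuples of partitions of total size $n$), and then to apply a Murnaghan--Nakayama-type rule for the $G(r,1,n)$-characters on the Coxeter class. Only the four families listed survive this computation. What remains is to establish the stated Coxeter numbers, which I would do directly from Definition~\ref{Defn: Coxeter numbers} via $c_\chi = |\mathcal{R}| - \widetilde{\chi}(\mathcal{R})$.

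The reflections of $G(r,r,n)$ all have order two, so $|\mathcal{R}| = r\binom{n}{2}$ and Proposition~\ref{Prop: L_W has all eigenvalues =h} gives the usual Coxeter number $h = r(n-1)$. Since the class sum $\sum_{\tau\in \mathcal{R}} \tau$ is central in $\mathbb{C}[G(r,r,n)]$, the computation of $c_\chi$ reduces to an evaluation on a representative per $G(r,r,n)$-class of reflections weighted by class sizes. For $\parA$ (the trivial character) this gives $c_\chi = 0$ immediately, and for $\parB$ (the ``sign-like'' character, whose value is $-1$ on every reflection) it gives $c_\chi = 2|\mathcal{R}| = rn(n-1) = r(n-1)n$ as claimed.

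For $\parC$ and $\parD$, the defining $r$-tuples both have trivial stabilizer under cyclic shifts, so each character lifts to (equivalently, arises as the restriction of) a specific irreducible of $G(r,1,n)$, and the MN rule applies. A single $2$-ribbon removal from the non-empty partition $\qhook{n}{k}$ (respectively $\hook{n-1}{k}$), combined with the hook-length formula for $\chi(1)$, gives $\widetilde{\chi}$ on each reflection class; summing over classes and simplifying via standard binomial identities produces $\widetilde{\chi}(\mathcal{R}) = |\mathcal{R}| - r(n-1)(k+1)$ in both cases.

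The most delicate step is the twisted family $\parD$: the reflections of $G(r,r,n)$ split into several $G(r,1,n)$-classes distinguished by which power of a primitive $r$-th root of unity twists their cycle, and the MN rule weights each class by a corresponding phase arising from the ``$1$'' sitting in position $q\neq 0$. The saving point is that summing these phases over the $r$ coloured versions of a given transposition-shape reflection is a geometric series that cancels to leave only an ``untwisted'' contribution, so the final answer coincides numerically with that of the quasi-hook family $\parC$, as required. As a consistency check, one may also use the integrality and bounds of Proposition~\ref{Prop: coxeter numbers are integers} together with the known value $c_{\chi_V}=h$ for the reflection representation to pin the Coxeter numbers down inductively on $k$.
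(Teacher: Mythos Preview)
Your proposal is correct and takes essentially the same approach as the paper: both defer entirely to \cite[Sec.~5.4]{CS} for the list of representations and for the character values on reflections from which the Coxeter numbers are read off. The paper's proof is in fact even terser than yours, consisting of two sentences that point to Steps~1--2 and Step~3 of that section, whereas you helpfully sketch how the extraction actually goes; but there is no difference in method.
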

\begin{proof}
	The list of representations and their character is given in \cite[Sec 5.4, Step 1-2]{CS}. 
	The Coxeter number can be computed by the explicit evaluation of the normalized character on reflections, given in\cite[Sec 5.4, Step 3]{CS}.
\end{proof}

As an additional point of clarity (which we will not use however) we remind the reader that for $G(r,r,n)$, the exterior powers of the reflection representation are indexed by the following $r$-tuples of partitions (the same ones as for $G(r,1,n)$):
\[
{\bigwedge}^k(V)=\big(n-k,1^k,0,\cdots,0\big).
\]

Among the representations appearing in Lemma~\ref{lemma:listRepsGrrn}, $\parA$ is the only one of Coxeter number zero, and it is the trivial representation, also equal to ${\bigwedge}^0 (\mathbb{C}^{n})$. Similarly, $\parB$ is the only one of Coxeter number $r(n-1)n$, and it is the sign (determinant) representation, which is equal to ${\bigwedge}^n (\mathbb{C}^{n})$.
%Therefore~\eqref{eq:equivalence} is already proved for $k=0$ or $k=n$, and 
These two cases already account for the coefficients of $t^0$ and $t^n$ in~\eqref{eq:filterCoxeterNumbers}.
Moreover, since the characters $\parC$ and $\parD$ respectively take the value $(-1)^k$ and $\xi^q(-1)^k$ on the inverse of a Coxeter element (\cite[Lemma 5.7]{CS}), the only thing remaining to prove~\eqref{eq:filterCoxeterNumbers} (hence Theorem~\ref{thm:equivalence}) is the following statement (relying also on Lemma~\ref{lemma:listRepsGrrn} and Prop.~\ref{prop:wedgeCoxeterNumbers} to compare Coxeter numbers).
For the purpose of using induction, we include the case $n=2$.
\begin{proposition}\label{prop:Grrn}
	For each $n,r\geq 2$ and for each $ 0 \leq k < n$, let
	$$\mathfrak{Q}_{n,k}:=\parC+\sum_{0<q<r} \xi^{-q} \cdot \parD.$$
	Then we have, as virtual characters of $G(r,r,n)$: 
\begin{align}\label{eq:inductionGrrn}
	\mathfrak{Q}_{n,k}	\towerequiv - {\bigwedge}^{k+1} (V),
\end{align}
where $V=\mathbb{C}^n$ denotes the reflection representation of $G(r,r,n)$.
\end{proposition}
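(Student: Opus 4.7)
The plan is to follow exactly the same inductive scheme used for $G(r,1,n)$ in Proposition~\ref{prop:Gr1n}, adapted to the type-$D$ setting. I would induct on $n \geq 2$. For the \emph{base case} $n=2$, the group $G(r,r,2)$ is the dihedral group of order $2r$; Notation~\ref{not:zero} forces $\qhook{2}{k}=0$ for $k \in \{0,1\}$, so $\mathfrak{Q}_{2,k}$ reduces to the $\parD$-terms alone. In this rank-$2$ situation one can verify~\eqref{eq:inductionGrrn} either by a direct character computation on the two conjugacy classes of reflections, or equivalently by invoking the unweighted Chapuy--Stump formula~\eqref{EQ: CS-formula} and reading off the tower equivalence via the correspondence of Section~\ref{sec:LieLike}.

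For the \emph{induction step}, fix $n \geq 3$ and apply Lemma~\ref{lemma:subgroup} with $\chi_1 = \mathfrak{Q}_{n,k}$ and $\chi_2 = -\bigwedge^{k+1}(V)$. The first hypothesis (equality on $\sum_{\tau \in \mathcal R}\tau$) is immediate: by Lemma~\ref{lemma:listRepsGrrn} every constituent of $\mathfrak{Q}_{n,k}$ has Coxeter number $h(k+1)$ where $h = r(n-1)$, and by Proposition~\ref{prop:wedgeCoxeterNumbers} the same holds for $\bigwedge^{k+1}(V)$. For the second hypothesis, Proposition~\ref{Prop: Thm invariant under N(W)-orbits} together with $G(r,1,n) \leq N_{\mathrm{GL}(V)}(G(r,r,n))$ lets us restrict attention to the two standard parabolics listed in Lemma~\ref{lemma:parabolics}(2): namely $W_{\mathrm{sub}} = \mathfrak{S}_n$ (case 2a) and $W_{\mathrm{sub}} = G(r,r,a) \times \mathfrak{S}_b$ for $2 \leq a \leq n-1$, $b=n-a$ (case 2b).

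Case 2a is a short calculation. Restricted to $\mathfrak{S}_n$, $\parC$ becomes $\qhook{n}{k}$ and $\parD$ becomes the induction of $\hook{n-1}{k}\boxtimes (1)$ from $\mathfrak{S}_{n-1}\times\mathfrak{S}_1$, which by the Pieri rule equals $\hook{n}{k}+\hook{n}{k+1}+\qhook{n}{k}$ (with the zero convention absorbing degenerate cases). Since $\sum_{0<q<r}\xi^{-q}=-1$, the quasi-hook term cancels and one gets
$$
\mathfrak{Q}_{n,k}\big\downarrow_{\mathfrak S_n} = -\hook{n}{k}-\hook{n}{k+1},
$$
which matches $-\bigwedge^{k+1}(\mathbb{C}^n)\downarrow_{\mathfrak{S}_n}$ by~\eqref{eq:wedges}; this is in fact an equality, not merely a tower equivalence.

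Case 2b is the real work and where I expect the main obstacle. Because the $r$-tuples indexing $\parC$ and $\parD$ have trivial cyclic stabilizer, they lift unambiguously to $G(r,1,n)$-characters, so one can first apply the type-$B$ Littlewood--Richardson rule of Appendix~\ref{appsec:LRGr1n} to restrict along $G(r,1,n)\to G(r,1,a)\times G(r,1,b)$, then restrict further to $G(r,r,a)\times G(r,r,b)$ and finally to $G(r,r,a)\times \mathfrak{S}_b$ using~\eqref{eq:wedges} on the second factor. Plugging in the Littlewood--Richardson formulas for hooks and quasi-hooks (analogous to~\eqref{appeq:LRhook2}, to be recorded in the appendix), the restriction should rearrange into the form
$$
\mathfrak{Q}_{n,k}\big\downarrow_{W_{\mathrm{sub}}} \ \ \towerequiv \ \ \sum_{i+j+\epsilon = k+1,\ \epsilon \in \{0,1\}}\!\!\!\mathfrak{Q}_{a,i-1}\otimes \hook{b}{j} \ +\ (\text{boundary terms}),
$$
where the boundary contributions come from the exceptional Coxeter-number-$0$ and Coxeter-number-$h(a-1)a$ constituents $\parA$ and $\parB$ of $G(r,r,a)$ (which replace the ``$i=0$'' and ``$i=a$'' ends of the exterior-power sum). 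On the other side, expanding $-\bigwedge^{k+1}(V)\downarrow_{W_{\mathrm{sub}}} = -\sum_{\ell+m=k+1}\bigwedge^\ell(\mathbb{C}^a)\otimes\bigwedge^m(\mathbb{C}^b)$, applying~\eqref{eq:wedges} to the $\mathfrak{S}_b$-factor, and invoking the induction hypothesis $-\bigwedge^\ell(\mathbb{C}^a)\towerequiv \mathfrak{Q}_{a,\ell-1}$ for $1\leq \ell \leq a-1$ (together with the identifications $\bigwedge^0 = \parA_a$ and $\bigwedge^a = \parB_a$ at the boundary) produces exactly the same expression. This completes the induction. The delicate combinatorial bookkeeping --- in particular verifying that the quasi-hook/hook LR coefficients conspire to cancel all characters outside the list of Lemma~\ref{lemma:listRepsGrrn}, and that the boundary terms $\parA,\parB$ match the $\ell\in\{0,a\}$ boundary of the exterior-power expansion --- is where all the technical weight of the argument sits, and is what the companion LR appendix will be designed to carry out.
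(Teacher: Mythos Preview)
Your proposal is correct and follows essentially the same inductive scheme as the paper: induction on $n$ with the dihedral base case, Lemma~\ref{lemma:subgroup} for the step, and the same two parabolic restrictions. Your case 2a computation is in fact cleaner than the paper's (which carries a sign typo in its final line).

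Two small corrections on case 2b, where you leave the details open. First, only \emph{one} boundary term arises, at the $\ell=0$ end: the paper obtains
\[
\mathfrak{Q}_{n,k}\big\downarrow_{G(r,r,a)\times\mathfrak{S}_b}
= -(\hook{a}{0},\dots)\otimes(\hook{b}{k}+\hook{b}{k+1})
+\sum_{i,j\geq 0,\ \epsilon\in\{0,1\}\atop i+j+\epsilon=k}\mathfrak{Q}_{a,i}\otimes\hook{b}{j},
\]
with no separate $\parB$ contribution; the top end is absorbed by the zero convention. Second, the mechanism you describe as ``LR coefficients conspire to cancel characters outside the list of Lemma~\ref{lemma:listRepsGrrn}'' is more specific than that: the restrictions of $\parC$ and of $\parD$ each produce, in addition to the desired pieces, the \emph{same} block of ``junk'' terms of the form $(\hook{a}{i},\dots)\otimes\qhook{b}{j}$ and $(1+\delta_\epsilon)(\hook{a}{i},\dots)\otimes\hook{b}{j}$, independent of $q$. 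These cancel in the linear combination precisely because $1+\sum_{0<q<r}\xi^{-q}=0$. That identity, not a case-by-case LR miracle, is what makes the induction close.
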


\begin{proof}	We use the same structure as in the proof of Proposition~\ref{prop:Gr1n}, and apply induction on $n$.

	We start with $n=2$. Theorem~\ref{thm:main} for the dihedral group $G(r,r,2)$ will be proved in Section~\ref{sec:dihedral}. Admitting this for now, using that Theorems~\ref{thm:main} and~\ref{thm:equivalence} are equivalent (see Section~\ref{sec:LieLike}), and Corollary~\ref{cor:filterCoxeterNumbers}, we are done for this case.
	%Still, we note that in that case, the character $\parD$ may not be irreducible (the $r$-tuple has a symmetry of order $2$ if $q=r/2$) and because we will use that case in the induction, we insist on the fact that this fact will not cause any problem in the proof.

	Let $n\geq 3$. We will check the conditions of Lemma~\ref{lemma:subgroup} with $\chi_s$ and $\psi_s$ being the two sides of~\eqref{eq:inductionGrrn} (corresponding to Coxeter numbers $s=(k+1)h$). Let $W_{sub}$ be a maximal proper parabolic subgroup of $W=G(r,r,n)$. By Lemma~\ref{lemma:parabolics} we can assume that $W_{sub}$ is given by case 2a or 2b of that lemma.

	We start with case 2b, and assume that
$
W_{sub}=G(r,r,a)\times \mathfrak{S}_{b}
$
for some $a\in [2..n]$ and $b=n-a$. We now examine the restrictions to this subgroup.
%	Irreducible representations of $W_{sub}$ are indexed by pairs $U\otimes V$ where $U$ is an $r$-tuple of partitions of total size $a$, and $V$ is a partition of size $b$. We now examine the restriction to $W_{sub}$ of both sides of~\eqref{eq:inductionGr1n}.
 
We start with $\parC$. By the ``type D'' Littlewood-Richardson rule (see Appendix~\ref{appsec:LRGrrn}), 
and by the computations in Appendix~\ref{appsec:LRquasihooks}, 
	we have, with $\delta_\epsilon$ the indicator function of $\{\epsilon=0\}$,
\begin{align}
	(\qhook{n}{k},...) \Big\downarrow^{G(r,r,n)}_{(G(r,r,a) \times \mathfrak{S}_{b}} &=
	-(\hook{a}{0},...)\otimes(\hook{b}{k}+\hook{b}{k+1}) +
	\sum_{i,j\geq 0, \epsilon \in \{0,1\} \atop i+j+\epsilon=k } (\qhook{a}{i}, ...) \otimes \hook{b}{j} 
%	+\sum_{i,j\geq 0, \epsilon \in \{0,1\} \atop i+j+\epsilon=k }
	+(\hook{a}{i}, ...) \otimes \qhook{b}{j} \nonumber \\
	& \quad \quad \quad \quad \quad \quad+\sum_{i,j\geq 0, \epsilon \in \{-1,0,1\} \atop i+j+\epsilon=k } (1+\delta_\epsilon) \cdot (\hook{a}{i}, ...) \otimes \hook{b}{j}.
	%\nonumber \\
	%&\quad \quad .
	\label{eq:h1}
\end{align}
%%%TEST FOR NEW NOTATION
%\begin{align}
%	[\qhook{n}{k}]_0 \Big\downarrow^{G(r,r,n)}_{(G(r,r,a) \times \mathfrak{S}_{b}} &=
%	\sum_{i,j\geq 0, \epsilon \in \{0,1\} \atop i+j+\epsilon=k } [\qhook{a}{i}]_0 \otimes \hook{b}{j} 
%	+\sum_{i,j\geq 0, \epsilon \in \{0,1\} \atop i+j+\epsilon=k } [\hook{a}{i}]_0 \otimes \qhook{b}{j}
%+\sum_{i,j\geq 0, \epsilon \in \{-1,0,1\} \atop i+j+\epsilon=k } (1+\delta_\epsilon) \cdot [\hook{a}{i}]_0 \otimes \hook{b}{j} \nonumber \\
%	&\quad \quad -[\hook{a}{0}]_0\otimes(\hook{b}{k}+\hook{b}{k+1}).
%	\label{eq:h1}
%\end{align}

We now consider $\parD$, for which both steps of the restriction
$$G(r,r,a)\times S_{b}\hookrightarrow G(r,r,a)\times G(r,r,b)\hookrightarrow G(r,r,n)$$
	come into play. First, by the splitting rule for the restriction $ G(r,r,k)\times G(r,r,n-k)\hookrightarrow G(r,r,n)$ (see \eqref{appeq:parabolicLR2}  and Appendix~\ref{appsec:LRGrrn}) we have
	%At the first restriction $ G(r,r,k)\times G(r,r,n-k)\hookrightarrow G(r,r,n)$, each partition in each position of the $r$-tuple splits into two components according to the classical LR rule (see \eqref{appeq:parabolicLR2} in Appendix~\ref{appsec:LRGrrn}). We get
	\begin{align}
		\parD\Big\downarrow^{G(r,r,n)}_{G(r,r,a)\times G(r,r,b)}
		=& \sum_{i,j\geq 0, \epsilon \in \{0,1\} \atop i+j+\epsilon=k }(\hook{a-1}{i}, ..., 1, ...) \otimes (\hook{b}{j} , ...) %\nonumber \\
		%&
		+
		\sum_{i,j\geq 0, \epsilon \in \{0,1\} \atop i+j+\epsilon=k }
		(\hook{a}{i}, ...) \otimes (\hook{b-1}{j} , ..., 1, ...) \label{eq:restrict1}
	\end{align}
%%%% TEST FOR NEW NOTATION
%		\begin{align}
%	[\hook{n-1}{k}, 1]_{0,q} 	\Big\downarrow^{G(r,r,n)}_{G(r,r,a)\times G(r,r,b)}
%			=& \sum_{i,j\geq 0, \epsilon \in \{0,1\} \atop i+j+\epsilon=k }[\hook{a-1}{i}, 1]_{0,q} \otimes [\hook{b}{j}]_0 
%			&+ \sum_{i,j\geq 0, \epsilon \in \{0,1\} \atop i+j+\epsilon=k }[\hook{a}{i}]_0 \otimes [\hook{b-1}{j} , 1]_{0,q}  \label{eq:restrict1}
%	\end{align}
	where the two sums correspond to the two possible splittings of the paritition `1` into itself and the empty partition, and where each sum describes all the splittings of the hook $\hook{n-1}{k}$ (see Appendix~\ref{appsec:LRhooks}).
%	where isolated 1's appear at position $q$.
	Then, we apply the splitting rule for the restriction $G(r,r,a)\times S_{b}\hookrightarrow G(r,r,a)\times G(r,r,b)$
	(see \eqref{appeq:parabolicLR3} and Appendix~\ref{appsec:LRGrrn}) that consists in merging
	all partitions in each $r$-tuple appearing to the right of a tensor product into a single partition of $b$.
Only the second sum in~\eqref{eq:restrict1} is affected nontrivially by this phenomenon.
Namely, there are at most three ways to add a box to the hook $\hook{n-1}{k}$, resulting in the  three possible partitions
$\hook{n}{k+1}, \hook{n}{k}$ and $\qhook{n}{k}$. We obtain
	\begin{align}
		\parD\Big\downarrow^{G(r,r,n)}_{G(r,r,a)\times S_{b}}
		& = \sum_{i,j\geq 0, \epsilon \in \{0,1\} \atop i+j+\epsilon=k }(\hook{a-1}{i}, ..., 1, ...) \otimes \hook{b}{j}
		%\nonumber \\
	 + \sum_{i,j\geq 0, \epsilon \in \{0,1\} \atop i+j+\epsilon=k }(\hook{a}{i}, ...) \otimes 
		( \hook{b}{j+1} +\hook{b}{j}+\qhook{b}{j})
	 %+ \sum_{i,j\geq 0, \epsilon \in \{0,1\} \atop i+j+\epsilon=k }(\hook{a}{i}, ...) \otimes \hook{b}{j}
%		\nonumber\\  
%		&  + \sum_{i,j\geq 0, \epsilon \in \{0,1\} \atop i+j+\epsilon=k }(\hook{a}{i}, ...) \otimes \qhook{b}{j}
\nonumber \\
=& \sum_{i,j\geq 0, \epsilon \in \{0,1\} \atop i+j+\epsilon=k }(\hook{a-1}{i}, ..., 1, ...) \otimes \hook{b}{j} \nonumber \\
		&+ \sum_{i,j\geq 0, \epsilon \in \{-1,0,1\} \atop i+j+\epsilon=k } (1+\delta_\epsilon) \cdot (\hook{a}{i}, ...) \otimes \hook{b}{j} 
		%	  \nonumber\\  
		%&
		+ \sum_{i,j\geq 0, \epsilon \in \{0,1\} \atop i+j+\epsilon=k }  (\hook{a}{i}, ...) \otimes \qhook{b}{j}
		\label{eq:h2}.
	\end{align}
where the last equality is obtained by sum and indices manipulations.
%(namely, distribute the three terms $\hook{b}{j+1} +\hook{b}{j}+\qhook{b}{j}$ into three different sums over $i+j+\epsilon=k$, make the change of index  $j+1\mapsto j, \epsilon-1\mapsto \epsilon$ in the second one, and collect the first and second sums together).

	We observe that there are common terms, independent of $q$, in \eqref{eq:h1} and \eqref{eq:h2}. Moreover, because $\sum_{0<q<r} \xi^q =-1$, we get by linear combination that
	\begin{align}
		\mathfrak{Q}_{n,k} \Big\downarrow^{G(r,r,n)}_{G(r,r,a) \times \mathfrak{S}_{b}} 
		&=-\hook{a}{0}\otimes(\hook{b}{k}+\hook{b}{k+1}) 
	 +\sum_{i,j\geq 0, \epsilon \in \{0,1\} \atop i+j+\epsilon=k } 
		\Big((\qhook{a}{i}, ...) +\sum_{0<q<r}\xi^{-q}(\hook{a-1}{i},...,1,...) 
		\Big)\otimes \hook{b}{j} \nonumber\\
%		& \quad \quad \\
		&=
		 -\hook{a}{0}\otimes(\hook{b}{k}+\hook{b}{k+1}) + \sum_{i,j\geq 0, \epsilon \in \{0,1\} \atop i+j+\epsilon=k } 
		\mathfrak{Q}_{a,i}\otimes \hook{b}{j}.
		\end{align}
	Using the induction hypothesis $\mathfrak{Q}_{a,i}\equiv -{\bigwedge}^{i+1} (\mathbb{C}^n)$, the fact that $\hook{b}{j}+\hook{b}{j-1}={\bigwedge}^{j}(\mathbb{C}^{b})$ as $\mathfrak{S}_{b}$-representations for any $j\geq 0$, and the fact that $\hook{a}{0}$ is the trivial representation also equal to ${\bigwedge}^0 (\mathbb{C}^a)$, the last equation can be rewritten
	\begin{align}
		\mathfrak{Q}_{n,k} \Big\downarrow^{G(r,r,n)}_{G(r,r,a) \times \mathfrak{S}_{b}} 
		&\equiv
		 -\sum_{i,j\geq 0,  \atop i+j=k+1 } {\bigwedge}^i(\mathbb{C}^a) \otimes {\bigwedge}^j(\mathbb{C}^{b})\\
		 &=-{\bigwedge}^{k+1}(\mathbb{C}^n).
\end{align}
%Similarly as in the proof of Proposition~\ref{prop:Gr1n}, 
We conclude that $\mathfrak{Q}_{n,k} \equiv -{\bigwedge}^{k+1} (\mathbb{C}^n)$ as virtual characters of the subgroup $W_{sub}=G(r,r,a) \times \mathfrak{S}_{b}$.

It remains to examine case 2a, that is $W_{sub}=\mathfrak{S}_n$. The restriction of $\parC$ is $\qhook{n}{k}$, and the restriction of $(\hook{n-1}{k}, ... 1, ...)$ is the sum of the three terms $\hook{n}{k+1}, \hook{n}{k}$ and $\qhook{n}{k}$, corresponding to the three ways to add a box to the hook $\hook{n-1}{k}$. Using  $\sum_{0<q<r}\xi^q =-1$ we get as $\mathfrak{S}_n$-representations
$$
\mathfrak{Q}_{n,k}\Big\downarrow^{G(r,r,n)}_{\mathfrak{S}_n} = \hook{n}{k+1} + \hook{n}{k} = {\bigwedge}^{k+1}(\mathbb{C}^n).
$$

Thus the hypotheses of Lemma~\ref{lemma:subgroup} hold for any choice of the subgroup $W_{sub}$, and we conclude that $\mathfrak{Q}_{n,k}	\equiv {\bigwedge}^{k+1} (\mathbb{C}^n)$ for the group  $G(r,r,n)$, which concludes the induction step. 

\end{proof}

\subsection{The dihedral group $G(r,r,2)$.}
\label{sec:dihedral}

In this section we consider the group $G(r,r,2)$. We prove an explicit formula for the generating function of factorizations with an arbitrary weight system (see~\eqref{eq:explicitDihedral} below), and then specialize it to prove Theorem~\ref{thm:main}.

The  group $W=G(r,r,2)\subset GL_2(\mathbb{C})$, which is isomorphic to the dihedral group $I_2(r)$,  consists of the $2r$ elements 
$$
t_i=\left(\begin{array}{cc} 0 & \zeta^i \\ \zeta^{-i} & 0 \end{array}\right) \ , \  
\rho_i=\left(\begin{array}{cc} \zeta^i &0  \\0& \zeta^{-i} \end{array}\right), \ 0\leq i \leq r-1,
	$$
where $\zeta$ is a primitive $r$-th root of unity. It contains $r$ reflections $t_0, \dots, t_{r-1}$, its Coxeter number is $h=r$,  and the two Coxeter elements are $\rho_1$ and $\rho_{-1}$.
Elements of $W$ obey the relations $t_i t_j = \rho_{i-j}$ and $\rho_i t_j = t_{i+j}$ (here and below indices are considered modulo $r$), consequently
\begin{align}
	\label{eq:dihedralRelation}
	t_{i_1} t_{i_2}\dots t_{i_{2\ell}} = \rho_{i_1-i_2+i_3\dots -  i_{2\ell}}.
\end{align}
Now, consider the most general weight system assigning the weight $\omega_i$ to the reflection $t_i$, for each $0 \leq i< r$, and define the following formal power series in $\mathbb{Q}[u,u^{-1},\omega_0,\dots,\omega_{r-1}][[t]]$:  
\begin{align*}
	H(t,u) &:= \sum_{\ell \geq 0}
	\frac{t^{2\ell}}{(2\ell)!}
	 \big((u^0 \omega_0 + u^1 \omega_1 + \dots+ u^{r-1} \omega_{r-1}) (u^{-0} \omega_0 + u^{-1} \omega_1 + \dots+ u^{-(r-1)} \omega_{r-1})\big)^\ell\\
	 &= \mathrm{cosh} \left(t\sqrt{(u^0 \omega_0 + u^1 \omega_1 + \dots+ u^{r-1} \omega_{r-1}) (u^{-0} \omega_0 + u^{-1} \omega_1 + \dots+ u^{-(r-1)} \omega_{r-1})
		 }\right).
\end{align*}
	Then by~\eqref{eq:dihedralRelation} the weighted exponential generating function of factorizations of Coxeter elements in $G(r,r,2)$ is equal to 
	$$
	\sum_{k \in \mathbb{Z} \atop k \equiv \pm 1 \mod r} [u^k] H(t,u),
	$$
where $[u^k]$ denotes coefficient extraction, and where we observe that the sum over $k$ is finite for the coefficient of each fixed power of $t$. This quantity can be expressed  as a sum over $r$-th roots of unity, and we obtain the explicit expression:
\begin{align}\label{eq:explicitDihedral}
	\fac_{G(r,r,2),\mathcal{C}}^{\mathcal{R},P}(t,\bm\omega) &= \frac{1}{r}
	\sum_{i=0}^{r-1} \left( \zeta^i +\zeta^{-i} \right) H(t,\zeta^i), 
\end{align}
where we use the notation of Section~\ref{Subsection: A frobenius Lemma for weighted enumeration}, and $P$ denotes the  partition of the set $\mathcal{R}$ of reflections into singletons, with associated weight function $\mathbf{w}_P(t_i)=\omega_i$.

Let us now prove Theorem~\ref{thm:main} for this group. Every parabolic tower is conjugated to the parabolic tower $\{1\} \subset W_1\subset  W $ with $W_1=\{1,t_0\}$. This corresponds to the weight system giving weight $\omega_0$ to $t_0$ and $\omega_1$ to all other reflections. The $W$-Laplacian in this case is thus
$$
L= \omega_0 (1-t_0) + \omega_1 \sum_{i=1}^{r-1} (1-t_i)= \left(\begin{array}{cc}  \omega_0+(r-1)\omega_1&\omega_1-\omega_0 \\ \omega_1-\omega_0 & \omega_0+(r-1)\omega_1\end{array}\right),
$$
with eigenvalues $r\omega_1$ and $2\omega_0+(r-2)\omega_1$. Now applying~\eqref{eq:explicitDihedral} with  $\omega_i=\omega_1$, for $i\neq 0$, directly gives
\begin{align*}
\fac_W^T(t,\bm\omega)& =
	\frac{1}{r}
	\left(2\mathrm{cosh}(t(\omega_0+(r-1)\omega_1)) 
	+\sum_{i=1}^{r-1} \left( \zeta^i +\zeta^{-i} \right) \mathrm{cosh}(t( \omega_0 -  \omega_1))\right)\\
	&=\frac{1}{r}\left(e^{t(\omega_0+(r-1)\omega_1)} +e^{-t(\omega_0+(r-1)\omega_1)} - e^{t(\omega_0-\omega_1)} - e^{-t(\omega_0-\omega_1)} \right),\\
		&= \frac{1}{r} e^{t(\omega_0+(r-1)\omega_1)}\left(1 - e^{-tr\omega_1}\right)\left(1 - e^{-t(2\omega_0+(r-2)\omega_1)}\right),
\end{align*}
which is precisely Theorem~\ref{thm:main} for this group. There are in fact finer weight systems for which the generating function~\eqref{eq:explicitDihedral} factors, see~\S~\ref{Sec: end: dihedrals}.

\section{The $\mathcal{A}$-Laplacian and applications in Coxeter combinatorics}
\label{Section: Matrix-Forest}

As we mention in the Introduction (Corol.~\ref{cor:reduced}) a consequence of our Thm.~\ref{thm:main} is a version of the Matrix tree theorem for reflection groups. It is well known that the whole characteristic polynomial $\chi\big(L_G(\bm\omega)\big)$ of the (weighted) Laplacian of a graph $G$ has a combinatorial interpretation  (not just its determinant). This is usually referred to as the (weighted) Matrix Forest theorem; we give it below for the complete graph $K_n$ and calculating $\chi\big(-L_{K_n}(\bm\omega)\big)$ for simplicity \cite[Lemma~4]{arxiv_MT}:

\begin{equation}\op{det}\big(x+L^{}_{K_n}(\bm\omega)\big)=\sum_{\mathcal{F}}\mathbf{w}(\mathcal{F})\cdot x^{c(\mathcal{F})},\label{Eq: Usual M-F theorem}\end{equation} where the sum is over all forests $\mathcal{F}$ of rooted trees, with vertices in $[n]$ and $c(\mathcal{F})$-many connected components (and therefore roots), and where $\bf{w}$ is the usual weight function assigning the weight $\omega_{ij}$ to the edge $(ij)$.

In this section we give an analog of the Matrix Forest theorem for reflection groups (Thm.~\ref{Thm: WMFT}), where we interpret the characteristic polynomial of the $W$-Laplacian $L^T_W(\bm\omega)$ (with Jucys-Murphy weights) via factorizations of parabolic Coxeter elements. We prove it by combining Corol.~\ref{cor:reduced} with a recursive formula for $\chi\big(L_W(\bm\omega)\big)$ which however holds for a much more general class of Laplacian matrices. As we are convinced of the many possible applications of these latter objects (see for instance Remark~\ref{Rem: Brieskorn's lemma} and Sections~\ref{Sec: identity multiset Cox nums} and~\ref{Sec: root polytopes}) we build the general framework.

\begin{definition}\label{Defn: A-Laplacian}
Let $\mathcal{A}$ be a (possibly complex) hyperplane arrangement in a space $V\cong\CC^n$ and choose for each of its $N$-many hyperplanes $H_i$ a (Hermitian-)orthogonal vector $r_i$ (of arbitrary norm). Now, let $\bm\omega=(\omega_i)_{i=1}^N$ be a weight system for $\mathcal{A}$ and define the $\mathcal{A}$-Laplacian matrix as $$ \op{GL}(V)\ni L_{\mathcal{A}}(\bm\omega):=\sum_{i=1}^N\omega_i\cdot\big( \mathbf{I}_n - S_{r_i}\big), $$ where $\mathbf{I}_n$ is the $(n\times n)$ identity matrix and $S_{r_i}(v)=v-\langle v,r_i\rangle\cdot r_i$.

\noindent Notice that $\mathcal{L}_{\mathcal{A}}(\bm\omega)$ is a sum of rank-1 operators and that it depends on the norms $\langle r_i, r_i\rangle$ of the vectors we chose. In particular, for a real arrangement $\mathcal{A}$, if $\langle r_i,r_i\rangle=2$ then $S_{r_i}$ is the reflection across the hyperplane $H_i$. As with the usual Laplacian, we can rewrite $L_{\mathcal{A}}(\bm\omega)$ as $R\Omega R^T$, where $R$ is the $n\times N$ matrix that encodes the $r_i$'s (see proof of Lemma~\ref{Lem:Burman M-T} below).  
\end{definition}

Burman et al. developed \cite{BPT} an extensive theory for such and more general matrices. In particular, they gave an abstract Matrix-forest theorem that computes the characteristic polynomial $\chi\big(L_{\mathcal{A}}(\bm\omega)\big)$ via certain Grammian determinants. Its proof is quite short and we reproduce it below for completeness.

\begin{lemma}\cite[Corol.~2.4]{BPT}\label{Lem:Burman M-T}
For a hyperplane arrangement $\mathcal{A}$ of dimension $n$ and its Laplacian matrix $L_{\mathcal{A}}(\bm\omega)$ defined above, the characteristic polynomial $\chi\big(-L_{\mathcal{A}}(\bm\omega)\big)$ is given by
$$\op{det}\big(x+L_{\mathcal{A}}(\bm\omega)\big)=\sum_{\{r_{i_1},\cdots,r_{i_k}\}}\omega_{i_1}\cdots\omega_{i_k}\cdot\op{det}\big(\langle r_{i_s},r_{i_t}\rangle \big)_{s,t=1}^k\cdot x^{n-k},$$ where the sum is over all linearly independent \emph{sets} of normal vectors $r_i$ (and $k=0\dots n$).

\end{lemma}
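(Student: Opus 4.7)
The plan rests on two successive applications of the Cauchy--Binet formula, together with a clean rank-one factorization of the $\mathcal{A}$-Laplacian. Since the operator $\mathbf{I}_n - S_{r_i}$ sends $v$ to $\langle v, r_i\rangle r_i$, it has matrix $r_i r_i^*$ in any orthonormal basis, and summing these contributions gives the factorization $L_{\mathcal{A}}(\bm\omega) = R\,\Omega\,R^*$, where $R$ is the $n\times N$ matrix whose columns are the $r_i$'s, $\Omega = \mathrm{diag}(\omega_1,\dots,\omega_N)$, and $R^* = \overline{R}^{\,T}$. This is the arrangement-level analogue of Proposition~\ref{prop:L=RR^T}, established by the same direct computation.

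From this factorization I proceed in three standard steps. First, I invoke the classical identity expressing $\det(x\mathbf{I}_n + L_{\mathcal{A}}(\bm\omega))$ as $\sum_{k=0}^{n} e_k\, x^{n-k}$, where $e_k = \sum_{|J|=k}\det\bigl((L_{\mathcal{A}})_{J,J}\bigr)$ is the sum of all $k\times k$ principal minors. Second, for each $J\subset [n]$ with $|J|=k$, I expand the principal minor of $R\Omega R^*$ by Cauchy--Binet as
$$\det\bigl((R\Omega R^*)_{J,J}\bigr) = \sum_{I\subset[N],\,|I|=k}\omega_{i_1}\cdots\omega_{i_k}\,\bigl|\det R_{J,I}\bigr|^2.$$
Third, exchanging the order of summation over $I$ and $J$, the inner sum becomes $\sum_{|J|=k}|\det R_{J,I}|^2 = \det(R_I^* R_I)$ by a second application of Cauchy--Binet, and this equals exactly the Gram determinant $\det(\langle r_{i_s}, r_{i_t}\rangle)_{s,t=1}^k$. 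Assembling these three identities yields the claimed formula, and the restriction of the final sum to \emph{linearly independent} subfamilies follows from the classical fact that a Gram determinant vanishes precisely on linearly dependent tuples.

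I do not foresee any real obstacle: the argument is an algebraic identity at the matrix level and does not appeal to the reflection-group structure of $\mathcal{A}$ at all, which is consistent with the fact that the statement and proof belong to the general framework of \cite{BPT}. The only point requiring mild care is the Hermitian setting, since $\mathcal{A}$ is allowed to be complex and the $r_i$'s are defined up to a scalar of modulus one; but conjugation is absorbed by writing $\det R_{J,I}\cdot\overline{\det R_{J,I}} = |\det R_{J,I}|^2$, and the two Cauchy--Binet steps then go through verbatim.
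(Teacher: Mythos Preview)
Your proof is correct and follows essentially the same route as the paper's own argument: both factorize $L_{\mathcal{A}}(\bm\omega)=R\Omega\overline{R}^{T}$, expand $\det(x+R\Omega\overline{R}^{T})$ as a sum over principal minors, and then apply Cauchy--Binet twice (once to each principal minor, once after swapping the order of summation) to arrive at the Gram determinants. The only cosmetic difference is that you state explicitly why the sum may be restricted to linearly independent subfamilies, which the paper leaves implicit.
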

\begin{proof}
	Pick an orthonormal basis $(e_1,\dots,e_n)$ of $V$, and let $R$ be the $n\times N$ matrix such that $r_j=\sum_{i=1}^n R_{ij} e_i$, for $1\leq j \leq N$. We have $\langle e_i,r_j\rangle=\overline{R}_{ij}$ and $$
	L_{\mathcal{A}}(\bm\omega) (e_i) = \sum_{j=1}^N \omega_j \langle e_i, r_j\rangle r_j = \sum_{j=1}^N \sum_{u=1}^n \omega_j  \overline{R}_{ij} R_{uj} e_u.
	$$
	Therefore the matrix of $L_{\mathcal{A}}(\bm\omega)$ in the basis $(e_i)$ is given by $R \Omega \overline{R}^T$
	where $\Omega:=\mathrm{diag}(\omega_1,\dots,\omega_N)$. For a matrix $M$ and $I,J$ subsets of lines and columns, write $M_{I}^{J}$ for the submatrix formed by keeping lines in $I$ and columns in $J$.	We expand
	\begin{align*}
		\det (x+R\Omega \overline{R}^T) &= \sum_{k=0}^n x^{n-k}
		\sum_{1\leq i_1<\dots<i_k \leq n} \det (R\Omega \overline{R}^T)_{i_1,\dots,i_k}^{i_1,\dots,i_k}\\
%		&	=\sum_{k=0}^n x^{n-k}
%		\sum_{1\leq i_1<\dots<i_k \leq n}\  
%		\sum_{1\leq j_1 < \dots <j_k \leq N}
%\det (R\Omega)_{i_1,\dots,i_k}^{j_1,\dots,j_k}
%\det (R)_{i_1,\dots,i_k}^{j_1,\dots,j_k}\\
&	=\sum_{k=0}^n x^{n-k}
		\sum_{1\leq i_1<\dots<i_k \leq n}\  
		\sum_{1\leq j_1 < \dots <j_k \leq N}
		\omega_{j_1}\dots \omega_{j_k}
\det (R)_{i_1,\dots,i_k}^{j_1,\dots,j_k}
\det (\overline{R})_{i_1,\dots,i_k}^{j_1,\dots,j_k}\\
&	=\sum_{k=0}^n x^{n-k}
		\sum_{1\leq j_1 < \dots <j_k \leq N}
		\omega_{j_1}\dots \omega_{j_k}
\det (R^T\overline{R})_{j_1,\dots,j_k}^{j_1,\dots,j_k}
.
	\end{align*}
	where the second and third equalities are applications of the Cauchy-Binet formula.
The proof is complete, since 
	$\langle r_i, r_j\rangle = \sum_{u=1}^n R_{ui} \overline{R}_{uj} = (R^T \overline{R})_{ij}$.
\end{proof}

By grouping together the terms of the right hand side in this lemma with respect to the linear space spanned by the $r_{i_j}$, we immediately get the following recursive formula. Recall that the \emph{pseudodeterminant} of an operator is the product of its nonzero eigenvalues and that $\mathcal{A}_X$ denotes the \emph{localization} of an arrangement $\mathcal{A}$ over a flat $X$; that is, $\mathcal{A}_X:=\{H\in\mathcal{A}:\ X\subset H\}$. Moreover, we keep the choices (see Defn.~\ref{Defn: A-Laplacian}) of the vectors $r_i$ we made for $\mathcal{A}$, also for the $\mathcal{A}_X$-Laplacian $L_{\mathcal{A}_X}(\bm\omega)$.

\begin{proposition}[$\mathcal{A}$-Laplacian recursion]\label{prop:arr_rec}\ \newline
If $\mathcal{A}$ is a hyperplane arrangement and $L_{\mathcal{A}}(\bm\omega)$ its Laplacian matrix as above, we have
$$\op{det}\big(x+L_{\mathcal{A}}(\bm\omega)\big)=\sum_{X\in\mathcal{L}_{\mathcal{A}}}\op{pdet}\big(L_{\mathcal{A}^{}_X}(\bm\omega)\big)\cdot x^{\op{dim}(X)},$$ where $\mathcal{L}_{\mathcal{A}}$ denotes the intersection lattice of $\mathcal{A}$ and $\op{pdet}$ stands for \emph{pseudodeterminant}.
\end{proposition}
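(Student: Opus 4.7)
The proposal is to apply Lemma~\ref{Lem:Burman M-T} twice, once to the full arrangement $\mathcal{A}$ and once to each localization $\mathcal{A}_X$, and then to match terms by grouping the sum in Lemma~\ref{Lem:Burman M-T} according to the flat spanned by the chosen normals. Given an independent subset $\{r_{i_1},\dots,r_{i_k}\}$ appearing there, set $X:=H_{i_1}\cap\dots\cap H_{i_k}$, a flat of $\mathcal{L}_\mathcal{A}$ of codimension $k$ and dimension $n-k$. Conversely, such a subset is precisely a basis (a maximal independent subset) of the matroid of normals of $\mathcal{A}_X$, because $\bigcap_{H\in\mathcal{A}_X}H=X$ forces the rank of the normals of $\mathcal{A}_X$ to equal $\op{codim}(X)=k$. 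Thus the coefficient of $x^{n-k}$ on the right-hand side of Lemma~\ref{Lem:Burman M-T} applied to $\mathcal{A}$ equals
$$
\sum_{X\in\mathcal{L}_\mathcal{A},\ \op{codim}(X)=k}\ \sum_{\{r_{i_1},\dots,r_{i_k}\}\text{ basis of }\mathcal{A}_X}\omega_{i_1}\cdots\omega_{i_k}\cdot\det\big(\langle r_{i_s},r_{i_t}\rangle\big)_{s,t=1}^{k}.
$$

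It remains to identify the inner sum with $\op{pdet}\big(L_{\mathcal{A}_X}(\bm\omega)\big)$. Since $L_{\mathcal{A}_X}(\bm\omega)$ is a sum of rank-$1$ operators with images in the lines $\mathbb{C} r_i$ for $H_i\in\mathcal{A}_X$, its image lies in $X^\perp$, of dimension $k$. Hence $\det\big(x+L_{\mathcal{A}_X}(\bm\omega)\big)$ is divisible by $x^{n-k}$, and the coefficient of $x^{n-k}$ is precisely the product of the nonzero eigenvalues of $L_{\mathcal{A}_X}(\bm\omega)$, i.e.\ the pseudodeterminant. On the other hand, applying Lemma~\ref{Lem:Burman M-T} directly to $\mathcal{A}_X$, the same coefficient is computed as the sum over size-$k$ independent subsets of normals of $\mathcal{A}_X$, which by the rank observation coincide with the bases. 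This matches the inner sum and yields the proposition after summing over $k$.

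I do not expect a serious obstacle here, as the argument is essentially a bookkeeping exercise on Lemma~\ref{Lem:Burman M-T}. The one point that must be handled with care is consistency of the normal vectors: the $r_i$'s used in $L_{\mathcal{A}_X}(\bm\omega)$ must be the very same vectors chosen for $L_{\mathcal{A}}(\bm\omega)$ (as the preamble to Proposition~\ref{prop:arr_rec} explicitly stipulates), so that the Grammian factors $\det(\langle r_{i_s},r_{i_t}\rangle)$ on both sides of the identification agree term by term; any reshuffling would spoil the equality. Once this is kept in mind, the correspondence between independent $k$-subsets of $\{r_i\}_{i=1}^N$ and $(\text{flat }X)\times(\text{basis of }\mathcal{A}_X)$ pairs delivers the recursion directly.
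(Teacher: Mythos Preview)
Your proposal is correct and follows essentially the same approach as the paper's own proof: apply Lemma~\ref{Lem:Burman M-T} to $\mathcal{A}$, group the independent subsets according to the flat $X$ they determine, and then recognize the inner sum as the lowest-degree coefficient of $\det(x+L_{\mathcal{A}_X}(\bm\omega))$ via a second application of Lemma~\ref{Lem:Burman M-T} to $\mathcal{A}_X$. Your explicit remark about keeping the same choice of normal vectors $r_i$ for the localizations is exactly the consistency condition the paper stipulates just before the proposition.
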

\begin{proof}
Every subset of the $r_i$'s determines a collection of hyperplanes $H_i$ whose intersection is some flat $X$; then these $r_i$ span the orthogonal complement $X^{\perp}$. By grouping together all those collections $\{r_{i_j}\}$ with respect to their associated flat $X$, we can rewrite lemma~\ref{Lem:Burman M-T} as $$\op{det}\big(x+L_{\mathcal{A}}(\bm\omega)\big)=\sum_{X\in\mathcal{L}_{\mathcal{A}}}\ \ \sum_{\op{sp}(r_{i_j})=X^{\perp}}\omega_{i_1}\cdots \omega_{i_k}\cdot\op{det}\big(\langle r_{i_s},r_{i_t}\rangle\big)_{s,t=1}^k\cdot x^k,$$ where of course $k=\op{dim}(X)$. Notice now that the inner summation is exactly equal to $\op{pdet}\big(L_{\mathcal{A}^{}_X}(\bm\omega)\big)$. Indeed, this is again lemma~\ref{Lem:Burman M-T} but applied to the arrangement $\mathcal{A}_X$, keeping in mind that the pseudodeterminant of some operator $A$ equals the smallest degree nonzero coefficient of the characteristic polynomial $\op{det}(x+A)$.
\end{proof}

\begin{remark}\label{Rem: Brieskorn's lemma}
The previous proposition is reminiscent of Brieskorn's lemma \cite[Lem.~5.91]{OT} which relates the Poincare polynomial of the complement of $\mathcal{A}$ with those for the localizations $\mathcal{A}_X$: $$\op{Poin}(V\setminus\mathcal{A},t)=\sum_{X\in\mathcal{L}_{\mathcal{A}}}\op{rank}\big(H^{\op{top}}(V\setminus\mathcal{A}_X)\big)\cdot t^{\op{dim}(X)}.$$ After the identification with the characteristic polynomial of the arrangement this is trivial, but the original lemma in fact shows a natural decomposition of the corresponding cohomology spaces. Could proposition~\ref{prop:arr_rec} be interpreted in a similar way?
\end{remark}

\subsection{A Matrix-Forest theorem for well generated complex reflection groups}

The $\mathcal{A}$-Laplacian we introduced in this section is a natural generalization of the $W$-Laplacian of reflection groups, even when the order of the unitary reflections is greater than $2$. Recall that $\mathcal{A}_W$ denotes the reflection arrangement of $W$.

\begin{proposition}\label{Prop: W-Laplacian <= A-Laplacian}
The (weighted) $W$-Laplacian can always be realized as an $\mathcal{A}_W$-Laplacian.\end{proposition}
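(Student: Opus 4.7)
The plan is to exploit the fact that each reflection of $W$, even when its order is greater than $2$, contributes to $L_W(\bm\omega)$ through a rank-$1$ operator whose image lies in the normal line to its reflecting hyperplane. Since the $\mathcal{A}_W$-Laplacian already carries one scalar coefficient per hyperplane, it suffices to absorb all reflections sharing a given hyperplane into a single effective weight.

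First, I would fix, once and for all, for each $H\in\mathcal{R}^*$, a Hermitian-orthogonal vector $r_H$ normal to $H$ (any normalization will do). Recall from~\eqref{EQ: decomposition of R into cyclic groups} that the parabolic subgroup $W_H$ is cyclic, so that the reflections through $H$ are the non-identity powers $\tau_H^k$, $1\le k\le s_H-1$, of a generator with non-trivial eigenvalue $\zeta_H$. Each operator $\mathbf{I}_n-\rho_V(\tau_H^k)$ vanishes on $H$ and scales $r_H$ by the factor $(1-\zeta_H^k)$; hence
$$
\mathbf{I}_n-\rho_V(\tau_H^k) \;=\; (1-\zeta_H^k)\, P_H,
\qquad P_H(v):=\frac{\langle v,r_H\rangle}{\langle r_H,r_H\rangle}\, r_H.
$$

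Next, I would regroup the defining sum of $L_W(\bm\omega)$ by hyperplane to obtain
$$
L_W(\bm\omega)\;=\;\sum_{H\in\mathcal{R}^*}\alpha_H\cdot P_H,
\qquad
\alpha_H:=\sum_{k=1}^{s_H-1}\mathbf{w}(\tau_H^k)\,(1-\zeta_H^k)\ \in\ \mathbb{C}.
$$
On the other hand, with the vectors $r_H$ fixed above, Defn.~\ref{Defn: A-Laplacian} gives the identity $\mathbf{I}_n-S_{r_H}=\langle r_H,r_H\rangle\, P_H$, so that the $\mathcal{A}_W$-Laplacian with hyperplane weights $\omega'_H$ equals $\sum_H \omega'_H\,\langle r_H,r_H\rangle\, P_H$.

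The proof is then concluded by setting $\omega'_H:=\alpha_H/\langle r_H,r_H\rangle$ for every $H\in\mathcal{R}^*$; this is a well-defined complex weight system on $\mathcal{A}_W$, and by construction $L_W(\bm\omega)=L_{\mathcal{A}_W}(\bm\omega')$. There is no real obstacle: the only structural input is the cyclicity of the parabolic subgroups $W_H$, which reduces the problem to comparing two rank-$1$ operators with the same image.
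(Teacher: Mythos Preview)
Your argument is correct and follows essentially the same route as the paper: both proofs decompose the set of reflections by hyperplane via~\eqref{EQ: decomposition of R into cyclic groups}, observe that each $\mathbf{I}_n-\rho_V(\tau_H^k)$ is a scalar multiple of a fixed rank-$1$ projector onto the normal line of $H$, and then absorb the resulting scalar into a single hyperplane weight. The only cosmetic difference is that the paper normalizes the vectors $r_H$ to have $\langle r_H,r_H\rangle=1$ from the start (so that $\mathbf{I}_n-S_{r_H}$ coincides with your $P_H$), whereas you keep the normalization arbitrary and compensate by dividing by $\langle r_H,r_H\rangle$ in the final choice of $\omega'_H$.
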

\begin{proof}
Comparing the two Definitions~\ref{Defn: W-Laplacian matrix} and~\ref{Defn: A-Laplacian}, notice that if $\zeta$ is the unique $\zeta\neq 1$ eigenvalue of $\rho^{}_V(\tau)$ and $r^{}_{\tau}$ a $\zeta$-eigenvector of norm $\langle r^{}_{\tau},r^{}_{\tau}\rangle=1$, we have an identity between rank-1 operators $$\mathbf{I}_n-\rho^{}_V(\tau)=(1-\zeta)\cdot\big(\mathbf{I}_n-S_{r^{}_{\tau}}\big).$$
Now, we can pick a generator $\tau_H$ of each $W_H$ as in \eqref{EQ: decomposition of R into cyclic groups} and for an arbitrary weight system $\wt$, we may write $$L_W(\bm\omega)=\sum_{H\in\mathcal{R}^*}\sum_{i=1}^{e_H-1}\wt(\tau_H^i)\big(\mathbf{I}_n-\rho_V(\tau_H^i)\big)=\sum_{H\in\mathcal{R}^*}\Big(\sum_{i=1}^{e_H-1}\wt(\tau_H^i)\cdot (1-\zeta_H^i)\Big)\cdot\big( \mathbf{I}_n-S_{r^{}_H}\big).$$ Here again we have chosen vectors $r^{}_H$ orthogonal to $H$ with $\langle r^{}_H,r^{}_H\rangle=1$ and for the second equality we are using that the matrices $\rho^{}_V(\tau_H^i)$, $i=1\dots e_H-1$, all have common eigenspaces. This is a direct way to assign weights for the $\mathcal{A}_W$-Laplacian with norm-1 vectors so that it agrees with a weighted $W$-Laplacian. 

In the case that the weight system is defined via some tower $T$ \eqref{Eq: parabolic tower}, then all reflections $\tau_H^i$, $i=1\dots e_H-1$, are assigned the same weight, so the above construction would be simplified either by setting weights $\omega_H=e_H\cdot\mathbf{w}^{}_T(\tau_H)$ or by having the exact same weights $\omega_H=\mathbf{w}^{}_T(\tau_H)$ but choosing orthogonal vectors with norms $\langle r^{}_H,r^{}_H\rangle=e_H$. 
\end{proof}

The following theorem is now a direct application of the recursion of Prop.~\ref{prop:arr_rec} and Corol.~\ref{cor:reduced}.

\begin{theorem}[$W$ Matrix Forest Theorem]\label{Thm: WMFT}
For a well generated complex reflection group $W$ and a parabolic tower $T$, the characteristic polynomial of its $T$-weighted $W$-Laplacian is given via
$$\op{det}\big(x+L_W^T(\bm\omega)\big)=\sum_{\tau_1\cdots\tau_{n-k}=c^{}_X\atop  X\in\mathcal{L}_W,\ k=\op{dim}(X)}|C^{}_{W_X}(c^{}_X)|\cdot\mathbf{w}^{}_T(\tau_1)\cdots\mathbf{w}_T^{}(\tau_{n-k})\cdot\dfrac{x^k}{(n-k)!},$$ where the sum is over all (reduced) reflection factorizations of \emph{any} Coxeter element $c^{}_X$ of \emph{any} parabolic subgroup $W_X$ of $W$ and $C_{W_X}(c_X)$ is the centralizer of $c_X$ in $W_X$.
\end{theorem}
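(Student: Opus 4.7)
The plan is to derive Theorem~\ref{Thm: WMFT} by combining three ingredients already in place: the $\mathcal{A}$-Laplacian recursion (Proposition~\ref{prop:arr_rec}), the identification of the $W$-Laplacian with an $\mathcal{A}_W$-Laplacian (Proposition~\ref{Prop: W-Laplacian <= A-Laplacian}), and the ``weighted Matrix-Tree'' formula for parabolic subgroups (Corollary~\ref{cor:reduced}). The three are stitched together flat by flat in $\mathcal{L}_W$.

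First, I would view $L_W^T(\bm\omega)$ as an $\mathcal{A}_W$-Laplacian through Proposition~\ref{Prop: W-Laplacian <= A-Laplacian} (the parabolic weights are constant on each orbit $\{\tau_H,\dots,\tau_H^{s_H-1}\}$, so the reduction is clean) and apply Proposition~\ref{prop:arr_rec}. This gives
\begin{equation*}
\det\big(x+L_W^T(\bm\omega)\big)=\sum_{X\in\mathcal{L}_W}\op{pdet}\big(L_{\mathcal{A}_X}^T(\bm\omega)\big)\cdot x^{\dim(X)}.
\end{equation*}
Next, for each flat $X$ I would identify the pseudodeterminant on the left with a genuine determinant. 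Every summand $\mathbf{I}_n-S_{r_i}$ in $L_{\mathcal{A}_X}^T(\bm\omega)$ vanishes on $X$ and has image in $X^\perp$, so $L_{\mathcal{A}_X}^T(\bm\omega)$ is block-diagonal with respect to $V=X\oplus X^\perp$, trivial on $X$. By Steinberg's theorem $W_X$ is the reflection group whose reflection representation is $X^\perp$, and the restriction of $L_{\mathcal{A}_X}^T(\bm\omega)$ to $X^\perp$ is exactly the $W_X$-Laplacian $L_{W_X}^{T_X}(\bm\omega)$ weighted by the restriction of the parabolic system (since the weight of a reflection $\tau\in W_X\subset W$ only depends on the flat $V^\tau$ and this data is inherited by the induced tower $T_X$ of parabolics of $W_X$ obtained by intersecting $T$ with $W_X$). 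Hence
\begin{equation*}
\op{pdet}\big(L_{\mathcal{A}_X}^T(\bm\omega)\big)=\det L_{W_X}^{T_X}(\bm\omega).
\end{equation*}

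The third step is to apply Corollary~\ref{cor:reduced} to each $W_X$. Parabolic subgroups of a well-generated complex reflection group decompose as direct products $W_X=W_1\times\cdots\times W_m$ of well-generated irreducible complex reflection groups, so Theorem~\ref{thm:main} applies to each factor. Multiplying together the identities of Theorem~\ref{thm:main} for the $W_i$ (the eigenvalues of $L_{W_X}$ are the disjoint union of those of the $L_{W_i}$, and the generating series multiplies as an EGF since factorizations of $c_X=c_1\cdots c_m$ are exactly interleavings of factorizations of the $c_i$), one obtains the reducible version of Theorem~\ref{thm:main} in which the factor $1/h$ is replaced by $1/\prod_i h_i=1/|C_{W_X}(c_X)|$. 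Extracting the top coefficient in $t$ then yields the reducible extension of Corollary~\ref{cor:reduced}:
\begin{equation*}
\det L_{W_X}^{T_X}(\bm\omega)=\frac{|C_{W_X}(c_X)|}{(n-k)!}\sum_{c_X\in\mathcal{C}(W_X)}\ \sum_{\tau_1\cdots\tau_{n-k}=c_X}\mathbf{w}_T^{}(\tau_1)\cdots\mathbf{w}_T^{}(\tau_{n-k}),
\end{equation*}
where $n-k=\op{codim}(X)$. Substituting this into the recursion and regrouping the sum as running over all triples $(X,c_X,(\tau_i))$ gives exactly the claimed formula.

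The only genuine obstacle is the reducible extension of Theorem~\ref{thm:main}/Corollary~\ref{cor:reduced} needed in the last step; once one verifies that both $\mathcal{F}_W^T$ and the right-hand side of~\eqref{eq:mainThm} are multiplicative under direct products (which is the compatibility of the EGF of factorizations with disjoint unions, the block-diagonal structure of the $W$-Laplacian, and the product formula $|C_{W_X}(c_X)|=\prod_i h_i$), the rest is a straightforward assembly. Everything else is bookkeeping: tracking that the induced tower $T_X$ produces the same weights on the reflections of $W_X$ as the original $T$, and that the sum of $\det L_{W_X}^{T_X}(\bm\omega)\cdot x^{\dim X}$ over $X\in\mathcal{L}_W$ recovers the stated combinatorial expression after organizing the triple sum $X\to c_X\to(\tau_1,\dots,\tau_{n-k})$.
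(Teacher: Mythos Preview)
Your proposal is correct and follows essentially the same route as the paper's own proof: identify $L_W^T(\bm\omega)$ with an $\mathcal{A}_W$-Laplacian, apply the $\mathcal{A}$-Laplacian recursion flat by flat, and then invoke the reducible extension of Corollary~\ref{cor:reduced} on each $W_X$ via the multiplicativity of $\mathcal{F}_W^T$ under direct products together with $|C_{W_X}(c_X)|=\prod_i h_i$. The paper handles the pseudodeterminant step slightly more tersely (simply noting that the localization $L_{\mathcal{A}_X}(\bm\omega)$ equals $L_{W_X}^T(\bm\omega)$), but your explicit block-diagonal argument on $V=X\oplus X^\perp$ is the same content.
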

\begin{proof}
As we showed in Prop.~\ref{Prop: W-Laplacian <= A-Laplacian} (see also the last paragraph in the proof), we may identify the $W$-Laplacian $L_W^T(\bm\omega)$ with the $\mathcal{A}_W$-Laplacian $L_{\mathcal{A}_W}(\bm\omega)$ with the same weight system if we choose the orthogonal vectors to have norms $\langle r_H,r_H\rangle=e_H$.
After Prop.~\ref{prop:arr_rec} we need only show that $$\op{pdet}\big(L_{\mathcal{A}_X}(\bm\omega)\big)=|C_{W_X}(c^{}_X)|\cdot\big(\sum_{\tau_1\cdots\tau_{n-k}=c^{}_X}\mathbf{w}^{}_T(\tau_1)\cdots \mathbf{w}^{}_T(\tau_{n-k})\big)\cdot\dfrac{1}{(n-k)!},$$ where the sum is over all Coxeter elements $c^{}_X$ of the parabolic subgroup $W_X$.

This required identity is just Corol.~\ref{cor:reduced} for the possibly reducible parabolic subgroup $W_X$. Notice that by standard properties of exponential generating functions, if $W_X=W_1\times\cdots \times W_s$ is the decomposition into irreducibles, we will have (with a slight abuse of notation) that $$\mathcal{F}_{W_X}^T(t,\bm\omega)=\prod_{i=1}^s\mathcal{F}_{W_i}^T(t,\bm\omega).$$ Moreover, the construction of Prop.~\ref{Prop: W-Laplacian <= A-Laplacian} is compatible with localizations; that is, $L_{\mathcal{A}_X}(\bm\omega)=L^T_{W_X}(\bm\omega)$ where we just take the restriction of both weight systems on $\mathcal{A}_X$ and $\mathcal{R}\cap W_X$ respectively (the latter is still a tower system, induced by $T\cap W_X$). Now, we only need to check that the factor $|C_{W_X}(c_X)|$ is equal to the product of the Coxeter numbers associated to the irreducible components of $W_X$ (as that would instead be the factor prescribed by Corol.~\ref{cor:reduced}). Indeed, this is an immediate consequence of the fact that the centralizers of Coxeter elements $c$ in irreducible groups $W$ are the cyclic groups $\langle c\rangle$, see \eqref{EQ: |C|=|W|/h}.\end{proof}

\begin{remark}\label{Rem: W-M-F thm}
The previous theorem might appear intuitive, but it is highly non-trivial. There is no reason that the combinatorics of \emph{reduced} reflection factorizations of parabolic Coxeter elements should be encoded by the same object (the eigenvalues of the $W$-Laplacian) that also encodes the combinatorics of arbitrary length factorizations of Coxeter elements. In particular, the partial products in such \emph{higher genus} factorizations need not be parabolic Coxeter elements.
\end{remark}

\subsection{An identity among parabolic Coxeter numbers}
\label{Sec: identity multiset Cox nums}

The recursion of Prop.~\ref{prop:arr_rec} for the \emph{unweighted} $W$-Laplacian $L_W$ gives a remarkable relation between the Coxeter numbers of $W$ and its parabolic subgroups $W_X$. Of course, there is not a \emph{single} Coxeter number associated to $W_X$; even though $W_X$ is indeed a reflection group, it might be reducible in which case the irreducible components might have different Coxeter numbers.

In fact, this relation is more easily phrased in terms of the {\color{blue} \emph{multiset $\{h_i(W)\}$ of Coxeter numbers}} of a reflection group $W$, which we construct as the collection of each Coxeter number $h_i$ of an irreducible component $W_i$ of $W$ taken with multiplicity equal to $\op{rank}(W_i)$. Equivalently (after Prop.~\ref{Prop: L_W has all eigenvalues =h}) the multiset $\{h_i(W)\}$ consists of the eigenvalues, counted with multiplicity, of the $W$-Laplacian $L_W$.

Now, as we discussed in the proof of Thm.~\ref{Thm: WMFT}, if we choose for the reflection arrangement $\mathcal{A}_W$ orthogonal vectors with norms $\langle r_H,r_H\rangle=e_H$, $H\in\mathcal{A}_W$, then the (unweighted) Laplacians $L_W$ and $L_{\mathcal{A}_W}$ are equal. The induced localized Laplacians $L_{(\mathcal{A}_W)_X}$ are then also equal to the $W_X$-Laplacians $L_{W_X}$ and Prop.~\ref{prop:arr_rec} immediately gives the following.

\begin{theorem}\label{Prop: identity among coxeter numbers}
For any irreducible complex reflection group $W$ with Coxeter number $h$, we have
$$(h+x)^n=\sum_{X\in\mathcal{L}_W}\prod_{i=1}^{\op{codim}(X)}h_i(W_X)\cdot x^{\op{dim}(X)},$$ where $\{ h_i(W_X)\}$ denotes the \emph{multiset} of Coxeter numbers of $W_X$ as described previously.
\end{theorem}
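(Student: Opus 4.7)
The plan is to apply the $\mathcal{A}$-Laplacian recursion of Proposition~\ref{prop:arr_rec} to the reflection arrangement $\mathcal{A}_W$ with the specific normalization already used in the proof of Theorem~\ref{Thm: WMFT}: for each $H \in \mathcal{A}_W$ pick a normal $r_H$ with $\langle r_H, r_H \rangle = e_H$, where $e_H = |W_H|$. With these choices the construction of Proposition~\ref{Prop: W-Laplacian <= A-Laplacian} forces the equality $L_{\mathcal{A}_W} = L_W$ between the unweighted $\mathcal{A}$-Laplacian and the unweighted $W$-Laplacian. Moreover this normalization is local: for any flat $X \in \mathcal{L}_W$, the localization $(\mathcal{A}_W)_X$ is exactly the reflection arrangement of the parabolic $W_X$ with the induced choice of normals, so $L_{(\mathcal{A}_W)_X} = L_{W_X}$.

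Granting these identifications, the recursion of Proposition~\ref{prop:arr_rec} reads
$$\det\big(x + L_W\big) \;=\; \sum_{X \in \mathcal{L}_W} \op{pdet}\big(L_{W_X}\big)\cdot x^{\dim X}.$$
The left-hand side is immediate: by Proposition~\ref{Prop: L_W has all eigenvalues =h} the operator $L_W$ is scalar multiplication by $h$, so $\det(x + L_W) = (x+h)^n$, matching the left-hand side of the statement.

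It remains to identify $\op{pdet}(L_{W_X})$ with $\prod_{i=1}^{\op{codim}(X)} h_i(W_X)$. Decompose $W_X = W_1 \times \cdots \times W_s$ into irreducible components, giving an orthogonal decomposition $V = X \oplus V_1 \oplus \cdots \oplus V_s$, where $V_i$ is the reflection representation of $W_i$ of dimension $\op{rank}(W_i)$. The operator $L_{W_X}$ annihilates $X$ (every reflection in $W_X$ fixes $X$ pointwise) and preserves each $V_i$; its restriction to $V_i$ is precisely $L_{W_i}$, which by Proposition~\ref{Prop: L_W has all eigenvalues =h} is the scalar $h_i$ on $V_i$. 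Hence the nonzero spectrum of $L_{W_X}$ is exactly the multiset $\{h_i(W_X)\}$ and $\op{pdet}(L_{W_X}) = \prod_{i=1}^{\op{codim}(X)} h_i(W_X)$, as required.

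No obstacle of real substance appears: the theorem is essentially a direct consequence of Propositions~\ref{prop:arr_rec} and~\ref{Prop: L_W has all eigenvalues =h} once one observes that both the identification $L_W = L_{\mathcal{A}_W}$ and the scalar description of the $W$-Laplacian on each irreducible reflection representation are compatible with localization at a flat. The only point to be checked carefully is the compatibility of the normalization $\langle r_H, r_H\rangle = e_H$ with the parabolic structure, but this is manifest since the generators $r_H$ do not depend on $W$ but only on the hyperplane $H$, and the parabolic $W_X$ has the same stabilizers $W_H$ for each $H \supset X$ as $W$ does.
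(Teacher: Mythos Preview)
Your proof is correct and follows essentially the same approach as the paper: normalize the $\mathcal{A}_W$-Laplacian with $\langle r_H,r_H\rangle=e_H$ so that it coincides with the unweighted $W$-Laplacian (and likewise for every localization), then apply the recursion of Proposition~\ref{prop:arr_rec} and read off both sides via Proposition~\ref{Prop: L_W has all eigenvalues =h}. You spell out the computation of $\op{pdet}(L_{W_X})$ via the irreducible decomposition of $W_X$ a bit more explicitly than the paper, which simply invokes its definition of the multiset $\{h_i(W_X)\}$ as the eigenvalues of $L_{W_X}$, but the argument is the same.
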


\begin{remark}
In a separate article \cite{chapuy_theo_lapl} by the same authors, the previous Theorem~\ref{Prop: identity among coxeter numbers} is the main ingredient for a \emph{uniform} (type independent), \emph{combinatorial} proof of the unweighted version of Corol.~\ref{cor:reduced}. This is of course the Arnol'd-Bessis-Chapoton formula \eqref{eq:introDeligne} of the introduction, or equivalently the enumeration of maximal chains in the non-crossing lattice $NC(W)$. 
\end{remark}

\begin{remark}
If we chose orthogonal vectors with norms $\langle r_H,r_H\rangle=1$ or $\langle r_H,r_H\rangle=e_H-1$, we would get analogs of the previous proposition involving instead of Coxeter numbers $h$, the quantities $|\mathcal{R}|/n$ and $|\mathcal{R^*}|/n$ respectively. The first one is precisely the number $t/2$ that appeared in \cite[\S~6.5]{OT} and was a main ingredient in calculating the size of the restricted arrangements $\mathcal{A}_W^H$ (for real $W$ we have $h=t$ and this was further used to show that $\mathcal{A}^H$ is free).
\end{remark}

\subsection{Calculating the volumes of root polytopes}
\label{Sec: root polytopes}

The determinant of the $\mathcal{A}$-Laplacian can be seen after the \emph{unweighted version} of Lemma~\ref{Lem:Burman M-T} as a \emph{scaled} sum over $\mathbb{C}$-bases $\bm{r}$ formed by the orthogonal vectors $r_i$ of Defn.~\ref{Defn: A-Laplacian}. Indeed, each such basis is scaled by the Grammian $\op{det}(\langle r_i,r_j\rangle)$ or equivalently by the squared volume $|\op{det}(\bm{r})|^2$. This setting is almost, but not quite, what one would need to compute the volume of the zonotope associated to $\mathcal{A}$. We show here how, particularly for Weyl groups $W$, we can indeed calculate the volume of the associated root polytope. 

Recall that, given a collection of \emph{real} vectors $X=\{r_i\}$, we define the {\color{blue} zonotope} $B(X)$ as the Minkowski sum of the segments $[0,r_i]$ (see for instance the classical references \cite[\S~7.3]{ziegler}, \cite[Ch.~9]{Beck_Sinai}, or \cite[\S~2.3]{dCP}). It is a fundamental theorem of Shephard \cite[Thm.~54]{shephard} that each zonotope has a paving by parallelepipeda indexed by all possible bases $\bm{r}$ formed by elements of $X$. In particular, this implies the formula \begin{equation}
\op{Vol}\big(B(X)\big)=\sum_{\bm r}|\op{det}(\bm r)|,\label{EQ: vol of zonotopes}
\end{equation} by Shephard and McMullen \cite[(57)]{shephard}. Now, if the zonotope $B(X)$ is unimodular (meaning that all bases $\bm r$ satisfy $|\op{det}(\bm r)|=1$), then we can immediately calculate its volume via \begin{equation}\op{Vol}\big(B(X)\big)=\op{det}\big(L_{\mathcal{A}(X)}\big)=\sum_{\bm r}|\op{det}(\bm r)|^2,\label{EQ: vol via A-Lapl}\end{equation} where $\mathcal{A}(X)$ is the arrangement of hyperplanes orthogonal to the vectors in $X$. Indeed, this is a well-known observation \cite[Prop.~2.52]{dCP} where we usually see the matrix $L_{\mathcal{A}(X)}$ defined as $XX^t$ (recall the comments after Defn.~\ref{Defn: A-Laplacian}).

For crystallographic reflection groups $W$, one defines the {\color{blue} root zonotope $Z_W$} as the zonotope $B(X)$ associated to the collection $X$ of positive roots of $W$.  It is a translation \cite[Thm.~2.56]{dCP} of the convex hull $P_W(\rho)$ of the $W$-orbit of the half-sum $\rho$ of positive roots (such polytopes $P_W(x)$ are called Coxeter (or $W$-) permutahedra \cite[Defn.~2.1]{hohlweg} or weight polytopes \cite[\S~4]{postnikov}). 

	For the symmetric group $S_n$, we recover the classical permutahedron \cite[Example~0.10]{ziegler} for which we know \cite[Ex.~4.64]{EC1} the (normalized) volume to be $n^{n-2}$. Indeed, for $S_n$ the root zonotope is essentially unimodular (we have $|\op{det}(\bm r)|=\sqrt{n}$ for all bases $\bm r$ ) so that its volume can easily be computed after \eqref{EQ: vol via A-Lapl} and via the $S_n$-Laplacian, whose determinant is $n^{n-1}$ (there is some analogy with the situation in~\cite{Kalai} here).

For the other crystallographic types however, the root systems are not unimodular and the volumes of the corresponding zonotopes do not factor nicely. Still, there is some level of control; the contribution as in \eqref{EQ: vol of zonotopes} of each basis $\bm r$ from $X$ is completely determined by the reflection subgroup $W'\leq W$ generated by the reflections $S_{r_i}$, $r_i\in \bm r$. Indeed, Baumeister and Wegener have shown \cite[Cor.~1.2]{BW} that if a collection of reflections $\{S_{r_i}\}$, $i=1\dots n$, generates a Weyl group $W'$ of rank $n$, then the roots $r_i$ and coroots $\widehat{r_i}$ form, respectively, $\mathbb{Z}$-bases of the root and coroot lattices $Q'$ and $\widehat{Q'}$ of $W'$. Now, all $\mathbb{Z}$-bases $\bm r$ of a lattice determine parallelepipeda of the same volume $|\op{det}(\bm r)|$, which is for this reason called the \emph{volume} of the lattice. In other words, and for zonotopes $Z_W$, the Shephard-McMullen formula \eqref{EQ: vol of zonotopes} can be written as
\begin{equation}\op{Vol}(Z_W)=\sum_{W'\leq_{\op{max}}W}g(W')\cdot \op{vol}(Q'),\label{EQ: vol(Z_W)}\end{equation} where the summation is over all \emph{maximal rank} reflection subgroups $W'$ of $W$, $Q'$ denotes the root lattice of the (Weyl) group $W'$, and where $g(W')$ is the number of $n$-sets $\{S_{r_i}\}$ that generate $W'$.

In the following, we take advantage of this property and give a formula for the volume of root zonotopes $Z_W$ in terms of the Coxeter numbers and root theoretic data of its maximal rank reflection subgroups. Even though these volumes have been calculated before (see \cite[Thm.~4.2]{postnikov}, \cite[Thm.~3.4]{dCP_zon}, and \cite[Prop.~4.2]{eur}) our approach yields an arguably more explicit answer. We note however that the previously mentioned works do much more and calculate the volumes of arbitrary $W$-permutahedra $P_W(x)$.

\begin{theorem}\label{Thm: volume of Z_W}
The volume of the associated root zonotope $Z_W$ of a Weyl group $W$ is given by
$$\op{Vol}(Z_W)=\sum_{W''\leq W}\ \Big(\prod_{i=1}^nh_i(W'')\Big) \sum_{W''\leq W'\leq W}\mu(W',W'')\cdot\dfrac{1}{\op{vol}(\widehat{Q'})},$$ where the sum is over all \emph{maximal rank} reflection subgroups $W''$ of $W$, ordered by inverse inclusion, and where $\widehat{Q'}$ is the coroot lattice of $W'$ and $\{h_i(W'')\}$ the multiset of Coxeter numbers of $W''$.
\end{theorem}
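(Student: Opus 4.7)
The plan is to start from the Shephard-McMullen formula~\eqref{EQ: vol(Z_W)}, $\op{Vol}(Z_W) = \sum_{W' \leq_{\op{max}} W} g(W')\,\op{vol}(Q')$, and to recover the factor $1/\op{vol}(\widehat{Q'})$ appearing in the statement through a Möbius inversion on the poset $\mathcal{M}$ of maximal-rank reflection subgroups of $W$ (ordered by inclusion). The data to which the inversion will be applied are the determinants $\det(L_{W''})$, which equal $\prod_{i=1}^n h_i(W'')$: for irreducible $W''$ this is immediate from Proposition~\ref{Prop: L_W has all eigenvalues =h}, and for a reducible $W'' = W_1 \times \cdots \times W_k$ it follows from the same proposition applied to each factor, since $L_{W''}$ is then block-diagonal with blocks $h_j\cdot \mathbf{I}_{n_j}$.

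The technical heart of the argument is to express $\det(L_{W''})$ as a sum indexed by the elements of $\mathcal{M}$ sitting below $W''$. The $W''$-Laplacian factorises as $L_{W''} = A B^T$, where $A$ and $B$ are the $n\times|\Phi^+(W'')|$ matrices whose columns are respectively the positive roots $\alpha_H$ and the positive coroots $\widehat{\alpha}_H = 2\alpha_H/\langle\alpha_H,\alpha_H\rangle$ of $W''$; this is a direct check using $L_{W''}(v) = \sum_H(v - S_H(v)) = \sum_H \langle v, \widehat{\alpha}_H\rangle\, \alpha_H$. The Cauchy-Binet formula (in the spirit of Lemma~\ref{Lem:Burman M-T}) then gives
$$\det(L_{W''}) = \sum_{S \subset \Phi^+(W''),\ |S|=n} \det(A_S)\det(B_S),$$
and for any basis $S$ generating a reflection subgroup $W' \in \mathcal{M}$, the theorem of Baumeister--Wegener \cite{BW} identifies $S$ as a $\mathbb{Z}$-basis of $Q'$ and $\widehat{S}$ as a $\mathbb{Z}$-basis of $\widehat{Q'}$, so $|\det(A_S)| = \op{vol}(Q')$ and $|\det(B_S)| = \op{vol}(\widehat{Q'})$. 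A short sign check via $B_S = A_S D_S$, with $D_S$ diagonal and positive, shows $\det(A_S)\det(B_S) = \det(A_S)^2\det(D_S) = \op{vol}(Q')\op{vol}(\widehat{Q'})$; regrouping the Cauchy-Binet sum by $W' = \langle S\rangle$ then produces the key identity
$$\det(L_{W''}) = \sum_{W' \in \mathcal{M},\ W' \leq W''} g(W')\, \op{vol}(Q')\, \op{vol}(\widehat{Q'}).$$

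The remainder is routine. Möbius inversion on $\mathcal{M}$ yields $g(W')\op{vol}(Q')\op{vol}(\widehat{Q'}) = \sum_{W'' \leq W'} \mu(W'',W') \det(L_{W''})$; dividing through by $\op{vol}(\widehat{Q'})$, substituting back into Shephard-McMullen, exchanging the order of summation, and translating the Möbius convention from natural inclusion to its inverse (so that $\mu(W'',W')$ becomes the $\mu(W',W'')$ of the statement) completes the proof. The main obstacle I anticipate is the bookkeeping of the Cauchy-Binet step --- ensuring that every basis $S \subset \Phi^+(W'')$ is counted exactly once with the right subgroup $W' = \langle S \rangle$ and the correct multiplicity $g(W')$, and that the root/coroot sign issue is cleanly resolved --- rather than any deep new idea.
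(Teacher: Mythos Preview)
Your proposal is correct and follows essentially the same approach as the paper. Both arguments establish the key identity $\prod_{i=1}^n h_i(W'') = \det(L_{W''}) = \sum_{W' \leq_{\op{max}} W''} g(W')\cdot I(W')$ with $I(W') = \op{vol}(Q')\op{vol}(\widehat{Q'})$, then apply M\"obius inversion and substitute back into Shephard--McMullen; the only cosmetic difference is that the paper reaches the connection index $I(W')$ by normalising all roots to squared length~$2$ and identifying the resulting Grammian determinant with the Cartan-matrix determinant, whereas you factor $L_{W''} = AB^T$ directly with root and coroot columns and read off $\op{vol}(Q')\op{vol}(\widehat{Q'})$ via Cauchy--Binet and Baumeister--Wegener.
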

\begin{proof}
The quantity $g(W')$ of \eqref{EQ: vol(Z_W)}, which counts sets of $n$ reflections that generate $W'$, can be re-expressed in terms of Coxeter numbers. Indeed, after the proof of Prop.~\ref{Prop: W-Laplacian <= A-Laplacian}, if we normalize the root vectors so that $\langle \tilde{r}_i,\tilde{r}_i\rangle=2$, the corresponding $\mathcal{A}_W$-Laplacian equals the $W$-Laplacian. Then,
$$ \prod_{i=1}^nh_i(W)=\op{det}(L_{W})=\op{det}(L_{\mathcal{A}_W})=\sum_{\bm r}\op{det}\big(\langle\tilde{r}_i,\tilde{r}_j\rangle\big)_{i,j=1}^n=\sum_{W'\leq_{\op{max}}W}g(W')\cdot I(W'),$$
where the first equality is Prop.~\ref{Prop: L_W has all eigenvalues =h} and the third equality comes after Lemma~\ref{Lem:Burman M-T}. For the last equality, we apply first the Baumeister-Wegener theorem discussed previously and notice that the matrix $\big(\langle\tilde{r}_i,\tilde{r}_j\rangle\big)_{i,j=1}^n$ and the Cartan matrix $\big(\langle r_i,\widehat{r_j}\rangle\big)_{i,j=1}^n$ of $W'$ have the same determinant, which is denoted $I(W')$ and is known as the \emph{connection index} of $W'$. A Mobius inversion on the poset of maximal rank reflection subgroups of $W'$ gives then that $$g(W')=\dfrac{1}{I(W')}\cdot\sum_{W''\leq_{\op{max}}W'}\mu(W',W'')\cdot\prod_{i=1}^nh_i(W''),$$ which plugged in \eqref{EQ: vol(Z_W)} immediately gives the theorem after writing $I(W')=\op{vol}(Q')\cdot \op{vol}(\widehat{Q'})$.
\end{proof}

\begin{remark}
Following Stanley's \cite[Thm.~2.2]{stanley} generalization of the Shephard-McMullen formula \eqref{EQ: vol of zonotopes}, one can easily extend Thm.~\ref{Thm: volume of Z_W} above and calculate the whole Ehrhart polynomial of $Z_W$ (but we will not elaborate on this here). In the classical types, where $\CC$-bases of root systems correspond to signed trees, this has been done for instance in \cite{ardila2020}.
\end{remark}

\begin{example}[The volume of the root polytope for $E_6$]\ \newline
The case of $E_6$ is a particularly nice example where our Theorem~\ref{Thm: volume of Z_W} gives a much faster calculation of the root polytope volume than existing techniques. In \cite{DPR}, the authors have compiled substantial information for the reflection subgroups of finite Coxeter groups. In particular for $E_6$, this is sufficient to determine its poset of maximal rank reflection subgroups; it contains a minimal element ($E_6$ itself) and only atoms: $36$ groups of type $A_1\times A_5$ and $40$ groups of type $A_2\times A_2\times A_2$.

The multisets of Coxeter numbers of the groups $E_6$, $A_1A_5$, and $A_2^3$, are respectively $\{12^6\}$, $\{2,6^5\}$, and $\{3^6\}$ (where exponents denote multiplicities). Their connection indices are $3$, $12=2\cdot 6$, and $27=3^3$, respectively, and they all satisfy (being simply laced) $\op{vol}(Q')=\op{vol}(\widehat{Q'})=\sqrt{I(W')}$. After the previous theorem, the volume of the root zonotope $Z_{E_6}$ will then be given as \begin{alignat*}{2}\op{Vol}(Z_{E_6})=&12^6\cdot\dfrac{1}{\sqrt{3}}+36\cdot (2\cdot 6^5)\cdot\big(\dfrac{1}{\sqrt{12}}-\dfrac{1}{\sqrt{3}}\big) + 40\cdot 3^6\cdot \big(\dfrac{1}{\sqrt{27}}-\dfrac{1}{\sqrt{3}}\big)&\\
=&\sqrt{3}\cdot 895536,&
\end{alignat*}which precisely agrees with the data in \cite[\S~3,~p.~512]{dCP_zon} (DeConcini and Procesi normalize their numbers so that the volume of the root lattice is $1$, which in the simply laced types has the effect of dividing the final answer by $\sqrt{I(W)}$, which in the case of $E_6$ is $\sqrt{3}$).
\end{example}

\subsection{Trees versus Coxeter factorizations in reflection groups}
\label{sec: Trees vs Facns}

In the traditional setting, the (pseudo-)determinant of the unweighted graph Laplacian $L(K_n)$ equals $n^{n-2}$ and counts labeled trees on $n$ vertices as well as minimal length factorizations of a given long cycle $c\in S_n$ in transpositions. We have seen in this section that the two combinatorial objects have generalizations for reflection groups which are both counted somehow by the $W$-Laplacian, even allowing certain weights. 

On one side, the factorizations of long cycles generalize to reduced reflection factorizations of Coxeter elements and Corol.~\ref{cor:reduced}, allowing Jucys-Murphy weights, gives us 
\begin{equation}
\sum_{\tau_1\cdot \tau_2\cdots\tau_n =c \atop \tau_i\in\mathcal{R},\ c\in \mathcal{C}} \mathbf{w}^{}_T(\tau_1) \mathbf{w}^{}_T(\tau_2)\cdots\mathbf{w}^{}_T(\tau_n)\cdot h\cdot\dfrac{1}{n!}=
	\det L_W^T(\bm\omega), \label{EQ: sec.8 L_W<->facns}
\end{equation} where $n$ is the rank of $W$ and $h$ its Coxeter number. On the other hand, Burman's Lemma~\ref{Lem:Burman M-T} expresses the $W$-Laplacian (and after Prop.~\ref{Prop: W-Laplacian <= A-Laplacian}) as a weighted sum over all $\CC$-bases $\bm r$ formed by vectors $r_i$ orthogonal to the reflection hyperplanes $H\in \mathcal{R}^*$ of $W$. These $\CC$-bases $\bm r$ can be considered natural versions of {\color{blue} $W$-trees} and the following a Laplacian enumeration for them:
$$\sum_{\bm r:=\{r_{i^{}_1},\dots,r_{i^{}_n}\}}\omega_{i^{}_1}\cdots\omega_{i^{}_n}\cdot\op{det}\big(\langle r_{i^{}_j}, r_{i^{}_k}\rangle\big)=\op{det}\big(L_W(\bm\omega)\big),$$
where moreover, there is no restriction on the weights $\omega_i$. The Grammian determinant $\op{det}\big(\langle r_{i^{}_j}, r_{i^{}_k}\rangle\big)$ might look unappealing here, but at least in the case of Weyl groups we already saw that it can be further interpreted. There, the previous equation becomes
\begin{equation}
\sum_{\bm r:=\{r_{i^{}_1},\dots,r_{i^{}_n}\}}\omega_{i^{}_1}\cdots\omega_{i^{}_n}\cdot I\big(W(\bm r)\big)=\op{det}\big(L_W(\bm\omega)\big),\label{EQ: sec.8 L_W<->trees}
\end{equation}
where $W(\bm r)$ is the (maximal rank, reflection) subgroup of $W$ generated by those reflections with associated roots $r_i$ and $I(W')$ denotes the connection index of a Weyl group $W'$.

These two equations \eqref{EQ: sec.8 L_W<->facns} and \eqref{EQ: sec.8 L_W<->trees} form thus direct generalizations of the two interpretations of the number $n^{n-2}$ discussed above; each with its own advantages and disadvantages. The former gives a direct enumerative analog but doesn't allow arbitrary weights, while the latter doesn't exactly count $W$-trees but a certain nice statistic on $W$-trees.

In the case of the symmetric group $S_n$ the left hand sides of \eqref{EQ: sec.8 L_W<->facns} and \eqref{EQ: sec.8 L_W<->trees} are directly related (and for arbitrary weights) after an argument of Denes \cite{Denes}. A key property here is that, in $S_n$, starting with a $\CC$-basis $\bm r$, the product of the corresponding reflections $\tau_{r_i}$ \emph{in any order} will always be some long cycle. Moreover, in $S_n$, \emph{there are no proper rank-$n$ reflection subgroups and the connection index $I(S_n)$ equals the Coxeter number of $S_n$}. 

For general reflection groups this is no longer true; the Coxeter number does not equal the connection index, there are many proper maximal rank reflection subgroups, and a $W$-tree does not always give you a Coxeter factorization. Still however, \emph{factorizations and $W$-trees}, both are counted via the $W$-Laplacian! Maybe we can ask then, for parabolic weight systems $\mathbf{w}^{}_T$ (and this would give a uniform proof of Corol.~\ref{cor:reduced} at least for Weyl groups $W$):

\begin{question}\label{Q: Cox-facs vs W-trees}
Is there a direct way to assign (perhaps multiple) factorizations of Coxeter elements to $\CC$-independent tuples of roots so that the left hand sides of \eqref{EQ: sec.8 L_W<->facns} and \eqref{EQ: sec.8 L_W<->trees} agree?
\end{question}

\section{Some further discussion}
\label{sec:further}

We would like to record here some further thoughts about this work, open problems and relations to other areas, as well as possible generalizations. The first, most obvious comment addresses the reliance of our main theorems on the classification. Even though we feel that the tower equivalence of Thm.~\ref{thm:equivalence} and the results of Section~\ref{sec:LieLike} give a proper explanation of the product formula of Thm.~\ref{thm:main}, it is still unclear \emph{why} the equivalence of Thm.~\ref{thm:equivalence} holds between the virtual characters.

\begin{open}\footnote{While this paper was going through the review process, Jean Michel has informed us that he was able to extend his work \cite{Michel} and give a proof of our Theorem~\ref{thm:equivalence} for \emph{Weyl groups} that does not rely on the classification \cite{JM_uniform}.}
Give a case-free proof of Thm.~\ref{thm:main} or, equivalently, Thm.~\ref{thm:equivalence}.
\end{open} 

Recently, case-free arguments have been provided for the Chapuy-Stump formula \cite{CS}, which is the unweighted version of Thm.~\ref{thm:main}, for Weyl groups by Jean Michel \cite{Michel}, and for the general case by the second author \cite{Douvr}. In both papers, the exterior powers of the reflection representation play an important role, similar to our Thm.~\ref{thm:equivalence} and after Corol.~\ref{cor:filterCoxeterNumbers}.

\subsection{Relations with the Okounkov-Vershik approach to the symmetric group}
\label{Section End: Okounkov-Vershik approach}

Our initial discovery of Thm.~\ref{thm:main} was motivated to a large extent by the work of Okounkov and Vershik \cite{OV}. They made a complete analysis of the representation theory of the symmetric group $S_n$ by studying the commutative algebra $\CC[\bm J]$ generated by the usual Jucys-Murphy elements $$J_i:=(1i)+(2i)+\cdots + (i-1,i).$$ It was because of their work that we considered weights defined by parabolic towers $T$ and we paid particular attention to the eigenspace decomposition \eqref{Eq: Gelfand-Tsetlin decomposition} induced by $T$. Various authors have tried to generalize the Okounkov-Vershik approach to other reflection groups, notably \cite{marin} and \cite{Stem} with different levels of success.

As it happens, not all properties of the usual Jucys-Murphy elements survive in our general setting. An obvious one is that the restriction of characters down the tower $T$ is not always multiplicity free \eqref{Eq: mult(towerchi)}, which in particular means that the Gelfand-Tsetlin algebra $GZ(T)$ associated to $T$ is not always a maximal commutative subalgebra of $W$. Stembridge in fact notes \cite[Rem.~2.3]{Stem} that in $E_8$ there is no tower of \emph{reflection} subgroups for which the restriction is multiplicity free.  Furthermore however, our algebra $\CC[\bm J_T]$ itself, generated by the (generalized) Jucys-Murphy elements, does not always agree with $GZ(T)$. That is, both inclusions below may be proper ones
\begin{equation}\CC[\bm J_T]\subsetneq \underbrace{\langle Z\big(\CC[W_1]\big),\cdots,Z\big(\CC[W_n]\big)\rangle}_{GZ(T)}\subsetneq \bigoplus_{\chi\in\widehat{W}} \CC^{\op{dim}(\chi)}.\label{EQ: Ivan Marin}\end{equation}

In fact, our Theorem~\ref{thm:equivalence} shows that, outside of $S_n$, at least one of these inclusions \emph{must} be proper. Indeed, if $\CC[\bm J_T]$ was a maximal commutative subalgebra, it would include the center $Z(\CC[W])$ and no two (different) virtual characters could be equal on it (as in our Thm.~\ref{thm:equivalence}). Beyond our study of their spectrum in Prop.~\ref{Prop: bounds on spectrum of JM elements}, there are therefore various questions one might ask regarding these generalized Jucys-Murphy algebras. For instance:
\begin{enumerate}
\item What is, for a given tower $T$, the dimension of the Jucys-Murphy algebra $\CC[\bm J_T]$?
\item What is the intersection of $\CC[\bm J_T]$ with the algebra $GZ(T)$? or only the center $Z(\CC[W])$? 
\item What is the dimension of the space of class functions modulo tower equivalence?
\end{enumerate}

\subsection{Our Thm.~\ref{thm:main} and the combinatorics of the $W$-Laplacian in \S~\ref{sec:combinatorial} }
\label{Sec: end: W-Lapl-combinatorics}

The combinatorial interpretation (in \S~\ref{sec:combinatorial}) of the eigenvalues of the $W$-Laplacian $L_W^T(\bm\omega)$ allows for various nice applications of our Thm.~\ref{thm:main}. For instance, one could identify many of the weights $\omega_i$, in effect considering non-maximal towers $T$ \eqref{Eq: parabolic tower}. We could even pick a parabolic subgroup $W_X$ and assign only two weights, $\omega_1$ and $\omega_2$, recording respectively if a reflection $\tau\in\mathcal{R}$ belongs to $W_X$ or not. Then, if $F_{W,X}$ counts the number of \emph{reduced} reflection factorizations of Coxeter elements using only reflections $\tau\in\mathcal{R}\cap W\setminus W_X$, Corol.~\ref{cor:reduced} and Prop.~\ref{Prop: comb. descr. of Spec of L_W^T(w)} give us (setting $\omega_1=0$ and $\omega_2=1$) \begin{equation}
F_{W,X}=\dfrac{1}{h}\cdot \Big( \prod_{i=1}^{\op{dim}(X)}\big(h-h_i(W_X)\big)  \Big)\cdot h^{\op{codim}(X)}\cdot n!,\end{equation} where as usual $\{ h_i(W_X)\}$ denotes the multiset of Coxeter numbers of $W_X$ (see \S~\ref{Sec: identity multiset Cox nums}). This could be seen as a (broad but partial) generalization of \emph{star factorizations} in the symmetric group; namely factorizations of elements of $S_n$ in the transpositions $(in)$, $i=1\dots n-1$. These were introduced by Pak \cite{Pak_star} and have been of interest to the community; see in particular \cite{Feray_star}. 

Furthermore, Griffeth has shown \cite[Thm.~A.1.(a)]{griffeth} that for any irreducible \emph{reflection} subgroup $W'\lneq W$, there is a strict inequality between the Coxeter numbers $h(W)>h(W')$ (in fact, with a uniform proof!). The previous discussion implies then that there exist (even reduced) factorizations of Coxeter elements $c\in W$ using none of the reflections in some (in this case \emph{parabolic}) subgroup $W'$. On the other hand, no Coxeter element can live in a proper reflection subgroup of $W$ (see the proof of \cite[Corol.~3.9]{Douvr}) so that we immediately get the following statement.

\begin{corollary}
For any well generated reflection group $W$ and parabolic subgroup $W_X\leq W$, the reflections in $W\setminus W_X$ generate all of $W$.
\end{corollary}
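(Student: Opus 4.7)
The plan is to combine two ingredients already laid out in the preceding paragraph. The first is the count $F_{W,X}$ of reduced reflection factorizations of Coxeter elements that avoid $W_X$, obtained from Corollary~\ref{cor:reduced} with weights $\omega_1=0$ on reflections in $W_X$ and $\omega_2=1$ on reflections in $\mathcal{R}\setminus W_X$, together with the combinatorial spectrum formula of Proposition~\ref{Prop: comb. descr. of Spec of L_W^T(w)}. The second is the fact that no Coxeter element of $W$ lies in a proper reflection subgroup, as used in the proof of \cite[Corollary~3.9]{Douvr}.

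First, I would reduce to the case $W_X\lneq W$, since the statement is otherwise vacuous. Writing $W_X = W_{X_1}\times\cdots\times W_{X_s}$ as a product of irreducibles, each $W_{X_j}$ is an irreducible parabolic subgroup of $W$, strictly contained in the irreducible component of $W$ in which it sits. The strict inequality $h(W)>h(W')$ for proper irreducible parabolic subgroups $W'\subsetneq W$, cited just above the corollary, then applies componentwise and shows that every entry of the multiset $\{h_i(W_X)\}$ is strictly smaller than $h$. Hence every factor $h - h_i(W_X)$ in the displayed formula for $F_{W,X}$ is strictly positive, so $F_{W,X}>0$.

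Second, the positivity of $F_{W,X}$ produces at least one reduced reflection factorization $c=\tau_1\tau_2\cdots\tau_n$ of some Coxeter element $c\in\mathcal{C}$ with every $\tau_i\in\mathcal{R}\setminus W_X$. The reflection subgroup $\langle\mathcal{R}\setminus W_X\rangle$ therefore contains $c$, and since no Coxeter element of $W$ sits inside a proper reflection subgroup, we conclude $\langle\mathcal{R}\setminus W_X\rangle=W$. The only step calling for any care is the componentwise application of the Coxeter-number inequality when $W_X$ is reducible, which is immediate; no genuine obstacle arises, since all the substantive content has already been assembled in the preceding sections.
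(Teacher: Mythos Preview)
Your proposal is correct and follows essentially the same approach as the paper: use the explicit formula for $F_{W,X}$ together with the strict inequality $h>h_i(W_X)$ to force $F_{W,X}>0$, and then invoke the fact that Coxeter elements cannot lie in proper reflection subgroups. One small remark: your componentwise treatment allowing $W$ to be reducible is unnecessary (and in fact the corollary fails for reducible $W$, e.g.\ $W=W_1\times W_2$ with $W_X=W_1$); the paper's context, with a single Coxeter number $h$, is implicitly that of irreducible $W$, and your argument is already complete in that case.
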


\noindent Notice that this is not true if for instance one replaces \emph{parabolic} with \emph{reflection} subgroup. A first counterexample is already $B_2$ with the reflection subgroup $A_1^2$.

Another immediate consequence of Corol.~\ref{cor:reduced} and Prop.~\ref{Prop: comb. descr. of Spec of L_W^T(w)} is that the weighted sum \begin{equation}\sum_{\tau_1\cdot \tau_2\cdots\tau_n =c \atop \tau_i\in\mathcal{R},\ c\in \mathcal{C}} \wt(\tau_1) \wt(\tau_2)\cdots\wt(\tau_n)\label{EQ: end sum mult n!}\end{equation} is always a multiple of $n!$ if the weight system $\wt$ comes from a parabolic tower $T$ \eqref{Eq: parabolic tower} (by this we mean that each monomial that appears has a coefficient that is a multiple of $n!$). For certain  groups $W$ this is natural and holds for arbitrary weights $\wt$ as well. This is the case when any permutation of the terms in a reduced reflection factorization of a Coxeter element produces a new factorization of, again, a Coxeter element (but possibly different). 

For real reflection groups for instance, this holds precisely for the coincidental types $A_n$, $B_n$, and $H_3$ (this is by the standard interpretation of factorizations as trees for the first two and by an easy computer check for $H_3$, $D_4$, and the rest of the exceptional groups). For general reflection groups $W$ though, this is no longer true and the previous sum is not a multiple of $n!$ for arbitrary weights $\wt$. We ask therefore (compare also with Ques.~\ref{Q: Cox-facs vs W-trees}):
\begin{question}\label{Q: multiple of n!}
Is there a conceptual explanation why the sum \eqref{EQ: end sum mult n!}, with Jucys-Murphy weights \eqref{Eq: parabolic tower}, is always a multiple of $n!$?
\end{question}

\subsection{On possible analogs of the  Aldous conjecture for reflection groups}

In the early 90's Aldous stated a conjecture in probability theory that was only resolved about 20 years later by Caputo et al. \cite{proof_of_Aldous}. It demanded that the spectral gaps of two different processes on a finite graph, the random walk and the interchange process, are equal (in fact the original conjecture was for unweighted graphs, but Caputo et al. proved the weighted case).

It turns out that Aldous' conjecture can be rephrased in terms of the \emph{group algebra-} and $W$-Laplacians of the symmetric group $S_n$ (denoted $\mathbf{B}_{S_n}(\bm\omega)$ and $L_{S_n}(\bm\omega)$ respecively, see \S~\ref{subsec:Laplacians}). It is explained in \cite[Section~3]{cesi} that the conjecture is equivalent to showing that the smallest positive eigenvalue of $\mathbf{B}_{S_n}(\bm\omega)$ (i.e. under the regular representation) is always, for any \emph{positive} weights $\bm\omega$, one of the (and by necessity the smallest) eigenvalues of $L_{S_n}(\bm\omega)$.

Now, while the interchange process can easily be generalized to any group with an associated (perhaps generating) subset, the random walk process doesn't have immediate analogs. One might hope that the representation theoretic version for the two Laplacians still holds, but this is in fact incorrect. For $W=G_2=I_2(6)$, and weights $\bm\omega:=(\omega_1,\omega_2,\omega_3)$ defined by the tower of \emph{reflection} subgroups $A_1\leq A_1\times A_1\leq G_2$, the results of \S~\ref{Section:weight function by group tower} (especially \eqref{Eq: Spec_chi(A(w)) description} after simple calculations) give the following set of eigenvalues of $\mathbf{B}_{G_2}(\bm\omega)$ for the reflection representation $V_{ref}$ and the unique other 2-dimensional representation $U$ of $G_2$: 
\begin{equation}
\op{Spec}\big(L_{G_2}(\bm\omega)\big)=\begin{Bmatrix} 2\omega_1+2\omega_2+2\omega_3 \\ 6\omega_3 \end{Bmatrix}\quad , \quad \op{Spec}_U\big(B_{G_2}(\bm\omega)\big)=\begin{Bmatrix} 2\omega_1 + 4\omega_3 \\ 2\omega_2 + 4\omega_3 \end{Bmatrix}.
\end{equation}
It is now easy to see that for suitable choices of the $\omega_i$'s (for instance $\bm\omega=(3,1,2)$) the smallest eigenvalue of $\mathbf{B}_{G_2}(\bm\omega)$ appears in $U$.

On a different direction, Cesi has recently suggested in \cite{cesi_B} that for $W=B_n$ one might replace the random walk process with the coset permutation action for the subgroup $B_{n-1}\leq B_n$ (this is indeed the \emph{canonical} action of $B_n$ on the $2n$-element set $\{-n,\cdots,-1,1,\cdots,n\}$). He proves in the same work an analog of the Aldous' conjecture for $W=B_n$, where however he forces all $\binom{n}{2}$-many reflections of the form $(i\bar{j})$ to have zero weight \cite[Thm.~1.2]{cesi_B}.

For general reflection groups the coset actions for maximal parabolics do not behave as nicely as for types $A$ and $B$. However, it turns out that the (non trivial) irreducible components that appear in $\CC[B_n/B_{n-1}]$ all have (generalized) Coxeter numbers equal to $2n$, the Coxeter number of $B_n$ and common eigenvalue of the \emph{unweighted} Laplacian $L_{B_n}$. For well generated groups $W$, an easy check shows that the Coxeter number $h$ is in fact the smallest possible (generalized) Coxeter number for non-trivial irreducible representations (this is no longer true in the non well generated groups, already in $G_7$). This leads us to ask the following question:
\begin{question}
Let $W$ be a well generated complex reflection group and $\bm\omega:=(\omega_i)$ an arbitrary weight system of \emph{positive} real scalars on its reflections (that agrees on $\tau$ and $\tau^{-1}$). Is it true that the smallest eigenvalue of the representation Laplacian $\mathbf{B}_W(\bm\omega)$ will always appear in some representation $U$ of (generalized) Coxeter number equal to the (standard) Coxeter number $h$ of $W$?

\noindent If not, is the stronger assumption that the weights are induced by a parabolic tower $T$ as in \eqref{Eq: parabolic tower} sufficient?
\end{question}

\subsection{The category $\mathcal{O}$ of rational Cherednik algebras}

The (generalized) Coxeter numbers $c_{\chi}$ play an important role in defining the highest-weight structure for the category $\mathcal{O}$ of the rational Cherednik algebra of a reflection group $W$. The irreducible characters $\chi$ of $W$ index the standard objects of the category $\mathcal{O}$ and the numbers $c_{\chi}$ define an order $<$ on them via $\mu > \chi$ if $c_{\mu}-c_{\chi}\in\mathbb{Z}_{\geq 0}$.

Griffeth et al. \cite{GGJL} showed that this order may be replaced by a much coarser one that involves the same Coxeter-theoretic data of parabolic towers that we have considered in our work. Under this new coarser order, we have that $\mu > \chi$ if for all parabolic towers $T$ there are always two tuples $\towerchi:=(\chi_0,\chi_1,\cdots,\chi_n)\in\op{Res}_T(\chi)$ and $\underline{\bm\mu}:=(\mu_0,\mu_1,\cdots,\mu_n)\in\op{Res}_T(\mu)$ (recall Notation~\ref{Not: chi restricted down T}) for which $c_{\mu_i}-c_{\chi_i}\in \mathbf{Z}_{\geq 0}$ for all indices $i=1\dots n$.

A main ingredient in \cite{Douvr} for the proof of the Chapuy-Stump formula \eqref{EQ: CS-formula} was an argument saying that only characters $\chi$ with Coxeter numbers $c_{\chi}$ that are multiples of $h$ needed to be considered in the Frobenius lemma. We wonder whether, in this way or otherwise, one might use the category $\mathcal{O}$ structure to give a uniform proof of our Theorem~\ref{thm:equivalence}.

\subsection{Product formulas for finer weight systems}
\label{Sec: product formulas}

As we explain in \S~\ref{sec:LieLike}, the product structure of formula \ref{eq:mainThm} in our Thm.~\ref{thm:main} is a consequence of the agreement of the virtual characters $\sum_{\chi\in\widehat{W}}\chi(c^{-1})\cdot \chi$ and $\sum_{k=0}^n(-1)^k\chi_{\wedge^k V_{ref}}$ over the subalgebras $\CC[\bm J_T]$ for any choice of parabolic tower $T$. It turns out that in certain special cases we can replace the reflection representation $V_{ref}$ with other $n$-dimensional representations so that the corresponding virtual characters agree on larger subalgebras. Then, in the same way as in \S~\ref{sec:LieLike}, these representation-theoretic relations force product formulas for finer weight systems.

\subsubsection{The hyperoctahedral groups $W=B_n$}
In the case of $W=B_n$ for example, it is an easy check after \cite[\S~5]{CS}, that for the representation $U=\big((n-1,1),\emptyset\big)\oplus\big(\emptyset,(n)\big)$ we have in fact an equality $$\sum_{\chi\in\widehat{B_n}}\chi(c^{-1})\cdot \chi=\sum_{k=0}^n(-1)^k\chi_{\wedge^kU},$$ as virtual characters (that is, they agree on the whole group algebra $\CC[B_n]$). Moreover, for any reflection $\tau$, the element $(1-\tau)$ is \emph{Lie-like} for the representation $U$ also. This means that in the case of $B_n$ we have the following \emph{full generalization} of the Burman and Zvonkine formula:
\begin{equation}
\mathcal{F}_{B_n}^P(t,\bm\omega)=\dfrac{e^{t\cdot\wt(\mathcal{R})}}{2n}\prod_{i=1}^n(1-e^{-t\lambda_i}),
\end{equation} where $\wt$ is \emph{any} weight system (that is, induced by \emph{any} partition $P$ of the set of reflections $\mathcal{R}$, including the one in singletons) and where $\lambda_i(\bm\omega)$ are the complex eigenvalues of the $n\times n$ matrix $\rho_U\big(\sum_{\tau\in\mathcal{R}}\wt(\tau)\cdot (1-\tau)\big)$. It is an easy check that for generalized Jucys-Murphy weights these $\lambda_i(\bm\omega)$ agree with the eigenvalues of the $W$-Laplacian for $B_n$.

\subsubsection{The infinite family $W=G(r,1,n)$}
The case of $W=G(r,1,n)$ is slightly more complicated. There, the character equality becomes: $$\sum_{\chi\in\widehat{G(r,1,n)}}\chi(c^{-1})\cdot\chi=\sum_{k=0}^n(-1)^k\Big[\xi^{-1}\chi_{\wedge^kU_1}+\xi^{-2}\chi_{\wedge^k U_2}+\cdots +\xi^{1-r}\chi_{\wedge^k U_{r-1}}\Big],$$ where $\xi$ is a primitive $r$-th root of unity and  the $n$-dimensional representations $U_i$ are defined via $$U_i=\big((n-1,1),\emptyset,\cdots,\emptyset\big)\oplus\big(\emptyset,\cdots,\underbrace{(n)}_i,\cdots,\emptyset\big)$$ with the indexing starting at $i=0$. This does not immediately give a product formula, but it is easy to see that the representations $U_i$ all agree on the sums $(\tau+\tau^2+\cdots + \tau^{e_H-1})$ for any reflection $\tau$ of order $e_H$. We then have a simpler equivalence over the subalgebra generated by these sums and (since the elements $(1-\tau)$ are again \emph{Lie-like} for any $U_i$) it forces the product formula \begin{equation}
\mathcal{F}_{G(r,1,n)}^P(t,\bm\omega)=\dfrac{e^{t\cdot \wt(\mathcal{R})}}{rn}\prod_{i=1}^n(1-e^{-t\lambda_i}),
\end{equation} where $\wt$ is any weight system that respects the reflection hyperplanes (i.e. such that $\wt(\tau)=\wt(\tau')$ if $V^{\tau}=V^{\tau'}$) and where $\lambda_i(\bm\omega)$ are the eigenvalues of the matrix $\rho_{U_i}\big(\sum_{\tau\in\mathcal{R}}\wt(\tau)\cdot(1-\tau)\big)$ (for any $i$). Once more, it is an easy check that for parabolic Jucys-Murphy weights (which do indeed respect reflection hyperplanes) these agree with the eigenvalues of the $W$-Laplacian.

\subsubsection{The dihedral groups $I_2(m)$}
\label{Sec: end: dihedrals}

Here we have an example where weight systems induced by towers of \emph{reflection} subgroups (i.e. not necessarily parabolic) always give a product formula for $\mathcal{F}_W(t,\bm\omega)$. For the dihedral group $W=I_2(m)$, any such tower corresponds to a chain in the poset of divisors of $m$. That is, for any positive integers $\bm m:=(m_1,m_2,\cdots, m_k=m)$ such that $m_i|m_{i+1}$ we consider the tower $$T_{\bm m}:=\big(\{\mathbbl{1}\}\leq I_2(m_1)\leq I_2(m_2)\leq \cdots \leq I_2(m)\big).$$ As in \eqref{Eq: parabolic tower} it determines a weight system with complex weights $\bm\omega:=(\omega_i)_{i=1}^k$ via $\mathbf{w}_{\bm m}(\tau)=\omega_i$ if and only if $\tau\in I_2(m_i)\setminus I_2(m_{i-1})$. Then, it is not very difficult after \S~\ref{sec:dihedral} to see that the exponential generating function of $\mathbf{w}_{\bm m}$-weighted factorizations has a product formula \begin{equation}
\mathcal{F}_{I_2(m)}^{\bm m}(t,\bm \omega)=\dfrac{e^{t\cdot \mathbf{w}_{\bm m}(\mathcal{R})}}{m}\big(1-e^{-t\lambda_1(\bm\omega)}\big)\cdot \big(1-e^{-t\lambda_2(\bm\omega)}\big),
\end{equation} where $\lambda_1(\bm\omega)=m\cdot \omega_k$ and $\lambda_2(\bm\omega)=2m_1\cdot \omega_1 +\big(\sum_{i=2}^{k-1}2(m_i-m_{i-1})\cdot\omega_i\big) +(m-2m_{k-1})\cdot\omega_k$.

\subsubsection{Are there any further \emph{uniform} results?}
\label{sec: end: better version?}

Our main Theorem~\ref{thm:main} is in some sense a \emph{maximally uniform} result; if we allow finer weights, the enumeration might still be given by factoring formulas but the exponentials appearing in those will not be the eigenvalues of the $W$-Laplacian matrix. For that matter, there does not seem to be a way to choose a \emph{single} representation $U$, somehow uniformly defined for reflection groups (in fact, even if we only restrict to dihedral groups) and other than the reflection representation $V$, that would give finer results via the Lie-like elements method. One could choose however different qualitative characteristics to maximize. It is natural to ask:
\begin{enumerate}
\item Do we get a product formula when weights are defined by arbitrary reflection towers (i.e. not necessarily parabolic)? If yes, is there a uniform combinatorial description of the exponents?
\item What are the finest partitions of the set of reflections for a given group $W$ that lead to product formulas? Can they be characterized somehow?
\end{enumerate} 

%We end this section by displaying  below the graph of implications of our main Theorem~\ref{thm:main} which is a uniform statement for all well-generated groups $W$ and  weights given by a parabolic tower $T$.
%
%\begin{figure}[h]
%\center
%\includegraphics[scale=0.9]{posetFormulas.pdf}
%\caption{The poset of implications from Theorem~\ref{thm:main} down to formulas  \eqref{EQ: Jackson-formula}, \eqref{eq:introDeligne}, \eqref{EQ: Burman-Zvonkine}, and \eqref{EQ: CS-formula}, of the Introduction.}
%\label{Fig: poset of formulas}
%\end{figure}

\section*{Acknowledgments}
We would like to thank Ivan Marin for a very illuminating discussion regarding the Gelfand-Tsetlin decomposition for parabolic towers, especially the comments surrounding equation \eqref{EQ: Ivan Marin}. Also Daniel Juteau and Arun Ram for their remarks vis a vis Cherednik algebras and the category $\mathcal{O}$.

We are deeply grateful to Jean Michel who noticed a gap in Lemma~\ref{lemma:subgroup} in an earlier version of this paper. Moroever, after we fixed the gap, Jean Michel was able to extend his work \cite{Michel} and give a case-free proof of our Theorem~\ref{thm:equivalence} for Weyl groups \cite{JM_uniform}.

An extended abstract of this work was accepted as a poster for the 32nd International Conference on Formal Power Series and Algebraic Combinatorics (FPSAC) 2020 (that took place online because of the COVID-19 pandemic) and was published in the conference proceedings as \cite{FPSAC}.

\appendix

\section{Littlewood-Richardson rules for $\mathfrak{S}_n$,  $G(r,1,n)$ and $G(r,r,n)$}
\label{app:LRtheory}

In this appendix we recall the Littlewood-Richardson rules that enable one to compute restrictions of irreducible representations for the infinite families, via their combinatorial data. In Appendix~\ref{app:LRinstances} we will apply these rules to certain special cases that are used in the main body of the text. 

 We represent partitions by their Ferrer diagram in the \emph{French} convention, i.e. row lengths are nondecreasing from bottom to top. 

\subsection{The case of $\mathfrak{S}_n$}

Let $n \geq 1$, let $a\in \{1,...,n-1\}$ and $b=n-a$. For $\lambda \vdash n$ a partition, the irreducible representation $\lambda$ is {\it a priori} not irreducible when restricted to the subgroup $\mathfrak{S}_a\times \mathfrak{S}_{b}$. It splits, with possible multiplicities, into irreducible components of the form $\alpha \otimes \beta$, where $\alpha \vdash a$ and $\beta\vdash b$. The \emph{Littlewood-Richardson coefficient (LR-coefficient)} is the multiplicity in this restriction
$$
c^{\alpha, \beta}_\lambda := \left\langle \lambda \big|_{\mathfrak{S}_a\times \mathfrak{S}_{b}}, \alpha\otimes \beta \right\rangle_{\mathfrak{S}_a\times \mathfrak{S}_{b}}.
$$

The \emph{Littlewood-Richardson rule (LR-rule)} (see for example \cite[Chapter~7,~A1.3]{EC2}) enables one to compute the LR-coefficient $c^{\alpha, \beta}_\lambda$ as follows.

First, the coefficient $c^{\alpha, \beta}_\lambda$ is zero unless $\alpha$ is a subpartition of $\lambda$, meaning that the Ferrers diagram of $\alpha$ fits inside the one of $\lambda$. If this is the case, a semistandard tableau of (skew) shape $\lambda\setminus \alpha$ is an \emph{LR-tableau} if it has the property that reading the filling of its rows from bottom to top and from right to left, gives a Yamanouchi word -- that is to say, a word starting with the letter $1$, and such that in any prefix, the number $i$ occurs at least as many times as $i+1$, for any $i\geq 1$. Then the LR-rule says that the coefficient $c^{\alpha, \beta}_\lambda$ is equal to the number of LR-tableaux of shape $\lambda\setminus \alpha$ and content $\beta$.
See Appendix~\ref{appsec:LRapplications} for concrete applications of the rule.

\subsection{The case of $G(r,1,n)$}
\label{appsec:LRGr1n}

Let $n,r\geq 1$, $a\in [1..n]$, let $b=n-a$ and let $\vec \lambda=(\lambda^{(1)}, \lambda^{(2)}, ..., \lambda^{(r)})$ be an $r$-tuple of partitions of total size $n$. Thus $\vec \lambda$ indexes an irreducible representation of $G(r,1,n)$. 
The goal of this section is to describe the restriction of this representation  over the chain of subgroups:
$G(r,1,a)\times S_{b}\hookrightarrow G(r,1,a)\times G(r,1,b)\hookrightarrow G(r,1,n)$.

We first consider the subgroup $G(r,1,a)\times G(r,1,b)\hookrightarrow G(r,1,n)$.
%Let $\vec \alpha=(\alpha^{(1)}, \alpha^{(2)}, ..., \alpha^{(r)})$ and 
%$\vec \beta=(\beta^{(1)}, \beta^{(2)}, ..., \beta^{(r)})$
%be two $r$-tuple of partitions of total size $a$ and $b$, respectively. 
Irreducible representations of that subgroup are of the form $\vec \alpha \otimes \vec \beta$
where $\vec \alpha$ and $\vec \beta$ are two $r$-tuples of partitions of respective total sizes $a$ and $b$.
Using the description of characters of $G(r,1,a)$ it is easy to see  (see e.g. the appendix of \cite{CS} for this description, and~\cite{Stembridge} for the derivation in the case $r=2$) that the restriction of $\vec \lambda$ is given by
\begin{align}\label{appeq:parabolicLR2}
\vec\lambda\Big\downarrow^{G(r,r,n)}_{G(r,1,a)\times G(r,1,b)}
	=\sum_{|\vec \alpha|=a\atop |\vec \beta|=b}
	\left(\prod_{i=1}^{r} c^{\alpha^{(i)}, \beta^{(i)}}_{\lambda^{(i)}} \right) 
	\big(\alpha \otimes \vec \beta\big)
\end{align}
where the $c$'s are the classical LR-coefficients described the previous section, and where the sum is taken over $r$-tuples of partitions
$\vec \alpha=(\alpha^{(1)}, \alpha^{(2)}, ..., \alpha^{(r)})$ and 
$\vec \beta=(\beta^{(1)}, \beta^{(2)}, ..., \beta^{(r)})$
of respective total sizes $a$ and $b$, such that $|\alpha^{(i)}|+|\beta^{(i)}|=|\lambda^{(i)}|$ for each $i\in\{1,\dots,r\}$.
In words: {\it along the restriction $G(r,1,a)\times G(r,1,b)\hookrightarrow G(r,1,n)$, each coordinate of the $r$-tuple of partitions splits in to an $a$ and a $b$ component, according to the classical LR-rule.}

We now consider the subgroup $G(r,1,a)\times \mathfrak{S}_{b}\hookrightarrow G(r,1,a)\times G(r,1,b)$, whose irreducible representations are of the form $\vec \alpha\otimes \gamma$ with $\vec \alpha$ as above and $\gamma$ a partition of $b$.
%be two $r$-tuple of partitions of total size $a$ and $b$, respectively. Then $\vec \alpha \otimes \vec \beta$ indexes an irreducible representation of that subgroup.
Let $\vec \alpha=(\alpha^{(1)}, \alpha^{(2)}, ..., \alpha^{(r)})$ and 
$\vec \beta=(\beta^{(1)}, \beta^{(2)}, ..., \beta^{(r)})$
be two $r$-tuples of partitions of respective total size $a$ and $b$
Again using the description of characters of $G(r,1,a)$ (see the same references)
it is easy to see that the restriction of $\vec \alpha \otimes \vec \beta$ is given by
\begin{align}\label{appeq:parabolicLR3}
	\big( \vec\alpha \otimes \vec \beta\big)\Big\downarrow^{G(r,1,a)\times G(r,1,b)}_{G(r,1,a)\times \mathfrak{S}_{b}}
	=\sum_{\gamma\vdash b}  c^{\vec\beta}_{\gamma}  \cdot \big( \vec\alpha\otimes \gamma\big),
\end{align}
where 
\begin{align}\label{eq:LRgen}
	c^{\vec\beta}_{\gamma}=\left\langle \vec\beta \Big\uparrow^{\mathfrak{S}_b}_{\mathfrak{S}_{|\beta^{(1)}|}\times \mathfrak{S}_{|\beta^{(1)}|}\times ... \times \mathfrak{S}_{|\beta^{(r)}|}} , \gamma\right\rangle _{\mathfrak{S}_b}
	=\left\langle \gamma \Big\downarrow^{\mathfrak{S}_b}_{\mathfrak{S}_{|\beta^{(1)}|}\times \mathfrak{S}_{|\beta^{(1)}|}\times ... \times \mathfrak{S}_{|\beta^{(r)}|}} , \vec \beta \right\rangle _{\mathfrak{S}_{|\beta^{(1)}|}\times \mathfrak{S}_{|\beta^{(1)}|}\times ... \times \mathfrak{S}_{|\beta^{(r)}|}}
\end{align}
is a generalization of the LR-coefficient that describes restrictions from $\mathfrak{S}_b$ to the $r$-component Young subgroup $\mathfrak{S}_{|\beta^{(1)}|}\times ... \times \mathfrak{S}_{|\beta^{(r)}|}$.

Or course the generalized LR coefficient~\eqref{eq:LRgen} can be expressed in terms of the LR coefficients of the previous section via successive restrictions. However we will not need to do that, because the $r$-tuples $\vec \beta$ that we consider in this paper always have at most two non-empty coordinates. For the record, we note that for partitions $\rho^{(1)}, \rho^{(2)}, \rho^{(3)}$ such that $|\rho^{(1)}|=n$ and $|\rho^{(2)}|+|\rho^{(3)}|=n$ we have
$$c^{(\rho^{(1)}, \emptyset, ..., \emptyset)}_{\gamma} = \mathbf{1}_{\{\rho^{(1)}=\gamma\}} 
\mbox{ and }
c^{(\rho^{(2)}, \emptyset, ..., \rho^{(3)},\emptyset, ...)}_{\gamma} = c^{\rho^{(2)}, \rho^{(3)}}_{\gamma}.
$$

\subsection{The case of $G(r,r,n)$}
\label{appsec:LRGrrn}

For the group $G(r,r,n)$, we will not need any new rule of restriction. We just note that $G(r,r,n)$ being a subgroup of $G(r,1,n)$ the restrictions to $G(r,r,a)\times G(r,r,b)$ and $G(r,r,a)\times\mathfrak{S}_{b}$ are still given by \eqref{appeq:parabolicLR2} and \eqref{appeq:parabolicLR3}. The only difference is that the representations appearing in these formulas are not necessarily irreducible, since the $r$-tuples of partitions may have nontrivial cyclic symmetries.
However this will \emph{not} be a problem for us. The interested reader may consult~\cite{Stembridge} for details on how irreducible representations of $G(r,1,n)$ split when restricted to $G(r,r,n)$ for the case $r=2$ (higher values of $r$ are similar), but we will not need this.

\section{Instances of the LR-rule for $\mathfrak{S}_n$}\label{appsec:LRapplications}
\label{app:LRinstances}

In this appendix, we will use everywhere Notation~\ref{not:zero}, given in page~\pageref{not:zero}.

\subsection{Hooks}\label{appsec:LRhooks}

We compute the restriction of the hook $\hook{n}{k}$ from $\mathfrak{S}_n$ to $\mathfrak{S}_a \times \mathfrak{S}_{b}$ using the LR-rule. We could  easily compute it directly (using the exterior product point of view) but using the LR-rule will serve as a warm-up for the next section.

The character $\hook{n}{k}$ splits into a sum of characters of the form $\alpha\otimes \beta$, with multiplicities. They satisfy that $\alpha$ and $\beta$ are subpartitions of $\hook{n}{k}$, which implies that $\alpha$ and $\beta$ are hooks. For a hook $\alpha=\hook{a}{i}$,  the shape $\hook{n}{k}\setminus \alpha$ corresponds to at most two LR-tableaux, which are described on Figure~\ref{fig:LRhooks}. We obtain
\begin{align}\label{appeq:LRhook1}
\hook{n}{k} \Big\downarrow_{\mathfrak{S}_a \times \mathfrak{S}_{b}}
= \sum_{i\geq 0}\hook{a}{i} \otimes \big( \hook{b}{k-i-1} + \hook{b}{k-i} \big).
\end{align}
Note that we can rewrite this in the more symmetric form:
\begin{align}\label{appeq:LRhook2}
\hook{n}{k} \Big\downarrow_{\mathfrak{S}_a \times \mathfrak{S}_{b}}
= \sum_{i,j\geq 0, \epsilon \in \{0,1\} \atop i+j+\epsilon=k }\hook{a}{i} \otimes \hook{b}{j}.
\end{align}

\begin{figure}[h]
	\begin{center}
		\includegraphics[width=0.6\linewidth]{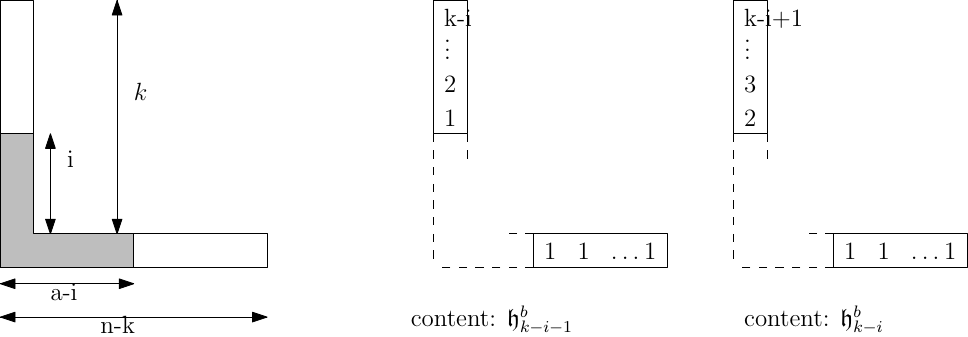}
		\caption{Application of the LR rule to compute the restriction of the hook $\hook{n}{k}$ to $\mathfrak{S}_a \times \mathfrak{S}_{b}$.
		%The hook splits into a sum of characters of the form $\alpha\otimes \beta$, with $\alpha$ and $\beta$ subpartitions of $\hook{n}{k}$. This implies that $\alpha$ is also a hook, that we parametrize as $\alpha=\hook{a}{i}$. 
%		To apply the LR-rule, list all the fillings of the shape $\hook{n}{k}\setminus \alpha$ whose inverse reading word is Yamanouchi (here only two). For each of them, the content gives a partition $\beta$ and a term $\alpha\otimes \beta$ in the splitting decomposition.
		}\label{fig:LRhooks}
	\end{center}
\end{figure}

\subsection{Quasi-hooks}\label{appsec:LRquasihooks}

We now study the restriction of the quasihook $\qhook{n}{k}$ from $\mathfrak{S}_n$ to $\mathfrak{S}_a \times \mathfrak{S}_{b}$.
It splits into terms of the form $\alpha\otimes \beta$, where $\alpha$ and $\beta$, being subpartitions of the quasihook, have to be either hooks or quasihooks.

We start with the case where $\alpha$ is a quasihook, say $\alpha=\qhook{a}{i}$. In this case, the shape $\qhook{n}{n}\setminus \alpha$, illustrated on Figure~\ref{fig:LRquasihooks}-top, is similar as the one studied in Section~\ref{appsec:LRhooks}. It gives rise, up to a change of $(n,a)$ to $(n-1,a-1)$, to the same LR-tableaux as described on Figure~\ref{fig:LRhooks} and we obtain for this first case a contribution of
$$
T_1= \sum_{i,j\geq 0, \epsilon \in \{0,1\} \atop i+j+\epsilon=k }\qhook{a}{i} \otimes \hook{b}{j}.
$$
By symmetry of LR coefficients, we will also obtain a term $T_2$, analogous to $T_1$ but with the quasihooks to the right of the tensor product. Finally we also see that we find no contributions where both $\alpha$ and $\beta$ are quasihooks, and we conclude that
$
\qhook{n}{k} \Big\downarrow_{\mathfrak{S}_a \times \mathfrak{S}_{b}} = T_1+T_2+T_3$, where
 $T_3$ is a linear combination of tensors involving \emph{hooks} only. 

It remains to compute $T_3$, so we let $\alpha=\hook{a}{i}$ be a hook and we want to list all the LR-tableaux of shape $\qhook{n}{n}\setminus \alpha$ whose content itself forms a hook. For $i>0$  there are at most four of them, two of them with the same content,  (Figure~\ref{fig:LRquasihooks}) while for $i=0$ there are at most two of them (Figure~\ref{fig:LRquasihooks2}). We get
\begin{align*}
	T_3&= \sum_{i\geq 1,j\geq 0, \epsilon \in \{-1,0,1\} \atop i+j+\epsilon=k } (1+\delta_\epsilon) \cdot \hook{a}{i} \otimes \hook{b}{j}
	+ \hook{a}{0} \otimes (\hook{b}{k-1}+\hook{b}{k})\\
	&= \sum_{i,j\geq 0, \epsilon \in \{-1,0,1\} \atop i+j+\epsilon=k } (1+\delta_\epsilon) \cdot \hook{a}{i} \otimes \hook{b}{j}
	- \hook{a}{0} \otimes (\hook{b}{k}+\hook{b}{k+1}).
\end{align*}

To summarize, we finally obtain:
\begin{proposition}The restriction of a quasihook representation of $\mathfrak{S}_n$ to the maximal Young subgroup $\mathfrak{S}_a \times \mathfrak{S}_{b}$ is given by
	\begin{align}
		\qhook{n}{k} \Big\downarrow_{\mathfrak{S}_a \times \mathfrak{S}_{b}} &=
		\sum_{i,j\geq 0, \epsilon \in \{0,1\} \atop i+j+\epsilon=k }\qhook{a}{i} \otimes \hook{b}{j} 
		+\sum_{i,j\geq 0, \epsilon \in \{0,1\} \atop i+j+\epsilon=k }\hook{a}{i} \otimes \qhook{b}{j} 
		+\sum_{i,j\geq 0, \epsilon \in \{-1,0,1\} \atop i+j+\epsilon=k }(1+\delta_\epsilon) \cdot\hook{a}{i} \otimes \hook{b}{j}\nonumber\\
		&\quad\quad- \hook{a}{0} \otimes (\hook{b}{k}+\hook{b}{k+1}).
	\end{align}
	\end{proposition}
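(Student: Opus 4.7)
The plan is to apply the Littlewood-Richardson rule directly to the quasihook shape $\qhook{n}{k}=(n-k-1,2,1^{k-1})$ and carefully enumerate, for each candidate $\alpha \vdash a$, the LR-tableaux of skew shape $\qhook{n}{k}\setminus\alpha$ whose content $\beta$ is itself a hook or quasihook. The first reduction is the easy observation that every subpartition of a quasihook is itself either a hook or a quasihook, so only four combinatorial cases can contribute to the restriction: $(\alpha,\beta) = (\text{quasihook}, \text{hook})$, $(\text{hook}, \text{quasihook})$, $(\text{hook}, \text{hook})$, or $(\text{quasihook}, \text{quasihook})$.

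Next I would dispatch the easy cases in order. For $\alpha = \qhook{a}{i}$, the skew shape $\qhook{n}{k}\setminus\qhook{a}{i}$ is obtained from the hook skew shape $\hook{n-1}{k}\setminus\hook{a-1}{i}$ of Section~\ref{appsec:LRhooks} by appending a single extra box in the first row; the LR-tableaux are therefore in the same bijection with pairs as in Figure~\ref{fig:LRhooks}, yielding $T_1 = \sum_{i+j+\epsilon=k,\,\epsilon\in\{0,1\}} \qhook{a}{i}\otimes\hook{b}{j}$. The case $(\text{hook},\text{quasihook})$ gives the symmetric term $T_2$ by the symmetry $c^{\alpha,\beta}_\lambda = c^{\beta,\alpha}_\lambda$ of Littlewood-Richardson coefficients (applied to the same ambient quasihook shape, which is self-symmetric in the relevant sense). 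For $(\alpha,\beta)=(\text{quasihook},\text{quasihook})$, the complement $\qhook{n}{k}\setminus\qhook{a}{i}$ has only $n-a$ cells distributed in such a way that no filling whose content is itself a quasihook can yield a Yamanouchi word when read bottom-to-top right-to-left, so the contribution vanishes.

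The heart of the argument, and the main obstacle, is Case D with $\alpha=\hook{a}{i}$ and $\beta=\hook{b}{j}$. Here I would enumerate by hand the LR-tableaux of shape $\qhook{n}{k}\setminus\hook{a}{i}$ whose content is a hook. For $i\geq 1$, the "elbow" in the quasihook creates exactly four admissible tableaux (two carrying distinct contents $\hook{b}{j}$ with $\epsilon\in\{-1,1\}$ and two with the same content $\hook{b}{j}$ when $\epsilon=0$, explaining the multiplicity $1+\delta_\epsilon$), as depicted in Figure~\ref{fig:LRquasihooks}; the Yamanouchi condition is the only nontrivial constraint and rules out the remaining semistandard fillings. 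For the boundary case $i=0$ the shape collapses and only two tableaux survive (Figure~\ref{fig:LRquasihooks2}), producing exactly the exceptional contribution that will become the corrective term $-\hook{a}{0}\otimes(\hook{b}{k}+\hook{b}{k+1})$ after one extends the generic-$i$ sum to all $i\geq 0$ and subtracts the overcount.

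Finally, I would assemble $T_1+T_2+T_3$, absorb the $i=0$ correction into the generic sum using the convention of Notation~\ref{not:zero}, and verify the resulting identity matches the proposition's statement. The bulk of the work lies in a careful, honest check of the Yamanouchi reading condition in Case D: missing a tableau or double-counting the symmetric $\epsilon=0$ case is the most natural error, so I would cross-check the formula by computing small examples (e.g.\ $n=5$, $k=2$, $a=3$) against a direct decomposition of $\qhook{n}{k}\downarrow_{\mathfrak{S}_a\times\mathfrak{S}_b}$ via character tables.
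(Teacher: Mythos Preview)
Your proposal is correct and follows essentially the same approach as the paper: reduce to the four cases $(\alpha,\beta)\in\{\text{hook},\text{quasihook}\}^2$, handle the quasihook--hook case via the observation that the skew shape coincides with a shifted hook skew shape, invoke LR symmetry for the transposed case, note the quasihook--quasihook case is empty, and then enumerate the hook--hook LR-tableaux separately for $i\geq 1$ (four tableaux, with the $\epsilon=0$ content repeated) and $i=0$ (two tableaux), absorbing the boundary discrepancy as the corrective term. This is exactly the paper's argument, down to the same case split and the same figures.
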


\begin{figure}[h]
	\begin{center}
		\includegraphics[width=0.7\linewidth]{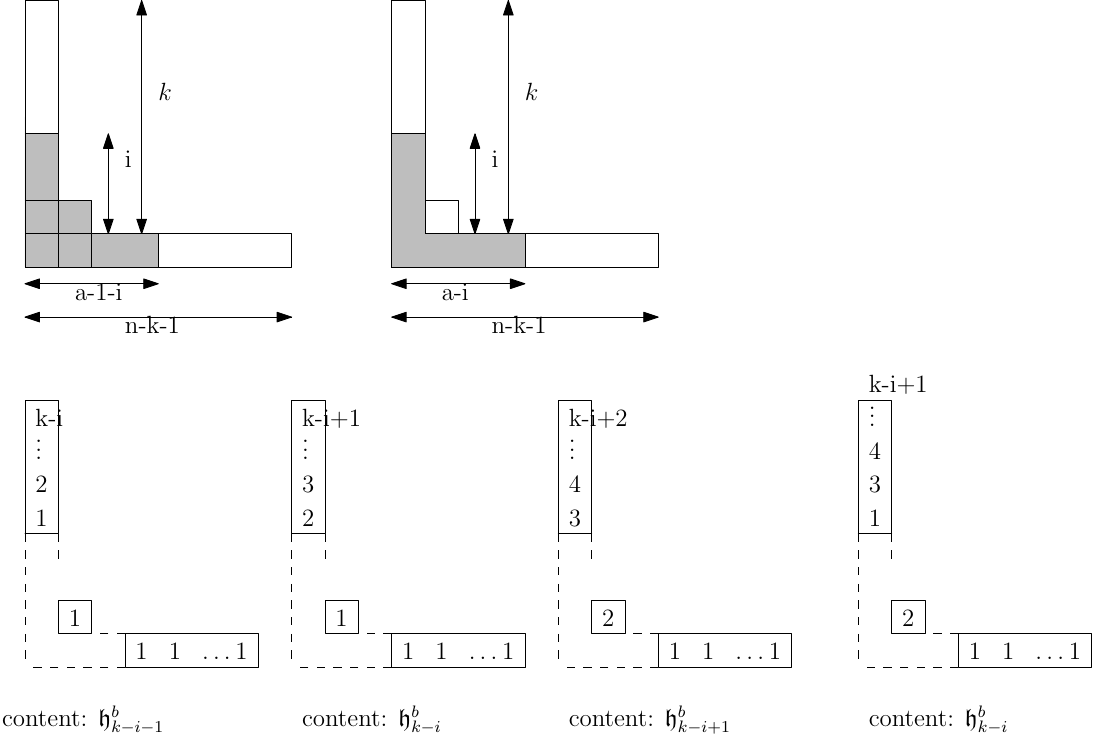}
		\caption{Application of the LR rule to compute the restriction of the quasihook $\qhook{n}{k}$ to $\mathfrak{S}_a \times \mathfrak{S}_{b}$. Top-Left: when $\alpha$ is a quasi-hook, we obtain the same skew shape as on Figure~\ref{fig:LRhooks}. Top-Right and Bottom: when $\alpha$ is a hook, there are at most four LR-tableaux such that $\beta$ is also a hook.
		%The third and fourth cases do not exist when $n-k-1-a+i=0$.
		}\label{fig:LRquasihooks}
	\end{center}
\end{figure}

\begin{figure}[h]
	\begin{center}
		\includegraphics[width=0.7\linewidth]{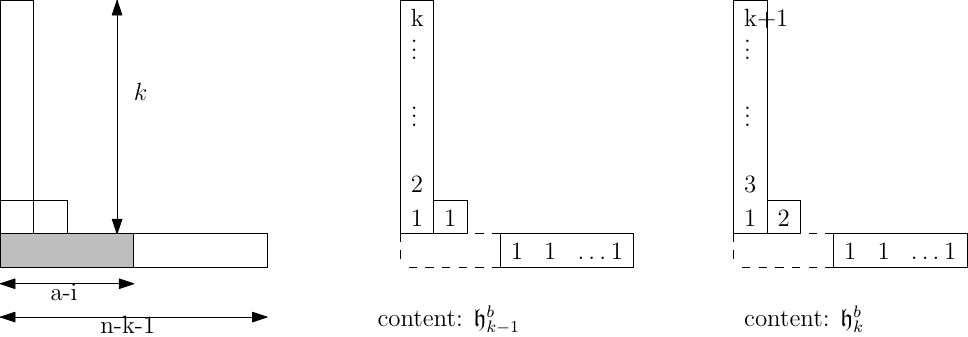}
		\caption{Application of the LR rule to compute the restriction of the quasihook $\qhook{n}{k}$ to $\mathfrak{S}_a \times \mathfrak{S}_{b}$, in the special case when $\alpha$ is the hook $\hook{a}{0}$. The picture displays the LR-tableaux such that $\beta$ is also a hook.		}\label{fig:LRquasihooks2}
	\end{center}
\end{figure}

\newpage
\bibliographystyle{alpha}
\bibliography{biblio}

\Address

\end{document}